\definecolor{darkred}{rgb}{.7,0,0}
\definecolor{green}{rgb}{0,0.7,0}
\newcommand{\nn}{\nonumber}
\newtheoremstyle{thmm}{1.5ex plus 1ex minus .2ex}{1.5ex plus 1ex minus.2ex}{\rmfamily}{}{\bfseries}{}{1em}{} \theoremstyle{thmm}
\newtheorem{theorem}{Theorem}[section]
\newtheorem{lemma}{Lemma}[section]
\newtheorem{remark}{Remark}[section]
\renewenvironment{proof}[1][Proof]{\noindent\textit{#1. }
}{\hfill$\square$}
\newcommand{\vertiii}[1]
{{\left\vert\kern-0.25ex\left
\vert\kern-0.25ex\left\vert #1
    \right\vert\kern-0.25ex\right
\vert\kern-0.25ex\right\vert}}
\def\d{{\rm d}}
\def\R{\mathbb{R}}
\def\N{\mathbb{N}}
\def\mat{\partial_t^{\bullet}}
\def\v{{\bf v}}
\def\d{{\mathrm d}}
\def\R{{\mathbb R}}
\def\rg{{H}}
\def\Re{\text{\rm Re}}
\def\ls{\Gamma_{h}} 
\def\mdt{\partial_t^\bullet} 
\def\mdth{\partial_{h,t}^\bullet} 
\def\Gaht{\Gamma_h(t)}
\def\Gahs{\Gamma_h(s)}
\def\Gat{\Gamma(t)}
\def\Gah{\Gamma_h}
\def\ll{\|\hspace{-1pt}|}
\newcommand{\dimsurf}{m}
\newcommand{\vphi}{\varphi}
\newcommand\blfootnote[1]{%
  \begingroup
  \renewcommand\thefootnote{}\footnote{#1}%
  \addtocounter{footnote}{-1}%
  \endgroup
}
\title{\Large\bf Maximal regularity of evolving FEMs \\ 
for parabolic equations on an evolving surface}
\author{\normalsize 
Genming Bai$^\dagger$
\and
Bal\'azs Kov\'acs \thanks{Institute of Mathematics, Paderborn University, Warburgerstr.~100, 33098 Paderborn, Germany, Email address: balazs.kovacs@math.upb.de}
\and
Buyang Li\thanks{Department of Applied Mathematics, The Hong Kong Polytechnic University, Hung Hom, Kowloon, Hong Kong. 
Email address: genming.bai@connect.polyu.hk, buyang.li@polyu.edu.hk }
}
\date{}
\begin{document}

\maketitle

\vspace{-10pt}

\begin{abstract}
In this paper, we prove that spatially semi-discrete evolving finite element method for parabolic equations on a given evolving hypersurface of arbitrary dimensions preserves the maximal $L^p$-regularity at the discrete level. We first establish the results on a stationary surface and then extend them, via a perturbation argument, to the case where the underlying surface is evolving under a prescribed velocity field. The proof combines techniques in evolving finite element method, properties of Green's functions on (discretised) closed surfaces, and local energy estimates for finite element methods. 
\\ 

\noindent{\bf Keywords:}$\,\,\,$ 
parabolic equation, evolving surface, evolving FEM, maximal $L^p$-regularity, local energy estimates.
\end{abstract}

\blfootnote{This work is partially supported by a Hong Kong RGC grant (project no. 15300519). }

\vspace{-20pt}

\setlength\abovedisplayskip{4pt}
\setlength\belowdisplayskip{4pt}

\tableofcontents

\section{Introduction}
\label{section:introduction}
\setcounter{equation}{0}

\textbf{Overview of maximal regularity results in flat domains.}
Let $\varOmega\subset\R^d$ be a flat, polyhedral of dimensions $d\in\{2,3\}$ or smooth domain of arbitrary dimensions, and consider the initial and boundary value problem for a linear parabolic partial differential equation (PDE)
\begin{align} 
\label{PDE1} 
\left\{\begin{array}{ll}
\displaystyle
\frac{\partial u(t,x)}{\partial t}- \sum_{i,j=1}^d\frac{\partial}{\partial x_i}\bigg(a_{ij}(x)\frac{\partial u(t,x)}{\partial x_j}\bigg) = f(t,x)
&\mbox{for}\,\,(t,x)\in \R_+\times \varOmega,\\[10pt]
u(t,x)=0 
&\mbox{for}\,\,(t,x)\in \R_+\times\partial\varOmega ,\\[7pt]
u(0,x)=u_0(x)
&\mbox{for}\,\,x\in \varOmega ,
\end{array}\right. 
\end{align} 
where $a_{ij}=a_{ji}$ are real-valued bounded measurable functions satisfying the following uniform ellipticity condition for some constant $\lambda>0$:
\begin{equation} \label{Ellipticitiy}
\lambda^{-1}|\xi|^2 
\le \mbox{$\sum_{i,j=1}^d$} a_{ij}(x) \xi_i\xi_j 
\le \lambda|\xi|^2,\quad\forall\, \xi=(\xi_1,\dotsc,\xi_d)\in\R^d,\,\,\,\forall\, x\in\varOmega .
\end{equation} 
Under condition \eqref{Ellipticitiy}, the elliptic partial differential operator $A=\sum_{i,j=1}^d\frac{\partial}{\partial x_i}\big(a_{ij}(x)\frac{\partial }{\partial x_j}\big)$ generates a bounded analytic semigroup on $L^q(\varOmega)$, $1<q<\infty$, and the solution of \eqref{PDE1} possesses the following maximal $L^p$-regularity in $L^q(\varOmega)$: 
\begin{align}\label{MaxLpReg}
\hspace{-5pt}
\|\partial_tu\|_{L^p(\R_+;L^q(\varOmega))}
+\|A u\|_{L^p(\R_+;L^q(\varOmega))}\leq C
\|f\|_{L^p(\R_+;L^q(\varOmega))}, 
\quad \mbox{if}\,\,\, u_0=0 ,\,\, 1<p,q<\infty ,
\end{align} 
which is an important mathematical tool in studying the well-posedness and regularity of solutions of nonlinear parabolic PDEs; see \cite{Amann1995,ClementPruss1992,LiYang2015,KW04,Lunardi12,Pruss01}.

Analogously, denoting by $A_h$ the finite element approximation of the elliptic operator $A$ on a finite element subspace $S_h\subset H^1_0(\varOmega)$, i.e.,  
\begin{align}\label{Discrete-Laplacian}
(A_h\phi_h,\varphi_h)=-(a_{ij}\nabla \phi_h,\nabla\varphi_h),\quad \forall\, \phi_h, \varphi_h\in S_h, 
\end{align} 
it is known that the semi-discrete finite element solutions given by  
\begin{align}\label{FEMEq0}
\left\{\begin{array}{ll}
(\partial_t u_h,v_h)+\sum_{i,j=1}^d (a_{ij} \partial_ju_h ,\partial_iv_h)=(f,v_h),
&\forall\, v_h\in S_h ,\,\,
\forall\, t\in(0,T),\\[5pt]
u_h(0)=u_{h,0} ,
\end{array}\right.
\end{align} 
satisfies the following spatially discrete $h$-uniform maximal $L^p$-regularity  \cite{Li2015,LiSun2017-MCOM} (with a constant $C>0$ independent of the mesh size $h$):   
\begin{align}
&\|\partial_tu_h\|_{L^p(\R_+;L^q(\varOmega))}
+\|A_hu_h\|_{L^p(\R_+;L^q(\varOmega))}\leq C 
\|f\|_{L^p(\R_+;L^q(\varOmega))} ,   
&&\mbox{if}\,\,\, u_{h,0}=0,\,\,\,
1<p,q<\infty,
\label{MaxLpReg-FEM}
\end{align} 
which has applications in numerical analysis for semilinear parabolic equations with strong nonlinearities \cite{Geissert2007}, and quasi-linear parabolic equations with nonsmooth coefficients \cite{LiSun2015-regularity}.  The spatially discrete maximal $L^p$-regularity results were firstly proved in smooth domains with Neumann boundary condition \cite{Geissert2006,Li2015}, and then extended to polyhedral domains \cite{LiSun2017-MCOM,Li-MCOM-2019} with the Dirichlet boundary condition. The discrete maximal $L^p$-regularity is also closely related (in the techniques of proof) to the maximum-norm stability of finite element solutions of parabolic equations \cite{Leykekhman2004,NitscheWheeler1982,SchatzThomeeWahlbin1980,SchatzThomeeWahlbin1998,ThomeeWahlbin2000}:
\begin{align}\label{maximum-norm}
\|u-u_h\|_{L^\infty(0,T;L^\infty(\Omega))}
\le C\ln(2+1/h)\inf_{\chi_h \in L^\infty(0,T;S_h)}\|u-\chi_h\|_{L^\infty(0,T;L^\infty(\Omega))}.
\end{align} 

The extension of maximal $L^p$-regularity to the time-discrete setting was established for different time discretization methods, including the backward Euler method \cite{AshyralyevPiskarevWeis2002}, discontinuous Galerkin method \cite{leykekhman2017discrete}, $\theta$-schemes \cite{Kem18}, and A-stable multistep and Runge-Kutta methods \cite{KLL16}. All these methods are A-stable. The maximal $L^p$-regularity of A$(\alpha)$-stable backward differentiation formulae (BDF) was established in \cite{Li-IMA-2022}. 
The discrete maximal $L^p$-regularity helps us to control the nonlinearlity as well (see \cite{ALL17,KLL18_focm,KL23}), and besides it enables us to obtain optimal-order $L^p$-norm error estimates without using the inverse inequality (cf.~\cite{KLL16,Li-MCOM-2019,KL23}).

\textbf{Overview of maximal regularity results on surfaces.} 
The maximal $L^p$-regularity of parabolic equations on an \emph{evolving surface} $\Gamma \subset \R^{m+1}$, $m\in \N$, as well as the maximal $L^p$-regularity of time discretizations on an evolving surface and its application to the convergence analysis of BDF methods for nonlinear PDEs on an evolving surface, was discussed in \cite{KL23}. 

\medskip
\textbf{Semi-discrete maximal regularity results on evolving surfaces.} 
However, since the maximal $L^p$-regularity of spatial discretizations for parabolic equations on an evolving surface is still missing, only semi-discretization in time were considered in \cite{KL23}. The aim of this article is to fill in this gap, by establishing spatially discrete maximal $L^p$-regularity of isoparametric finite element methods (FEMs) for parabolic equations on an evolving surface which is approximated by quasi-uniform curved triangles.

In order to prove the discrete maximal $L^p$-regularity for the spatially semi-discrete problems, we combine the techniques developed for evolving surface FEMs and local energy estimates. Firstly, we shall prove the discrete maximal $L^p$-regularity for spatially semi-discrete FEM in \eqref{eq:scheme} on a stationary surface (Theorem \ref{thm:disc_max_reg2}). Then we use a temporal perturbation argument to extend this result to evolving surfaces (Theorem \ref{thm:disc_max_reg}). The discrete maximal $L^p$-regularity results for \eqref{eq:scheme-2} can be obtained analogously by a perturbation argument for the lower-order advection term.

Since we are considering a spatially semi-discrete setting, the underlying smooth surface $\Gamma(t)$ should also be replaced by the finite element surface $\Gamma_h(t)$. The discrepancy between $\Gamma(t)$ and $\Gamma_h(t)$ is the main obstacle in the proof, leading to the following technical difficulties to be addressed: 
\begin{itemize}
	\item The discrete delta functions on $\Gamma(t)$ and $\Gamma_h(t)$ are not simply related by the lift via the distance projection. Indeed, they are correlated via a nonlinear way which stems from the nonlinear relation of $L^2$ projections $P_h(\Gamma(t))$ and $P_h(\Gamma_h(t))$. Therefore, it is necessary for us to obtain the high-order consistency between the discrete delta functions on different surfaces (Lemma \ref{lemma:delta}) in order to ensure the consistency of the corresponding Green's functions, which are indispensable in the used local energy estimate (Lemma~\ref{LocEEst}) and dyadic decomposition argument (Lemma~\ref{LemGm2}).
	\item The discrepancy of $\Gamma(t)$ and $\Gamma_h(t)$ will introduce a bunch of additional geometric perturbation terms in the local energy estimate (Lemma \ref{LocEEst}) and in the dyadic decomposition argument (Lemma \ref{LemGm2}). We need to treat them carefully to make sure that the leading order stability is still available.
	\item In the temporal perturbation argument, we need to develop the norm equivalence of the discrete Laplacian (Lemma \ref{lemma:delta_h} and Remark \ref{rmk:delta_h}). The super-approximation property (cf.~(P3) in Section \ref{sec:hypo}) also plays an important role in the derivation of this equivalence. Besides, as a nature of parametric finite elements, the matrix-valued coefficient $B_h(t,x)$ of the following change of variable 
	\begin{align}
		\int_{\Gaht} \nabla_{\Gaht} u^{-\ell} \nabla_{\Gaht} v^{-\ell}  = \int_{\Gat} B_h(t,\cdot) \nabla_{\Gat} u \nabla_{\Gat} v \notag
	\end{align}
	always has jumps at the edges and thus is discontinuous. To this end, it is desirable as well to construct a globally continuous substitute $\tilde B(t,x)$ according to the definition of the discrete Laplacian.
	\item The norm equivalence of the discrete Laplacian will bring in a lower-order term $\| \nabla u \|_{L^p}$. To control this term by the maximal $L^p$-regularity, we need to use the discrete interpolation inequality (Lemma \ref{lemma:interp}) whose proof greatly relies on the $W^{1,p}$-stability of the Ritz projection. The latter is a consequence of the Green's function estimate on closed manifold ({\cite[Theorem~3.2]{Dem09}}).
\end{itemize}

The article is organized as follows:
In Section~\ref{section:ESFEM}, we introduce the basic notations for evolving surface FEMs, the semi-discrete evolving surface FEMs for parabolic equations on an evolving surface, and the main theoretical results about discrete maximal $L^p$-regularity of evolving surface FEMs. 
In Section~\ref{section:stationary surfaces}, we develop the preliminary results of geometric perturbation estimates, Green's function estimates and local energy estimates on the stationary surface. We will prove the maximal regularity on stationary surface (Theorem \ref{thm:disc_max_reg2}) and on evolving surface via a temporal perturbation argument (Theorem \ref{thm:disc_max_reg}) in Section\ref{section:maximal regularity on stationary surfaces} and Section~\ref{section:perturbation arguments for evolving surfaces}, respectively.
In Appendix~\ref{appendix:A} we present the detailed proof of local energy estimates (Lemma \ref{LocEEst}), which is a technical lemma used in the proof of the main theorems.

\section{Evolving surface FEMs and main results}
\label{section:ESFEM}
\setcounter{equation}{0}

We now introduce the problem to be considered in this paper and briefly recall the standard parametric evolving surface FEM, see \cite{DziukElliott_ESFEM,Dem09,Dziuk2013b,highorderESFEM}.

\subsection{Parabolic equations on an evolving surface}
We assume that the evolution of a closed hypersurface $\Gamma(t) \subset \R^{\dimsurf + 1}$ of arbitrary dimensions is given by a diffeomorphic flow map $X(t,\cdot) \colon \Gamma^0 \to \Gamma(t)$, where $m \in \N$, and $\Gamma^0$ is a smooth $\dimsurf$-dimensional initial hypersurface, with $X(0,\cdot)$ being the identity map on $\Gamma^0$. We assume that $X(t,y)$ is smooth with respect to $(t,y)\in [0,T]\times \Gamma^0$ and the inverse function $X^{-1}(t,x)$ is smooth with respect to $x\in\Gamma(t)$ uniformly for $t\in[0,T]$. 

The material velocity (which is simply called velocity below) and material derivative on the surface are respectively given, for $x=X(y,t) \in \Gamma(t)$ with $y\in\Gamma^0$, by 
\begin{align}
	\label{eq:ODE for positions}
	v(t,x) =&\ \partial_t X(t,y) , \\
	\intertext{and} 
	\nonumber
	\mat u(t,x)= &\ \frac{\d}{\d t} u(t,X(t,y)) .
\end{align} 
Let $\nu$ be the unit outward normal vector to the surface $\Gamma(t)$. We denote by $\nabla_{\Gamma(t)}u$ the tangential gradient of the function $u$, and denote by $\Delta_{\Gamma(t)}u = \nabla_{\Gamma(t)} \cdot \nabla_{\Gamma(t)} u$ the Laplace--Beltrami operator acting on $u$.
For more details on all these basic concepts we refer to \cite{DziukElliott_ESFEM,DeckelnickDziukElliott_acta,Dziuk2013b}, and the references therein. 
An unified abstract theory for evolving mesh FEMs for partial differential equations on evolving domains can be found in \cite{ElliottRanner_unified}.

We consider the following two types of linear parabolic equations on the closed surface $\Gamma(t)$.

(i) A parabolic model problem on a given evolving surface:
\begin{align}
\label{eq:PDE 1}
	\left\{
	\begin{aligned}
		&\mdt u - \Delta_{\Gamma(t)} u = f
		& \qquad &\mbox{on}\,\,\,\Gamma(t) ,\,\,\,\forall\,t\in (0,T),\\
		&u(0,\cdot)= u^0 &&\mbox{on}\,\,\,\Gamma^0 ,
	\end{aligned}
	\right.	
\end{align}
where $\mdt$ is the material derivative associated to the flow velocity $v$. The above model problem arises in the analysis and numerical analysis of mean curvature flow, see \cite{Huisken1984} and \cite{KLL19}, respectively, wherein the following evolution equations of normal vector $\nu$ and mean curvature $H$ play an important role: 
$$
\mdt \nu - \Delta_{\Gamma(t)} \nu = |\nabla_{\Gamma(t)} \nu|^2 \nu
\quad\mbox{and}\quad 
\mdt H - \Delta_{\Gamma(t)} H = |\nabla_{\Gamma(t)} \nu|^2 H . 
$$ 

(ii) The heat equation on a given evolving surface (see, e.g., \cite[equation~(1.1)]{DziukElliott_ESFEM}):
\begin{align}
\label{eq:PDE 2}
	\left\{
	\begin{aligned}
		&\mdt u + u \nabla_{\Gamma(t)} \cdot v - \Delta_{\Gamma(t)} u = f
		& \qquad &\mbox{on}\,\,\,\Gamma(t) ,\,\,\,\forall\,t\in (0,T),\\
		&u(0,\cdot)= u^0 &&\mbox{on}\,\,\,\Gamma^0 .
	\end{aligned}
	\right.	
\end{align}

The regularity of solutions to PDEs on an evolving surface is often characterized by the following Sobolev spaces on a space-time manifold $\mathcal G_T =\bigcup_{t\in(0,T)}(\{t\}\times\Gamma(t))$: 
\begin{align}
	L_t^p(0,T; W^{k,q}(\Gamma(t)))
	&=
	\bigg\{
	w\colon  \mathcal G_T \rightarrow \R : w(t,\cdot) \in W^{k,q}(\Gamma(t)) \mbox{ a.e. } t\in (0,T) , \notag\\
	&\hspace{121pt} t\rightarrow \| w(t,\cdot) \|_{W^{k,q}(\Gamma(t))} \in L^p(0,T) 
	\bigg\} , \\[5pt] 
	W_t^{1,p}(0,T; W^{k,q}(\Gamma(t)))
	&=
	\bigg\{
	w\in L_t^p(0,T; W^{k,q}(\Gamma(t))) : \mdt w \in L_t^p(0,T; W^{k,q}(\Gamma(t)))
	\bigg\} .
\end{align}
The conventional notational convention $H^k(\Gamma(t)) = W^{2,k}(\Gamma(t))$ will also be used. For more details on these spaces we refer to \cite{AlphonseElliottStinner_2015,AlphonseCaetanoDjurdjevacElliottS_2023} (employing a different notation). 

The weak formulation of \eqref{eq:PDE 1} reads as follows. Find $u\in H^1(\mathcal{G}_T) \cap L_t^2(0,T; H^{1}(\Gamma(t)))$ satisfying relation 
\begin{align}
\label{eq:PDE 1 - weak}  
	\int_{\Gamma(t)} \mdt u \, \varphi + \int_{\Gamma(t)} \nabla_{\Gamma(t)} u\cdot \nabla_{\Gamma(t)} \varphi
	= 
	\int_{\Gamma(t)} f \varphi 
\end{align}
for all $\varphi\in H^1(\mathcal{G}_T)$ and almost all $t \in (0,T)$. 

The weak formulation of \eqref{eq:PDE 2} reads as follows (see, e.g., \cite[equation~(1.2)]{DziukElliott_ESFEM}): Find $u\in H^1(\mathcal{G}_T) \cap L_t^2(0,T; H^{1}(\Gamma(t)))$ satisfying relation 
\begin{equation}
\label{eq:PDE 2 - weak}
	\frac{\d}{\d t} \int_{\Gamma(t)} u \, \varphi + \int_{\Gamma(t)} \nabla_{\Gamma(t)} u\cdot \nabla_{\Gamma(t)} \varphi
	= 
	\int_{\Gamma(t)} f \varphi 
\end{equation}
for almost all $t \in (0,T)$ and all $\varphi\in H^1(\mathcal{G}_T)$ with $\mat \vphi = 0$.

\subsection{Evolving surface finite elements}
Let $\Gamma_h(t)$ be the closed and continuous piecewise polynomial surface (of degree $k$) which approximates the smooth surface $\Gamma(t)$ evolving under the prescribed velocity $v$. Additionally, we require that the nodes of $\Gamma_h(t)$ stay on $\Gamma(t)$ and move with the same velocity as $\Gamma(t)$. In other words, the evolution of $\Gamma_h(t)$ is uniquely determined with velocity $v_h = I_h v$. 

Each polynomial element $K$ of $\Gamma_h(t)$ is the image of an element $K^0\subset\Gamma_h(0)$ under the discrete flow map. We denote by $K_{\rm f}^0$ the unique flat element which has the same endpoints as $K^0$, and denote by $F_{K} \colon K_{\rm f}^0\rightarrow K$ the parametrization of $K$, i.e., $F_{K}$ is the unique polynomial of degree $k$ that maps $K_{\rm f}^0$ onto $K$. For a more detailed description of high-order surface triangulations we refer to \cite{Dem09,highorderESFEM}.
The finite element space of degree $k$ on the discrete surface $\Gamma_h(t)$ is defined as 
\begin{align*}
	S_h(\Gamma_h(t)) : = &\ \big\{ \vphi_h\in C(\Gamma_h(t)): \vphi_h\circ F_K \in \mathbb{P}^k(K_{\rm f}^0) \,\,\mbox{for every element}\,\, K\subset\Gamma_h(t) \big\} \\
	= &\ \textnormal{span} \big\{ \phi_1, \dotsc, \phi_N \big\},
\end{align*}
where $\mathbb{P}^k(K_{\rm f}^0)$ denotes the space of polynomials of degree $k \ge 1$ on the flat element $K_{\rm f}^0$, and where $\phi_j$ denote the standard nodal basis functions of $S_h(\Gamma_h(t))$. By construction $S_h(\Gamma_h(t))$ is an isoparametric finite element space.

We denote by $\mdth$ the discrete material derivative associate to the discrete flow $v_h$. The basis functions $\phi_j$ satisfy the following transport property (see \cite[Proposition~5.4]{DziukElliott_ESFEM}):
\begin{equation}
\label{eq:transport property}
	\mdth \phi_j = 0 \qquad \text{for} \quad j=1,\dotsc,N.
\end{equation}


Let $\delta>0$ be a fixed sufficiently small constant such that every point $x$ in the $\delta$-neighbourhood of surface $\Gamma(t)$, denoted by $D_\delta(\Gamma(t))=\{x\in\R^{\dimsurf+1}: {\rm dist}(x,\Gamma(t))\le \delta\}$, 
has a unique distance projection onto $\Gamma(t)$, denoted by $q(t,x)$, satisfying the following relation: 
\begin{align*}
	q(t,x) - x = |x-q(t,x)| \nu(t,q(t,x)) ,
\end{align*}
where $\nu(t,\cdot)$ is the outward unit normal vector to $\Gamma(t)$. It is known that such a constant $\delta$ exists and only depends on the curvature of $\Gamma(t)$ (thus $\delta$ is independent of $t\in[0,T]$, but possibly dependent on $T$). We will use the notation $q^{-1}(t,\cdot)$ to denote the inverse of the bijective map $q(t,\cdot)|_{\Gaht} \colon \Gaht \rightarrow \Gat$. 

We assume that each element $K^0\subset\Gamma_h(0)$ interpolates the smooth initial surface $\Gamma(0)$ and that the parametrization $F_{K^0} \colon K_{\rm f}^0\rightarrow K^0$ is a polynomial of degree at most $k$ with the following property: 
\begin{align}\label{P0}
	& \max_{K^0\subset\Gamma_h(0)} 
	\Big(\|F_{K^0}\|_{W^{k,\infty}(K_{\rm f}^0)} + \|\nabla_{K^0} F_{K^0}^{-1}\|_{L^\infty(K^0)} \Big) 
	\le \kappa_0 ,
\end{align}
where $\kappa_0$ is some constant that is independent of $h$. This property holds for standard parametric finite elements which interpolate the smooth surface $\Gamma(0)$ and guarantees the following optimal-order approximation to $\Gamma(0)$ by $\Gamma_h(0)$ (see \cite{Dem09,Dziuk2013b}): 
\begin{align}
	& \max_{K^0\subset\Gamma_h(0)}  \| q(0) \circ F_{K^0} - F_{K^0} \|_{L^{\infty}(K_{\rm f}^0)} 
	\le Ch^{k+1} .
\end{align}
The projection $q(0,x)$ is well defined for points $x$ in a neighbourhood of $\Gamma(0)$ and therefore well defined on $\Gamma_h(0)$ for sufficiently small mesh size $h$. Since the discrete flow $v_h = I_h v$ is piecewise smooth, at any time $t\in[0,T]$ the optimal-order approximation to $\Gamma(t)$ by $\Gamma_h(t)$ is also available
\begin{align}
	& \max_{K\subset\Gamma_h(t)}  \| q(t) \circ F_{K} - F_{K} \|_{L^{\infty}(K_{\rm f}^0)} 
	\le Ch^{k+1} .
\end{align}


\subsection{Lift}

The \emph{lift} of a finite element function $w_h\in S_h(\Gamma_h(t))$ onto the smooth surface $\Gamma(t)$ is defined as 
$$
 w_h^\ell = w_h\circ (q(t) |_{\Gamma_h(t)})^{-1} ,
$$
see \cite[Section 2.4]{Dem09} and \cite[Section 3.4]{KLL19}. 
The \emph{inverse lift} of a function $f(t,\cdot) \colon \Gamma(t) \to \R$ is denoted by $f^{-\ell}(t,\cdot) \colon \Gamma_h(t) \to \R$, which is the function satisfying that $(f^{-\ell})^\ell = f$.
By lifting the triangulation from $\Gamma_h(t)$ to $\Gamma(t)$, we obtain a triangulation and finite element space on the smooth surface $\Gamma(t)$, i.e., 
$$
	S_h(\Gamma(t)) = \big\{ w_h\in C(\Gamma(t)): w_h\circ q(t) \circ F_K \in \mathbb{P}^k(K_{\rm f}^0) \,\,\mbox{for every element}\,\, K\subset\Gamma_h(t) \big\} .
$$

For the ease of notations, if the specified domain is $\Gamma(t)$ we will also identify $v_h$ as its lift on $\Gamma(t)$. Namely, $v_h^\ell \in S_h(\Gamma(t))$ and $w_h\in S_h(\Gamma_h(t))$ simply mean that $v_h^{-\ell}\in S_h(\Gamma_h(t))$ and $w_h^\ell\in S_h(\Gamma(t))$. 
We will use the explicit superscript $^\ell$ to denote the lift where we want to emphasize its underlying domain.

\subsection{The semi-discrete problems}

We will now formulate the evolving surface finite element semi-discretisations of the two weak formulations in \eqref{eq:PDE 1 - weak} and \eqref{eq:PDE 2 - weak}.

The semi-discretisation of \eqref{eq:PDE 1 - weak} reads: Find $u_h\in C^1_t([0,T];S_h(\Gamma_h(t)))$ such that 
\begin{align}\label{eq:scheme}
	\left\{
	\begin{aligned}
		&(\partial_t u_h(t),\varphi_h)_{\Gaht} + (\nabla_{\Gaht} u_h(t),\nabla_{\Gaht} \varphi_h)_{\Gaht} 
		=
		(f^{-\ell}(t),  \varphi_h)_{\Gaht}
		&&\forall\,t\in(0,T] , \\ 
		&u_h(0)= u_{h,0} &&\mbox{on} \quad \Gamma_h^0 ,
	\end{aligned}
	\right.	
\end{align}
for all $\varphi_h\in S_h(\Gamma_h(t))$,  where $u_h^0 = I_h u(0)$, and $\partial_t u_h = \mdth u_h$ should be interpreted in the nodal sense. Equivalently, \eqref{eq:scheme} can be written into the following strong form
\begin{align}\label{eq:scheme_strong}
	\left\{
	\begin{aligned}
		&\partial_t u_h(t) - \Delta_{\Gaht, h} u_h(t)
		=
		f_h(t)
		&&\forall\,t\in(0,T], \\ 
		&u_h(0)= u_{h,0} &&\mbox{on} \quad \Gamma_h^0 ,
	\end{aligned}
	\right.	
\end{align}
where $f_h(t,x) = P_h(\Gaht) f^{-\ell}(t,x)$ and $P_h(\Gaht) \colon L^2(\Gaht) \rightarrow S_h(\Gaht)$ is the $L^2$ projection on surface $\Gaht$.

Similarly, the semi-discretisation of \eqref{eq:PDE 2 - weak} reads: Find $u_h\in C^1_t([0,T];S_h(\Gamma_h(t)))$ such that 
\begin{align}\label{eq:scheme-2}
	\left\{
	\begin{aligned}
		&\frac{\d}{\d t} (u_h(t),\varphi_h)_{\Gaht} + (\nabla_{\Gaht} u_h(t),\nabla_{\Gaht} \varphi_h)_{\Gaht} 
		=
		(f^{-\ell}(t),  \varphi_h)_{\Gaht}
		&&\forall\,t\in(0,T] , \\ 
		&u_h(0)= u_{h,0} &&\mbox{on} \quad \Gamma_h^0 ,
	\end{aligned}
	\right.	
\end{align}
for all $\varphi_h \in C^1_t([0,T];S_h(\Gamma_h(t)))$ such that $\partial_{h,t}^\bullet \vphi_h = 0$.  

\subsection{Main results: Spatially discrete maximal regularity}
\label{section:main results}


Following \cite{KL23}, we recall that $u \in H^1(\mathcal G_T)$ is a solution of \eqref{eq:PDE 1} if and only if the function $U(t,y) := u(t,X(t,y)) $, where $X\colon \Gamma(0)\rightarrow\Gamma(t)$ is the flow map, defines a solution $U \in H^1(\Gamma(0) \times (0, T))$ of the following weak formulation: 
\begin{align}\label{eq:PDE_pb}
	\int_{\Gamma(0)} a(t,y) \partial_t U(t,y) \psi(y)
	+
	\int_{\Gamma(0)} B(t,y) \nabla_{\Gamma(0)} U(t,y) \cdot \nabla_{\Gamma(0)} \psi(y)
	=
	\int_{\Gamma(0)} a(t,y) F(t,y) \psi(y) ,
\end{align}
for all $\psi \in H^1(\Gamma(0))$ and almost all $t \in [0, T]$, with $F(t,y) := f (X(t,y), t)$ and $U(0,\cdot) = u_0$.

Since the Riemannian metric on the evolving surface is positive definite (see \cite[Appendix]{KL23}), it follows that the functions $a(t,y)$ and $B(t,y)$ satisfy the following estimates:
\begin{align}
	&C^{-1} \leq a(t,y) \leq C, &&\quad\forall y\in \Gamma(0), \forall t\in [0,T], \\
	&C^{-1} |\xi_y|^2 \leq B(t,y)\xi_y\cdot\xi_y \leq C |\xi_y|^2, &&\quad\forall y\in \Gamma(0), \forall \xi_y\in T\Gamma(0)_y, \forall t\in [0,T] ,
\end{align}
where $C$ is some positive constant (depending only on the given flow map).

The continuous maximal regularity result for the solution of \eqref{eq:PDE_pb} (cf.~\cite[Theorem 3.1, 3.2]{KL23}, \cite[p. 211]{Weis19} and \cite{Pruss01}) is summarized below.
\begin{theorem}[Maximal regularity of continuous PDE, {\cite[Theorem~3.1]{KL23}}]\label{thm:max_reg}
	{\it
		{\it If $u_0 = 0$, then the solution $U$ of
			the pulled-back PDE \eqref{eq:PDE_pb} obeys the following estimate for $p,q\in(1,\infty)$:
			\begin{align}
				\| \partial_t U \|_{L^p(0,T; L^q(\Gamma^0))}
				+
				\|  U \|_{L^p(0,T; W^{2,q}(\Gamma^0))}
				\leq
				C
				\| F \|_{L^p(0,T; L^q(\Gamma^0))}
			\end{align}
			where the constant $C>0$ only depends on $\mathcal G_T$.
		}
	}
\end{theorem}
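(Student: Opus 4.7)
The plan is to regard \eqref{eq:PDE_pb} as a non-autonomous parabolic Cauchy problem on the fixed compact Riemannian manifold $\Gamma^0$ and to invoke abstract maximal $L^p$-regularity theory (Weis, Pr\"uss). Dividing \eqref{eq:PDE_pb} by the uniformly positive factor $a(t,y)$ recasts it, in strong form, as
\begin{align*}
\partial_t U + \mathcal{A}(t) U = F \quad\text{on } \Gamma^0\times(0,T), \qquad U(0,\cdot)=0,
\end{align*}
where $\mathcal{A}(t)\psi := -a(t,y)^{-1}\mathrm{div}_{\Gamma^0}\bigl(B(t,y)\nabla_{\Gamma^0}\psi\bigr)$. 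Since the flow map $X$ is smooth in $(t,y)$, the coefficients $a$ and $B$ are smooth; in particular $t\mapsto \mathcal{A}(t)\in \mathcal{L}(W^{2,q}(\Gamma^0),L^q(\Gamma^0))$ is continuous, with ellipticity constants uniform in $t\in[0,T]$.

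First I would treat the autonomous case. For each frozen $t_0\in[0,T]$, the operator $\mathcal{A}(t_0)$ is a uniformly elliptic second-order divergence-form operator on the closed compact manifold $\Gamma^0$. It generates a bounded analytic semigroup on $L^q(\Gamma^0)$ for every $1<q<\infty$ and, using Gaussian heat-kernel bounds available on a closed manifold together with Calder\'on--Zygmund theory, admits a bounded $H^\infty$-calculus. Since $L^q(\Gamma^0)$ is a UMD space, this gives $R$-sectoriality and hence, by Weis' theorem (cf.~\cite[p.~211]{Weis19}), maximal $L^p$-regularity of the autonomous problem on any bounded interval, with a constant uniform in $t_0\in[0,T]$.

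Second, I would pass to the non-autonomous problem by a perturbation argument of Pr\"uss--Schnaubelt type (cf.~\cite{Pruss01,Lunardi12}): continuity of $t\mapsto \mathcal{A}(t)$ together with uniform $R$-sectoriality transfers maximal $L^p$-regularity from the frozen operators to the evolution system, yielding
\begin{align*}
\|\partial_t U\|_{L^p(0,T;L^q(\Gamma^0))} + \|\mathcal{A}(\cdot)U\|_{L^p(0,T;L^q(\Gamma^0))} \leq C\|F\|_{L^p(0,T;L^q(\Gamma^0))}.
\end{align*}
The uniform-in-$t$ elliptic regularity estimate $\|\psi\|_{W^{2,q}(\Gamma^0)}\leq C\bigl(\|\mathcal{A}(t)\psi\|_{L^q(\Gamma^0)}+\|\psi\|_{L^q(\Gamma^0)}\bigr)$, combined with the trivial bound $\|U(t)\|_{L^q}\leq \int_0^t\|\partial_s U\|_{L^q}\,\d s$ (since $U(0)=0$), then converts the $\mathcal{A}(\cdot)U$-estimate into the desired $L^p(0,T;W^{2,q}(\Gamma^0))$ bound.

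The main technical hurdle is establishing uniform $R$-sectoriality (equivalently, a bounded $H^\infty$-calculus with uniform constants) for $\mathcal{A}(t_0)$ on $L^q(\Gamma^0)$. On a closed compact Riemannian manifold this is by now standard and follows from Gaussian upper bounds for the heat kernel of $\mathcal{A}(t_0)$, uniformity in $t_0$ being inherited from the uniform ellipticity constants; once the coefficients have been shown to be smooth the argument thus reduces to invoking existing abstract results, and no analysis specific to $\Gamma^0$ beyond heat-kernel bounds is required.
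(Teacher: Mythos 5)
The theorem is quoted verbatim from \cite[Theorem~3.1]{KL23}, so the present paper has no proof of its own; however, the reference ``(cf.\ the proof of Theorem~3.1 and (4.22) in \cite{KL23})'' in Section~\ref{section:perturbation arguments for evolving surfaces}, together with the way the discrete version (Theorem~\ref{thm:disc_max_reg}) is proved there, makes the route in \cite{KL23} clear: freeze time at $s$, apply autonomous maximal regularity for the frozen operator, observe that the coefficient differences $a(s,\cdot)-a(t,\cdot)$ and $B(s,\cdot)-B(t,\cdot)$ are $O(|s-t|)$, and run a Gr\"onwall argument on the quantity $L_{p,q}(s)$. Your argument is correct but takes a genuinely different, more functional-analytic route for the perturbation step: you establish uniform $R$-sectoriality of the frozen operators $\mathcal A(t_0)$ on $L^q(\Gamma^0)$ via Gaussian heat-kernel bounds and a bounded $H^\infty$-calculus, and then invoke the Pr\"uss--Schnaubelt theorem for non-autonomous Cauchy problems with time-continuous coefficients. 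Both routes are valid for the finite interval $[0,T]$. The Gr\"onwall approach is more elementary and self-contained, and it is also the one this paper needs to replicate at the discrete level, where abstract $R$-sectoriality of $\Delta_{\Gamma_h(s),h}$ with $h$-uniform constants is exactly the hard content of Theorem~\ref{thm:disc_max_reg2} rather than something that can be quoted; your approach offloads the perturbation step to a known black box, which is cleaner at the PDE level but does not transfer directly to the FEM setting. One point you should make explicit: after dividing by $a$, the operator $\mathcal A(t)=-a^{-1}\mathrm{div}_{\Gamma^0}(B\nabla_{\Gamma^0}\cdot)$ is no longer in divergence form on the unweighted $L^q(\Gamma^0)$, but it is self-adjoint on $L^2$ with the weight $a\,\d y$, and since $a$ is bounded above and below the heat kernel still obeys Gaussian upper bounds (equivalently, absorb $a$ into the measure, or write $\mathcal A(t)=-\mathrm{div}(a^{-1}B\nabla\cdot)+\nabla(a^{-1})\cdot B\nabla\cdot$ and treat the first-order term as a relatively bounded perturbation); only then does the heat-kernel/$H^\infty$ pipeline apply as you claim.
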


The main results of this paper are the following two theorems, which concern discrete maximal regularity on discrete stationary surfaces and a discrete evolving surfaces, respectively. 

\begin{theorem}[Maximal regularity of semi-discrete surface FEMs on a stationary surface]
\label{thm:disc_max_reg2}
	{\it On a discrete stationary surface $\Gamma_h(s)$ (for any fixed $s\in [0,T]$), the finite element solution $u_h$ of weak formulation
		\begin{align}\label{eq:scheme_stat}
			\left\{
			\begin{aligned}
				&(\partial_t u_h(t),\varphi_h)_{\Gamma_h(s)} + (\nabla_{\Gamma_h(s)} u_h(t),\nabla_{\Gamma_h(s)} \varphi_h)_{\Gamma_h(s)} 
				=
				(f_h ,  \varphi)_{\Gamma_h(s)}
				&&\forall\, \varphi_h\in S_h(\Gamma_h(s)), \\ 
				&u_h(0)= 0 &&\mbox{on}\,\,\,\Gamma_h(s) ,
			\end{aligned}
			\right.	
		\end{align}
		satisfies the following estimate for any given $p,q\in(1,\infty)$, and $h \leq h_0${\rm:}
		\begin{align}
			\| \partial_t u_h \|_{L^p(\R_+; L^q(\Gamma_h(s)))}
			+
			\| \Delta_{\Gamma_{h}(s),h} u_h \|_{L^p(\R_+; L^q(\Gamma_h(s)))}
			\leq
			C
			\| f_h \|_{L^p(\R_+; L^q(\Gamma_h(s)))}
		\end{align}
		where 
		the constant $C>0$ is independent of $h$ and $s$, but depends on the $\mathcal G_T$ and $T$.
	}
\end{theorem}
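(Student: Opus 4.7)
Because $s$ is frozen, $\mdth$ coincides with ordinary $\partial_t$ and the scheme is a Hilbert-space Cauchy problem $u_h'(t) = A_h u_h(t) + f_h(t)$, $u_h(0)=0$, with $A_h := \Delta_{\Gamma_h(s),h}$ self-adjoint and non-positive on $S_h(\Gamma_h(s))$. The plan is to establish the sectorial resolvent bound
\begin{align*}
\bigl\|(\lambda I - A_h)^{-1}\bigr\|_{L^q(\Gamma_h(s)) \to L^q(\Gamma_h(s))} \leq \frac{C}{|\lambda|},\qquad \lambda \in \Sigma_\theta,\ \theta\in(\pi/2,\pi),
\end{align*}
for every $q\in(1,\infty)$, with a constant $C$ independent of both $h$ and $s\in[0,T]$ (uniformity in $s$ is inherited once the geometric quantities entering the proof can be bounded in terms of $\mathcal{G}_T$ only). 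The case $q=2$ is trivial by spectral calculus, so the real work is the bound for $q\neq 2$. Combined with Gaussian upper bounds on the semigroup kernel $e^{tA_h}(x,y)$ and the fact that $L^q$ is a UMD space, this sectorial estimate yields the desired maximal $L^p$--$L^q$ regularity via Weis' operator-valued Fourier-multiplier theorem (alternatively by Calder\'on--Zygmund theory applied to the kernel), in the spirit of the flat-domain prototype of Li--Sun.

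\textbf{Green's function estimates via local energy and dyadic decomposition.} To bound the resolvent for $q\neq 2$ uniformly in $h$, I would work with the discrete Green's function $G^\lambda_h(\cdot,y) \in S_h(\Gamma_h(s))$ defined by $(\lambda I - A_h)G^\lambda_h(\cdot,y) = \delta_h^y$, with $\delta_h^y$ the discrete delta at $y$, and target the pointwise estimate
\begin{align*}
|G^\lambda_h(x,y)| + |\lambda|^{-1/2}\,|\nabla_{\Gamma_h(s)} G^\lambda_h(x,y)| \leq C\,\Phi_\lambda(d_h(x,y)),
\end{align*}
where $\Phi_\lambda$ is a $|\lambda|^{1/2}$-scaled Gaussian (truncated at the diameter of the closed surface) whose $L^1$-norm is $O(|\lambda|^{-1})$; Schur's test then delivers the resolvent bound on every $L^q$. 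The pointwise bound would be obtained by the Schatz--Thom\'ee--Wahlbin technique adapted to the discrete closed surface. First, one compares $G^\lambda_h$ with the continuous Green's function on $\Gamma(s)$ (whose sharp estimates are available by Demlow's closed-surface bounds cited in the introduction), using the high-order consistency of the discrete delta on $\Gamma_h(s)$ with its smooth counterpart on $\Gamma(s)$ (Lemma~\ref{lemma:delta}) to control the transfer. Second, the Galerkin remainder is decomposed dyadically into annular shells $D_j=\{x\in\Gamma_h(s):d_h(x,y)\sim 2^j h\}$, and the local energy estimate (Lemma~\ref{LocEEst}) is iterated inward from shell to shell using Lemma~\ref{LemGm2}, with the iteration terminated near the injectivity radius of the closed surface (where a global $L^2$ estimate takes over).

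\textbf{Main obstacle.} The principal difficulty I anticipate is in the dyadic iteration: each local energy step produces additive geometric perturbation terms of size $h^{k+1}$ stemming from the mismatch between $\Gamma(s)$ and $\Gamma_h(s)$, as well as from the fact that the change-of-variable matrix $B_h$ has jumps across element edges (as the paper emphasises, necessitating a globally continuous surrogate $\tilde B$). Over $O(\log(1/h))$ shells these perturbations could in principle accumulate logarithmically, and preventing this requires that at every scale one obtains an actual \emph{contraction} in the shell norm rather than mere boundedness. This is the role of the super-approximation property (P3) alluded to in the introduction: it furnishes the smallness of commutators between smooth cut-off functions supported on a shell and the discrete Laplacian, allowing the iterated local energy estimate to telescope. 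Once the contraction is secured, summing the resulting geometric series delivers the pointwise Gaussian bound on $G_h^\lambda$ with an $h$-independent constant, and the passage from there to the stated maximal regularity estimate is routine.
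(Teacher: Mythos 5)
Your proposal takes a genuinely different route from the paper. The paper stays in the \emph{time domain}: it works with the difference of regularized heat kernels $F=\rg_h^\ell-\rg$, proves the $L^1$-in-space-time bounds $\|\partial_tF\|_{L^1}+\|t\partial_{tt}F\|_{L^1}\le C$ and the large-time exponential decay in Lemma~\ref{LemGm2} via the dyadic decomposition of $(0,1)\times\Gamma$ and the parabolic local energy estimate of Lemma~\ref{LocEEst}, and then feeds these into the Calder\'on--Zygmund argument of \cite{Li-MCOM-2019} (Sections~4.3--4.4) applied to the decomposition $\partial_t u_h = \mathcal M_h(a_hf_h^\ell)+\mathcal K_h(a_hf_h^\ell)+a_hf_h^\ell$. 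You instead move to the \emph{frequency domain}: uniform $L^q$ sectorial bounds for $(\lambda I-A_h)^{-1}$ obtained from pointwise Gaussian estimates of the elliptic discrete Green's function $G_h^\lambda$, then operator-valued multiplier theory on the UMD space $L^q$. Both are legitimate strategies in the discrete maximal-regularity literature; the paper's time-domain route avoids R-boundedness machinery and produces, as a free by-product, the semigroup analyticity and maximum-norm stability results recorded in the remark after Lemma~\ref{LemGm2}, whereas a resolvent route would more naturally give a bounded $H^\infty$-calculus.

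That said, three points in your write-up do not hold up as stated. First, you invoke Lemma~\ref{LocEEst} and Lemma~\ref{LemGm2} to run the dyadic iteration for the resolvent Green's function $G_h^\lambda$, but those lemmas are formulated and proved for the \emph{parabolic} error equation (the latter is literally a statement about $F=\rg_h^\ell-\rg$, i.e.\ the conclusion of the paper's iteration, not a tool you can feed into an elliptic one). A resolvent version would require proving elliptic analogues with $\lambda$-dependent weights; that is doable in the Schatz--Thom\'ee--Wahlbin framework, but it is additional work your proposal does not carry out. Second, a sectorial resolvent bound in the operator norm alone does not suffice for Weis' theorem, which requires R-sectoriality; the correct bridge is indeed the Gaussian kernel bound you mention (Blunck--Kunstmann/Coulhon--Duong-type implications), but then the logic runs through the kernel estimate directly, and the resolvent bound is no longer the hinge you present it as. Third, $A_h=\Delta_{\Gamma_h(s),h}$ on the closed surface has a one-dimensional kernel (constants), so $(\lambda I-A_h)^{-1}$ is unbounded as $\lambda\to0$ and the claimed $C/|\lambda|$ bound on the full sector cannot hold on all of $S_h$. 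The paper circumvents this by restricting to $\mathring S_h(\Gamma_h)$ in Part~II of the proof of Lemma~\ref{LemGm2}, using the uniform spectral gap to obtain exponential decay of $\partial_t\rg_h$ for $t\ge1$; your frequency-domain argument would need a parallel reduction to the mean-zero subspace before the Schur test step can go through on $\R_+$.
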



\begin{theorem}[Maximal regularity of semi-discrete surface FEM on an evolving surface]
\label{thm:disc_max_reg}
	{\it In the case $u_{h,0} = 0$, the solution $u_h$ of \eqref{eq:scheme} obeys the following estimate for any given $p,q\in(1,\infty)$, and $h \leq h_0${\rm:}
		\begin{align}
			\| \partial_t u_h \|_{L^p_t(0,T; L^q(\Gamma_h(t)))}
			+
			\| \Delta_{\Gamma_h({t}),h} u_h \|_{L^p_t(0,T; L^q(\Gamma_h(t)))}
			\leq
			C
			\| f_h \|_{L^p(0,T; L^q(\Gamma_h(t)))}
		\end{align}
		where $f_h(t,x) = P_h(\Gaht) f^{-\ell}(t,x)$, and the constant $C > 0$ is independent of $h$, but depends on $T$. 
	}
\end{theorem}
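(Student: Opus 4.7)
The plan is to reduce Theorem~\ref{thm:disc_max_reg} to the stationary-surface estimate Theorem~\ref{thm:disc_max_reg2} via a temporal freezing and perturbation argument on short subintervals. The transport property $\mdth \phi_j = 0$ of the evolving nodal basis $\{\phi_j(t,\cdot)\}_{j=1}^N$ yields, for each reference time $s \in [0,T]$, a canonical linear isomorphism $\iota_{s,t}\colon S_h(\Gaht) \to S_h(\Gahs)$ that preserves nodal coefficients and intertwines time derivatives, i.e.\ $\iota_{s,t}(\mdth u_h(t)) = \partial_t \bigl(\iota_{s,t} u_h(t)\bigr)$. Pushing $u_h$ forward by $\iota_{s,\cdot}$ to a time-dependent function $w_h(t) \in S_h(\Gahs)$, and pulling back the moving-surface mass and stiffness bilinear forms of \eqref{eq:scheme} along $\iota_{s,t}$, converts the problem into a parabolic equation on the \emph{fixed} discrete surface $\Gahs$ whose mass and stiffness coefficients are inherited smoothly from the discrete flow map $X_h$.

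I would then rewrite this pulled-back equation as a perturbation of the frozen stationary problem at $t=s$:
\begin{equation*}
    \partial_t w_h - \Delta_{\Gahs,h} w_h = \tilde f_h + \mathcal R_h^{(1)}(t)\,\partial_t w_h + \mathcal R_h^{(2)}(t)\,w_h ,
\end{equation*}
where $\mathcal R_h^{(1)}$ and $\mathcal R_h^{(2)}$ collect the discrepancies between the time-$t$ and time-$s$ mass and stiffness forms respectively. The smoothness of the flow yields coefficient differences bounded in $L^\infty$ by $C|t-s|$, so that $\|\mathcal R_h^{(1)}(t)\|_{L^q \to L^q} \le C|t-s|$ and $\|\mathcal R_h^{(2)}(t)\phi_h\|_{L^q(\Gahs)} \le C|t-s|\,\|\Delta_{\Gahs,h}\phi_h\|_{L^q(\Gahs)} + C\,\|\nabla_{\Gahs}\phi_h\|_{L^q(\Gahs)}$, uniformly in $h$; the unavoidable gradient term reflects the fact that the norm equivalence between $\Delta_{\Gaht,h}$ and $\Delta_{\Gahs,h}$ only holds modulo such a lower-order contribution (Lemma~\ref{lemma:delta_h} and Remark~\ref{rmk:delta_h}).

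Applying Theorem~\ref{thm:disc_max_reg2} on a short subinterval $I = [s, s+\tau]$ then bounds $\|\partial_t w_h\|_{L^p(I;L^q(\Gahs))} + \|\Delta_{\Gahs,h} w_h\|_{L^p(I;L^q(\Gahs))}$ by a constant times the $L^p(I;L^q(\Gahs))$-norm of the right-hand side; the perturbation contributions $C\tau\|\partial_t w_h\|$ and $C\tau\|\Delta_{\Gahs,h} w_h\|$ are absorbed on the left for $\tau$ sufficiently small (independently of $h$), and the residual gradient term is dealt with using the discrete interpolation inequality Lemma~\ref{lemma:interp} combined with a Gr\"onwall/Young argument. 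To cover $[0,T]$ I would partition it into $\mathcal O(1/\tau)$ such subintervals, starting from the zero initial condition on $[0,\tau]$ and, on each subsequent $[t_k,t_{k+1}]$, extending $w_h$ backward by the discrete heat semigroup of $\Delta_{\Gamma_h(t_k),h}$ so as to reduce to the zero-initial-data setting of Theorem~\ref{thm:disc_max_reg2}; the initial trace $w_h(t_k)$ lies in the correct real interpolation space by the maximal regularity bound already established on the preceding subinterval. A final application of Lemma~\ref{lemma:delta_h} and Remark~\ref{rmk:delta_h} transports the estimates on $\Gahs$ back to the moving-surface norms $\|\mdth u_h\|_{L^p_t(0,T;L^q(\Gaht))} + \|\Delta_{\Gaht,h} u_h\|_{L^p_t(0,T;L^q(\Gaht))}$.

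The main obstacle is securing the $h$-uniform bound on $\mathcal R_h^{(2)}$. The change-of-variables matrix $B_h(t,\cdot)$ arising from pulling $\nabla_{\Gaht}$ back to $\nabla_{\Gahs}$ is only piecewise smooth, with jumps across element edges due to the parametric nature of the finite elements, so a naive estimate of its time difference composed with a spatial derivative would cost a factor of $h^{-1}$. Controlling this requires replacing $B_h$ by the globally continuous substitute $\tilde B$ flagged in the introduction and invoking the super-approximation property (P3), then carefully pairing the resulting smooth error with the discrete second-derivative estimate supplied by Theorem~\ref{thm:disc_max_reg2}, so that $\mathcal R_h^{(2)}$ produces a genuine $O(\tau)$ prefactor on $\|\Delta_{\Gahs,h} w_h\|$ plus a first-order term that can be closed by the interpolation inequality --- exactly the structure required for the absorption step to go through.
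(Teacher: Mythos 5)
Your freezing-and-perturbation reduction to the stationary case is the right idea and matches the structure of the paper's proof: you correctly identify that the nodal identification provides the isomorphism $S_h(\Gaht)\cong S_h(\Gahs)$, that the stiffness perturbation must be written through a globally continuous replacement $\tilde B$ of the discontinuous $\bar B_h$, that super-approximation (P3) is what rescues the derivative hitting $v_h$, and that the resulting bound is precisely the content of Lemma \ref{lemma:delta_h} and Remark \ref{rmk:delta_h}, with the gradient term cleaned up by Lemma \ref{lemma:interp}. Where you diverge from the paper is in how the estimate is \emph{closed}: you localize in time to short intervals $[t_k,t_{k+1}]$ of length $\tau$, absorb the $O(\tau)$ perturbation on each piece, and patch the pieces together via a backward extension by the discrete heat semigroup, invoking the fact that $w_h(t_k)$ lives in the correct real interpolation (trace) space. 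The paper instead applies Theorem \ref{thm:disc_max_reg2} once for each fixed $s$ over the \emph{full} interval $[0,s]$, exploits that the perturbation appears with a factor $|s-t|$ inside the time integral, raises the maximal-regularity functional to the $p$-th power, and runs Gr\"onwall in $s$; no partitioning and no restarting from nonzero initial data is ever needed.

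The consequence of this difference is a genuine gap in your closing step. Your patching requires a discrete, $h$-uniform maximal-regularity estimate with nonzero initial data in the trace space $(L^q,D(\Delta_{\Gahs,h}))_{1-1/p,p}$, together with the $h$-uniform trace embedding that places $w_h(t_k)$ into this space from the preceding subinterval. Neither is stated in the paper, and neither follows for free from Theorem \ref{thm:disc_max_reg2}, which is formulated only for $u_{h,0}=0$; one would have to additionally verify (uniformly in $h$) that the discrete semigroup $e^{t\Delta_{\Gahs,h}}$ is analytic with the correct interpolation-space characterization before your subinterval assembly can be carried out. This is abstractly provable, but it is substantial additional machinery. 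The paper's Gr\"onwall device avoids the issue entirely, because the constant in front of the memory term is already $|s-t|$ rather than a fixed $O(1)$, so the absorption happens in integrated form via
\[
\int_0^s (s-t)^p\Bigl(\|\partial_t u_h(t)\|_{L^q}^p+\|\Delta_{\Gamma_h(0),h}u_h(t)\|_{L^q}^p\Bigr)\,\d t
\;\le\; pT^{p-1}\int_0^s L_{p,q}(t)\,\d t ,
\]
which closes against $L_{p,q}(s)$ by Gr\"onwall with zero initial data throughout. If you want to keep your subinterval route, you would need to add the discrete trace-space result as a lemma; otherwise the cleaner option is to replace the partitioning step by the paper's global-in-$s$ Gr\"onwall argument.
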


The proofs of Theorems \ref{thm:disc_max_reg2}--\ref{thm:disc_max_reg} are presented in the following sections.

\section{Preliminary results on stationary surfaces}
\label{section:stationary surfaces}
\setcounter{equation}{0}

In this section, we develop preliminary results of geometric perturbation estimates, Green’s function estimates and local energy estimates on surfaces $\Gamma(s)$ and $\Gahs$ for some fixed $s\in [0,T]$. For the simplicity of notations, we omit the dependence on $s$ by using $\Gamma$ and $\Gah$ instead to denote the surfaces. The constants in this section will not depend on $s$, but may depend on $T$.

\subsection{Function spaces}

We use the conventional notations of Sobolev spaces $W^{s,q}(\Gamma)$, $s\ge 0$ and $1\leq q\leq\infty$, with abbreviations $L^q:=W^{0,q}(\Gamma)$, $W^{s,q}:=W^{s,q}(\Gamma)$ and $H^s:=W^{s,2}(\Gamma)$. We denote by $H^{-s}(\Gamma)$ the dual space of $H^s_0(\Gamma)$. The latter is defined as the closure of $C^\infty_0(\Gamma)$ in $H^s(\Gamma)$. 

For any given function $f \colon (0,T)\rightarrow W^{s,q}$
we define the following Bochner norm (for space-time functions):  
\begin{align}
	&\|f\|_{L^p(0,T;W^{s,q})} = 
	\big\| \|f(\cdot)\|_{W^{s,q}}\big\|_{L^p(0,T)} ,\quad\forall\,\, 1\leq p,q\leq \infty,\,\, s\in\R   . 
\end{align} 
For any subdomain $D\subset \Gamma$, we define 
\begin{align}\label{Def-HsD}
	\|f\|_{W^{s,q}(D)}:=
	\inf_{\tilde f|_D=f}\|\tilde f\|_{W^{s,q}(\Gamma)} ,\quad\forall\,\, 1\leq q\leq \infty ,\,\, s\in\R ,
\end{align} 
where the infimum extends over all possible
$\tilde f$ defined on $\Gamma$ such that
$\tilde f=f$ in $D$. 
Similarly, for any subdomain $Q\subset {\mathcal Q}=(0,1)\times\Gamma$, 
we define 
\begin{align}\label{DefLpX}
	\|f\|_{L^pW^{s,q}(Q)}:=
	\inf_{\tilde f|_Q=f}\|\tilde f\|_{L^p(0,T;W^{s,q})} 
	,\quad\forall\,\, 1\leq p,q\leq \infty ,\,\,  s\in\R  ,
\end{align} 
where the infimum extends over all possible
$\tilde f$ defined on ${\mathcal Q}$ such that
$\tilde f=f$ in $Q$. 
For more details on these spaces we refer to \cite{AlphonseElliottStinner_2015,AlphonseCaetanoDjurdjevacElliottS_2023}.

In addition, the following notations of inner products will be used:  
\begin{align}\label{inner-products}
	(\phi,\varphi):=\int_\Gamma \phi(x)\varphi(x)\d x,\qquad
	[u,v]:=\int_0^T\int_{\Gamma} u(t,x)v(t,x)\d x \, \d t .
\end{align} 
For any function $w$ defined on ${\mathcal Q}$, We denote $w(t)=w(t,\cdot)$. The notation $1_{0<t<T}$ stands for the characteristic function of time interval $(0,T)$, i.e.,~$1_{0<t<T}(t)=1$ if $t\in(0,T)$ and $1_{0<t<T}(t)=0$ if $t\notin(0,T)$.

\subsection{Properties of the finite element space}
\label{sec:hypo}

For any subdomain $D\subset\Gamma$, we denote by $S_h(D)$ the space of functions in $S_h(\Gamma)$  restricted to the domain $D$, and denote by $S_h^0(D)$ the subspace of $S_h(D)$ consisting of functions which equal zero outside $D$. For $d>0$, we denote by $B_d (D) =\{x\in\Gamma: {\rm dist}(x,D)\leq d\}$ a neighborhood of $D$ in $\Gamma$. On a quasi-uniform triangulation of surface $\Gamma$, there exist positive constants $K $ and $\kappa$ such that the triangulation and the corresponding finite element space $S_h(\Gamma)$ possess the following properties ($K$ and $\kappa$ are independent of the subset $D$ and $h$).
\medskip

\begin{enumerate}[label={\bf (P\arabic*)},ref=\arabic*]\itemsep=5pt

\item {\bf Quasi-uniformity:}

\noindent For all triangles $\tau_l^h$ in the partition,
the diameter $h_l$ of $\tau_l^h$ and the radius $\rho_l$
of its inscribed ball satisfy
$$
K^{-1}h\leq \rho_l \leq h_l\leq Kh .
$$

\item {\bf Inverse inequality:}

\noindent If $D$ is a union of elements in the partition, then, for $0\leq k\leq l\leq 1$ and 
$1\leq q\leq p\leq\infty$,
\begin{align*}
	\|\chi_h\|_{W^{l,p}(D)}
	\leq K h^{-(l-k)-(\dimsurf/q-\dimsurf/p)}\|\chi_h\|_{W^{k,q}(D)} ,
	\quad\forall\,\,\chi_h\in S_h .
\end{align*}

\item {\bf Local approximation and super-approximation:} 

\noindent There exists an operator $I_h \colon H^1(\Gamma) \rightarrow S_h$ with the following properties:

\begin{enumerate}[label=(\arabic*),ref=\arabic*]\itemsep=5pt
	\item 
	For $v\in H^{2}(\Gamma )$ the following estimate holds, for $\Gamma \subset \R^{\dimsurf + 1}$ with $\dimsurf = 1,2,3$, and $h \leq h_0$:
	\begin{align*}
		&\|v-I_hv\|_{L^2} +h \|\nabla(v- I_hv)\|_{L^2} \leq Kh^{2} \|v\|_{H^{2}} 
		.
	\end{align*}

	\item 
	If $d\geq 2h$ then the value of $I_h v$ in $D$ depends only on the value of $v$ in $B_d(D)$. 
	If $d\ge 2h$ and supp$(v)\subset \overline D$, then $I_hv\in S_h^0(B_{d}(D))$.


\item 
If $d\geq 2h$, $\chi=0$ outside $D$
and $|\partial^\beta\chi|\leq Cd^{-|\beta|}$
for all multi-index $\beta$,
then 
%
\begin{align*}
	&\psi_h\in S_h(B_{d}(D))\implies I_h(\chi\psi_h)\in S_h^0(B_{d}(D)),\\
	&\|\chi\psi_h-I_h(\chi\psi_h)\|_{L^2}
	+h\|\chi\psi_h-I_h(\chi\psi_h)\|_{H^1}
	\leq K  h d^{-1}
	\|\psi_h\|_{L^2(B_{d}(D))} .
\end{align*}

\item  If $d\ge 2h$ and $\chi\equiv 1$ on $B_{d}(D)$, then $I_h(\chi\psi_h)=\psi_h$ on $D$. 

\end{enumerate}

\end{enumerate}

Properties (P1)--(P3) hold for any quasi-uniform triangulation with the standard finite element spaces consisting of globally continuous piecewise polynomials of degree $r \geq 1$ (cf.~\cite[Appendix]{Schatz95}). 
\begin{remark}
	Properties (P1)--(P3) hold for parametric surface finite element spaces $S_h(\Gamma(t))$ and $S_h(\Gamma_h(t))$ for all $t\in[0,T]$, see \cite{Dem09,Dziuk2013b}.
\end{remark}

Given $\Gamma$ and $\Gah$,
we define the scalar-valued function $a_h(x)$ and the $\R^{(\dimsurf+1)\times (\dimsurf+1)}$-valued function $B_h(x)$ with $x\in \Gamma(s)$ to be the piecewise smooth prefactors in the following change of variables:
\begin{align}\label{inner-product-1}
	\int_{\Gamma_h} uv &= \int_{\Gamma} a_h(x) u^\ell v^\ell , \\
	\label{inner-product-2}
	\int_{\Gamma_h}\nabla_{\Gamma_h} u\cdot\nabla_{\Gamma_h} v &= \int_{\Gamma} B_h(x) \nabla_{\Gamma} u^\ell\cdot \nabla_{\Gamma} v^\ell .
\end{align}
%
Then the following estimates hold (see \cite{Dem09}):
\begin{lemma}\label{lemma:geo_perturb}
	Let $\Gamma$ and $\Gamma_h$ be as above. Then, for sufficiently small $h \leq h_0$, the prefactors satisfy the geometric estimates:
	\begin{align*}
		\| a_h - 1 \|_{L^\infty(\Gamma)} + \| B_h - I \|_{L^\infty(\Gamma)} \leq C h^{k+1} . \notag
	\end{align*}
\end{lemma}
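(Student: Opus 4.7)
My plan is to reduce the estimate to element-wise bounds on the geometric deviation of the lift map $q|_{\Gamma_h} \colon \Gamma_h \to \Gamma$ from the identity, using the standard representations of $a_h$ and $B_h$ through the signed distance function.

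First, I would recall from \cite{Dem09} the explicit formulas obtained by changing variables on each curved element $K \subset \Gamma_h$. Writing $x = q(x_h)$ with $x_h \in K$, the surface Jacobian $\mu_h$ of $q|_{\Gamma_h}$ yields $a_h = \mu_h$, and a similar computation gives an expression of the form
\[
	B_h = \mu_h \, P \, (I - d\,\mathcal H)^{-1} P_h (I - d\,\mathcal H)^{-1} P,
\]
where $d$ is the signed distance to $\Gamma$, $\mathcal H = \nabla^2 d$ is the Weingarten map extended to the tubular neighbourhood $D_\delta(\Gamma)$, and $P = I - \nu\nu^\top$, $P_h = I - \nu_h \nu_h^\top$ are the tangential projectors onto $\Gamma$ and $\Gamma_h$.

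Second, I would estimate every geometric quantity appearing on the right. The element-wise interpolation bound $\|q\circ F_K - F_K\|_{L^\infty(K_f^0)} \leq C h^{k+1}$ recorded in the excerpt, combined with the uniform regularity of $F_{K^0}$ from \eqref{P0} and the smoothness of $q$ on $D_\delta(\Gamma)$, yields via a Bramble--Hilbert argument on the flat reference element $K_f^0$ the stronger bound
\[
	\|q\circ F_K - F_K\|_{W^{1,\infty}(K_f^0)} \leq C h^{k+1}
\]
uniformly over $K$. Transferred to the physical element, this gives $\|d\|_{L^\infty(\Gamma_h)} \leq C h^{k+1}$, $\|\nu \circ q - \nu_h\|_{L^\infty(\Gamma_h)} \leq C h^{k+1}$, and $|\mu_h - 1| \leq C h^{k+1}$.

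Finally, a first-order Neumann-type expansion $(I - d\mathcal H)^{-1} = I + O(h^{k+1})$ together with $\|P - P_h\|_{L^\infty} = O(\|\nu - \nu_h\|_{L^\infty}) = O(h^{k+1})$, substituted into the formulas above, delivers the claimed $L^\infty$ estimates for both $a_h - 1$ and $B_h - I$; taking the maximum over all elements produces the global bound uniformly in $h \leq h_0$. The main obstacle is upgrading the pointwise $L^\infty$ approximation to the $W^{1,\infty}$ control of $q \circ F_K - F_K$: a naive inverse inequality would cost a factor $h$ and yield only $h^k$, so one must exploit the polynomial interpolation structure of $F_K$ for the smooth target $q \circ F_{K^0}$ and apply Bramble--Hilbert on the reference element to preserve the optimal $h^{k+1}$ rate.
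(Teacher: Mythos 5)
The paper itself does not prove Lemma~\ref{lemma:geo_perturb}; it states the result and cites \cite{Dem09} directly, so your proposal fills in an argument the authors left to the reference. Your overall strategy --- factoring $a_h$ and $B_h$ through the signed distance $d$, the Weingarten map $\mathcal H$, the surface measure quotient $\mu_h$, and the tangential projectors $P$, $P_h$ --- is indeed the standard route taken in \cite{Dem09} and in Dziuk's earlier work. However, the argument contains a genuine gap in the step that is supposed to deliver the $h^{k+1}$ rate.

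The claims $\|q\circ F_K - F_K\|_{W^{1,\infty}(K_{\rm f}^0)}\le Ch^{k+1}$ and, consequently, $\|\nu\circ q - \nu_h\|_{L^\infty(\Gamma_h)}\le Ch^{k+1}$ are false. Bramble--Hilbert for a degree-$k$ interpolant gives $\|u-I_k u\|_{W^{m,\infty}}\lesssim h^{k+1-m}|u|_{W^{k+1,\infty}}$, so at $m=1$ you land at $O(h^{k})$, not $O(h^{k+1})$; there is no mechanism that avoids losing one power of $h$ per derivative for polynomial interpolation. The sharp estimates are $\|d\|_{L^\infty(\Gamma_h)}\le Ch^{k+1}$, $\|\mu_h-1\|_{L^\infty}\le Ch^{k+1}$, but $\|\nu\circ q-\nu_h\|_{L^\infty}\le Ch^{k}$ and hence $\|P-P_h\|_{L^\infty}\le Ch^{k}$ (these are the rates recorded in \cite{Dem09}). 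If you feed $\|P-P_h\|=O(h^{k+1})$ into your formula for $B_h$, you prove a statement that is stronger than what holds for the individual factors, and the logic does not close.

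The actual reason $\|B_h-I\|_{L^\infty}$ nevertheless attains $O(h^{k+1})$ is a \emph{quadratic cancellation} in the product of projectors, not an improved normal estimate. One has
\begin{align*}
P - P P_h P = P\,\nu_h\nu_h^\top P, \qquad P\nu_h = P(\nu_h-\nu) = O(h^{k}),
\end{align*}
so $\|P-PP_hP\|_{L^\infty}=O(h^{2k})$, and $2k\ge k+1$ for all $k\ge 1$. Combining this with the $O(h^{k+1})$ bounds on $d$, $\mu_h-1$ and the Neumann expansion of $(I-d\mathcal H)^{-1}$ then yields $\|B_h-I\|_{L^\infty}\le C(h^{k+1}+h^{2k})\le Ch^{k+1}$. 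In short: the second step of your proof should be replaced by the $O(h^{k})$ normal estimate, and the third step must invoke the $P(P-P_h)P=O(h^{2k})$ cancellation rather than a first-order substitution of $P_h\approx P+O(h^{k+1})$.
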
 

Besides, the basic parametric finite element theory says that the $L^p$ and $W^{1,p}$ norms on $\Gamma$ and $\Gah$ are equivalent (cf.~\cite[p. 811]{Dem09}), i.e.~$\| \varphi_h \|_{L^p(\Gamma)} \sim \| \varphi_h \|_{L^p(\Gah)}$ and $\| \nabla_{\Gamma} \varphi_h \|_{L^p(\Gamma)} \sim \| \nabla_{\Gah} \varphi_h \|_{L^p(\Gah)}$ for all $\varphi_h\in S_h$ and $p\in[1,\infty]$. Here the notation $a\sim b$ means there exist a positive constant $C$ such that $C^{-1} a \leq b \leq C a$.

\subsection{Green's functions}
We consider the heat equation on a fixed closed and smooth surface $\Gamma$, i.e.,
\begin{align}\label{eq:HE}
	\left\{\begin{array}{ll}
		\partial_t u(t, x) - \Delta_{\Gamma} u(t, x)=f(t,x)
		&\mbox{in}\,\,\,(0,T)\times \Gamma , \\
		u(0,x)=u_0(x)
		&\mbox{in}\,\,\,\Gamma ,
	\end{array}\right.
\end{align}
and the corresponding semi-discrete finite element scheme:
\begin{align}\label{eq:HE_FEM}
	\left\{\begin{array}{ll}
		(\partial_t u_h(t,x),v_h)_{\Gamma_h}
		+ (\nabla_{\Gamma_h} u_h(t,x),\nabla_{\Gamma_h} v_h)_{\Gamma_h}
		= (f_h(t,x), v_h)_{\Gamma_h} ,
		&\forall \, v_h\in S_h(\Gamma_h), \\[5pt]
		u_h(0,x)=  u_{h,0} ,
	\end{array}\right.
\end{align}
where $f_h = P_h(\Gamma_h)f^{-\ell}$.
According to Lemma \ref{lemma:geo_perturb}, we can equivalently lift the scheme onto $\Gamma$:
\begin{align}\label{eq:HE_FEM_pb}
	\left\{\begin{array}{ll}
		(a_h(x) \partial_t u_h^\ell(t,x),v_h^\ell)_{\Gamma}
		+ (B_h(x) \nabla_{\Gamma} u_h^\ell(t,x),\nabla_{\Gamma} v_h^\ell)_{\Gamma}
		= (a_h(x) f_h^\ell(t,x), v_h^\ell)_{\Gamma} ,
		&\forall \, v_h^\ell\in S_h(\Gamma), \\[5pt]
		u_h^\ell(0,x)=  u_{h,0}^\ell .
	\end{array}\right.
\end{align}
Let $G(t,x,x_0)$ denote the Green's function (i.e.~the heat kernel)
of the parabolic equation \eqref{eq:HE}, i.e.~$G=G(\cdot,\cdot\, ,x_0)$ is the solution
of 
\begin{align}\label{GFdef}
	\left\{\begin{array}{ll}
		\partial_t G(\cdot,\cdot\, ,x_0)-\Delta_\Gamma G(\cdot,\cdot\, ,x_0)=0
		&\mbox{in}\,\,\, (0,T)\times \Gamma,\\
		G(0,\cdot,x_0)= \delta_{x_0}
		&\mbox{in}\,\,\,\Gamma .
	\end{array}\right.
\end{align}
The Li--Yau heat kernel estimate on closed manifold $\Gamma$ with Ricci curvature bounded from below (cf.~\cite[Corollary 3.1]{LY86}) gives
\begin{align}
	G(t,x,y) \leq c_0 t^{-\frac{\dimsurf}{2}} e^{c_1 t} e^{-\frac{d(x,y)^2}{c_0 t}} \qquad t > 0,
\end{align}
where the constant $c_0 > 0$ and $c_1 \geq 0$ only depend on the lower bound of the Ricci curvature. Additionally, $c_1 = 0$ if the Ricci curvature of $\Gamma$ is non-negative everywhere and $c_1 > 0$ for the general case. The self-adjointness of Laplace-Beltrami operator implies the symmetry of the kernel, i.e.~$G(t,x,y) = G(t,y,x)$, and the analyticity of the kernel $G(z,x,y)$ on the right complex plane $z\in\{t+is: t > 0 \}$ (see \cite[Lemma 2]{Davies97}).
Following \cite[Theorem 3.4.8]{Davies89}, the following pointwise bound for the complex time heat kernel $G(z,x,y)$ holds.
\begin{lemma}\label{lemma:guass-est-c}
	The analytic complex time heat kernel on surface $\Gamma$ satisfies 
	\begin{align}\label{eq:guass-est-c}
		| G(z,x,y) | \leq c_0 (\Re (z))^{-\frac{\dimsurf}{2}} e^{c_1 \Re(z)} e^{-\Re \frac{d(x,y)^2}{c_0 z}} ,
	\end{align}
	for all $\Re(z)> 0$ and $x,y \in \Gamma$.
\end{lemma}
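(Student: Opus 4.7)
The plan is to deduce the complex-time bound from the real-time Gaussian bound already in hand using the Davies exponential perturbation method, which is the standard route for such statements (Davies, \emph{Heat Kernels and Spectral Theory}, Thm.~3.4.8). The key observation is that the Laplace--Beltrami semigroup $e^{t\Delta_\Gamma}$ is a holomorphic contraction semigroup on $L^2(\Gamma)$ extending to $\{\Re z>0\}$, so the heat kernel $G(z,x,y)$ is analytic in $z$ for fixed $x,y$. I would therefore proceed by producing a \emph{weighted} $L^2\to L^\infty$ bound that is already Gaussian in the spatial variables, and analytically continuing the weight.

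First I would fix $y\in\Gamma$ and the $1$-Lipschitz function $\phi(\cdot)=d(\cdot,y)$, and for real parameter $\alpha$ consider the twisted generator $\Delta_\alpha:=e^{-\alpha\phi}\Delta_\Gamma e^{\alpha\phi}$, acting (as a quadratic form) by
\begin{align*}
\langle -\Delta_\alpha f,f\rangle = \int_\Gamma |\nabla_\Gamma f|^2 - \alpha^2\int_\Gamma |\nabla_\Gamma \phi|^2|f|^2 + (\text{antisymmetric terms}) .
\end{align*}
Since $|\nabla_\Gamma\phi|\le 1$ a.e., the real part of this numerical range is bounded below by $-\alpha^2\|f\|_{L^2}^2$, which yields $\|e^{z\Delta_\alpha}\|_{L^2\to L^2}\le e^{\alpha^2\Re z}$ for all $\Re z>0$ by the Hille--Yosida/Lumer--Phillips argument. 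Combining this with the on-diagonal bound $\|e^{(\Re z)\Delta_\Gamma}\|_{L^1\to L^\infty}\le c_0(\Re z)^{-\dimsurf/2}e^{c_1\Re z}$ (which is the diagonal of the Li--Yau Gaussian already recalled), and factoring $e^{z\Delta_\Gamma}=e^{(z/2)\Delta_\Gamma}\circ e^{(z/2)\Delta_\Gamma}$ with one half written in the twisted form $e^{\alpha\phi}e^{(z/2)\Delta_\alpha}e^{-\alpha\phi}$, gives the pointwise inequality
\begin{align*}
|G(z,x,y)|\le C(\Re z)^{-\dimsurf/2}e^{c_1\Re z}\,e^{\alpha^2\Re z - \alpha\,d(x,y)}\qquad(\alpha\in\R).
\end{align*}
Optimising in $\alpha$ then yields the Gaussian factor $e^{-d(x,y)^2/(c\Re z)}$ and recovers the real-time bound as the special case $z=t$.

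The step that requires the most care, and is the true obstacle, is producing the \emph{complex} quadratic decay $\exp(-\Re(d(x,y)^2/(c_0 z)))$ rather than the weaker $\exp(-d(x,y)^2/(c_0\Re z))$. To achieve this one must let the weight parameter be complex: replace $\alpha\in\R$ by $\alpha\in\C$ and observe that $\langle -\Delta_\alpha f,f\rangle$ now contributes a term $-\alpha^2$ (not $-|\alpha|^2$) to the numerical range, so one obtains $\|e^{z\Delta_\alpha}\|_{L^2\to L^2}\le e^{\Re(\alpha^2 z)}$. The pointwise bound then reads $|G(z,x,y)|\le C(\Re z)^{-\dimsurf/2}e^{c_1\Re z}e^{\Re(\alpha^2 z)-\Re(\alpha)\,d(x,y)}$, and minimising the exponent over $\alpha\in\C$ with the choice $\alpha=d(x,y)/(c_0 z)$ converts $\Re(\alpha^2 z)-\Re(\alpha)d(x,y)$ into $-\Re(d(x,y)^2/(c_0 z))$, which is exactly the exponent claimed. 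Justifying the use of complex $\alpha$ rigorously (i.e.\ that $e^{\alpha\phi}$ remains a bounded multiplier and that $z\mapsto e^{z\Delta_\alpha}$ is the correct analytic continuation of the twisted semigroup) is the technical heart of the argument; once this is in place the bound of Lemma~\ref{lemma:guass-est-c} follows by the optimisation just described, uniformly for all $\Re z>0$ and $x,y\in\Gamma$, with $c_0,c_1$ inherited from the real-time Li--Yau estimate.
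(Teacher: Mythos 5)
The paper offers no proof here---it simply cites Davies \cite[Theorem~3.4.8]{Davies89}---and the Davies--Gaffney exponential perturbation that you outline is indeed the engine behind that theorem, so the overall architecture is right. There is, however, a genuine gap in the central analytic step. Writing out the quadratic form of the twisted generator carefully gives
\[
\langle -\Delta_\alpha f,f\rangle
  = \|\nabla_\Gamma f\|_{L^2}^2
    - \alpha^2\!\int_\Gamma |\nabla_\Gamma\phi|^2 |f|^2
    - 2\mathrm{i}\,\alpha\,\Im\langle\nabla_\Gamma\phi\cdot\nabla_\Gamma f,\,f\rangle .
\]
For $\alpha\in\R$ the last (first-order commutator) term is purely imaginary and drops out of $\Re\langle -\Delta_\alpha f,f\rangle$, which is what makes the classical Gaffney argument work. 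Once $\alpha\in\C$, this term contributes $2\Im(\alpha)\,\Im\langle\nabla_\Gamma\phi\cdot\nabla_\Gamma f,f\rangle$ to the real part; after Cauchy--Schwarz and Young it costs an extra $|\Im\alpha|^2\|f\|_{L^2}^2$ in the lower bound, so the claimed accretivity estimate $\Re\langle -\Delta_\alpha f,f\rangle\ge -\Re(\alpha^2)\|f\|_{L^2}^2$ is false for general $\alpha\in\C$ (already $\alpha=\mathrm{i}$ gives a counterexample), and with it the asserted bound $\|e^{z\Delta_\alpha}\|_{L^2\to L^2}\le e^{\Re(\alpha^2 z)}$. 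You do ultimately pick $\alpha$ proportional to $1/z$ with a real constant, which is exactly the choice that makes $z\alpha$ real and kills the cross contribution $2\Im(z\alpha)\Im\langle\cdot\rangle$ in $\Re\langle -z\Delta_\alpha f,f\rangle$; but this restriction should be stated, because the optimisation is only legitimate over the admissible family, not over all of $\C$.

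The same issue infects the real-$\alpha$ step: even with $\alpha\in\R$, the Gr\"onwall/Lumer--Phillips argument applied to $\frac{\mathrm{d}}{\mathrm{d}t}\|e^{tz\Delta_\alpha}f\|_{L^2}^2$ does not give $\|e^{z\Delta_\alpha}\|_{L^2\to L^2}\le e^{\alpha^2\Re z}$; because $-2\Re(z)\|\nabla v\|^2$ must be spent absorbing the cross term $2\alpha\Im(z)\Im\langle\nabla\phi\cdot\nabla v,v\rangle$, what actually comes out is $\|e^{z\Delta_\alpha}\|_{L^2\to L^2}\le e^{\alpha^2|z|^2/\Re z}$. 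This is, however, good news: optimising $\alpha^2|z|^2/\Re z-\alpha\,d(x,y)$ over real $\alpha$ already yields the exponent $-\Re\big(d(x,y)^2/(4z)\big)$, which is the one claimed in the lemma. In other words, once the Davies--Gaffney bound for complex time is quoted correctly, the real-weight version of your argument suffices, and the excursion to complex $\alpha$ is unnecessary. Either route (real $\alpha$ with the correct $|z|^2/\Re z$ exponent, or complex $\alpha=\beta/z$ with $\beta\in\R$) closes the proof; as currently phrased the proposal rests on an $L^2$-estimate for the twisted semigroup that is not true as stated. One should also say a word more about how the weighted $L^2\to L^2$ bound is combined with the on-diagonal complex-time bound $\|e^{z\Delta_\Gamma}\|_{L^1\to L^\infty}\le c_0(\Re z)^{-m/2}e^{c_1\Re z}$ to reach a pointwise Gaussian bound on the kernel, since $\delta_y\notin L^2$; this is standard (one splits $e^{z\Delta_\Gamma}=e^{(z/2)\Delta_\Gamma}e^{(z/2)\Delta_\Gamma}$ and uses the diagonal bound at time $z/2$), but it belongs in the argument.
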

Then Cauchy's integral formula says that for all real time $t > 0$
\begin{equation}
	\partial_t^kG(t,x,y)
	=\frac{k!}{2\pi i}
	\int_{|z-t|=\frac{t}{2}}\,\, \frac{G(z,x,y)}{(z-t)^{k+1}}\d z ,
\end{equation}
which, together with \eqref{eq:guass-est-c}, yields the following Gaussian pointwise estimate for the time derivatives of Green's function:
\begin{align}
	&|\partial_t^kG(t,x,x_0)|\leq \frac{C_k}{t^{k+\dimsurf/2}} e^{C_k t} e^{-\frac{d(x-x_0)^2}{C_kt}}, &&
	\forall\, x,x_0\in \Gamma,\,\,\, \forall\, t>0, \,\, k=0,1,2,\dotsc \label{GausEst1} ,
\end{align} 
where the constant $C_k$ depends on $c_0$ and $c_1$ which are related to the Ricci curvature of $\Gamma$.

As constructed in \cite[Lemma 2.2]{Tho00}, for any $x_0\in \bar K^{l}\subseteq\Gamma$ (where $K^{l}$ is a lifted open triangle in the triangulation of $\Gamma$), there exists a function $\tilde\delta_{x_0}\in C_0^\infty(K^{l})$ such that
\begin{align*}
	\chi_h(x_0)=\int_{\Gamma}\chi_h \tilde\delta_{x_0}\d x,
	\quad\forall\,\chi_h\in S_h(\Gamma) ,
\end{align*}
and
\begin{align}
	&\|\tilde\delta_{x_0}\|_{W^{l,p}}
	\leq C h^{-l-\dimsurf(1-1/p)}
	\qquad \mbox{for} \,\,\,1\leq p\leq\infty,
	\,\,\, l=0,1,2,\dotsc , \label{reg-Delta-est}\\
	&\sup_{y\in \Gamma} \int_\Gamma |\tilde\delta_{y}(x)|\d x+
	\sup_{x\in \Gamma}\int_\Gamma |\tilde\delta_{y}(x)|\d y \le C .
	\label{reg-Delta-est-2}
\end{align}

Let $\delta_{x_0}$ denote the Dirac Delta function centered at $x_0\in \Gamma$. In other words, $\int_\Gamma\delta_{x_0}(y)\varphi (y)\d y=\varphi(x_0)$ for arbitrary $\varphi\in C(\Gamma)$.  
Then the discrete Delta function associated to $\Gamma$
$$
\tilde\delta_{h,x_0} :=P_h(\Gamma)  \delta_{x_0}=P_h(\Gamma) \tilde\delta_{x_0} \qquad\forall x_0\in \Gamma
$$ 
decays exponentially away from $x_0$ (cf.~\cite[Lemma 2.3]{Tho00} and \cite[Lemma 6.1]{Tho06}): 
\begin{align}\label{Detal-pointwise}
	& |\tilde\delta_{h,x_0}(x)|=|P_h(\Gamma) \tilde\delta_{x_0}(x)|
	\leq Ch^{-\dimsurf} e^{-\frac{ d(x,x_0) }{K h }} ,
	\quad \forall\, x,x_0\in \Gamma  .
\end{align}

Let $\rg=\rg(\cdot,\cdot\, , x_0)$, $x_0\in \Gamma$ 
be the regularized Green's function
of the parabolic equation on the closed manifold $\Gamma$, defined by 
\begin{align}\label{RGFdef}
	\left\{\begin{array}{ll}
		\partial_t\rg(\cdot,\cdot\, , x_0)-\Delta_\Gamma \rg(\cdot,\cdot\, , x_0)=0
		&\mbox{in}\,\,\,(0,T)\times \Gamma,\\
		\rg(0,\cdot ,x_0)=\tilde\delta_{x_0} 
		&\mbox{in}\,\,\,\Gamma .
	\end{array}\right.
\end{align}
The regularized Green's function can be represented by 
\begin{align}\label{expr-Gamma}
	&\rg(t,x,x_0)=\int_\Gamma  G(t,y,x)\tilde\delta_{x_0}(y)\d y=\int_\Gamma  G(t,x,y)\tilde\delta_{x_0}(y)\d y  . 
\end{align}
From the representation \eqref{expr-Gamma} and \eqref{GausEst1}, one can easily derive that the regularized Green's function $\rg$ also satisfies the Gaussian pointwise estimate in the region $\max( d(x, x_0), \sqrt t) \geq 2h$:
\begin{align}
	&|\partial_t^k\rg(t,x,x_0)|\leq \frac{C_k}{t^{k+\dimsurf/2}} e^{C_k t} e^{-\frac{d(x-x_0)^2}{C_kt}}, \qquad
	\forall\, x,x_0\in \Gamma,\,\,\forall\, t>0,  \label{GausEstGamma}
\end{align} 
with $k=0,1,2,\dotsc$.

Similarly,  for any $x_0\in \bar K\subseteq\Gamma_h$ (where $K$ is an open triangle in the triangulation of $\Gamma_h$),  we denote by $\bar\delta_{x_0}\in C_0^\infty(K)$ the regularized Green's function on $\Gamma_h$ (cf.~\cite[Lemma 2.2]{Tho00}), satisfying the following relations: 
 \begin{align*}
 	\chi_h(x_0)=\int_{\Gamma_h}\chi_h \bar\delta_{x_0}\d x,
 	\quad\forall\,\chi_h\in S_h(\Gamma_h) ,
 \end{align*}
 and
 \begin{align}
 	&\|\bar\delta_{x_0}\|_{W^{l,p}}
 	\leq C h^{-l-\dimsurf(1-1/p)}
 	\qquad \mbox{for}\,\,\,1\leq p\leq\infty,
 	\,\,\, l=0,1,2, \dotsc , \label{reg-Delta-est-3}\\
 	&\sup_{y\in \Gamma_h} \int_{\Gamma_h} |\bar\delta_{y}(x)|\d x+
 	\sup_{x\in \Gamma_h}\int_{\Gamma_h} |\bar\delta_{y}(x)|\d y \le C .
 	\label{reg-Delta-est-4}
 \end{align}
Let $\bar\delta_{h,x_0}$ be the discrete Green's function in $S_h(\Gamma_h)$, defined as 
$$
\bar\delta_{h,x_0} := P_h(\Gah) \delta_{x_0} = P_h(\Gah)  \bar\delta_{x_0} \qquad\forall x_0\in \Gamma_h .
$$  
By the same argument as in a flat domain (see \cite[Lemma 6.1]{Tho06}), it is straightforward to verify that $\bar\delta_{h,x_0}$ exponentially decays away from $x_0$, i.e., 
\begin{align}\label{Detal-pointwise1}
	& |\bar\delta_{h,x_0}(x)|=|P_h(\Gah) \bar\delta_{x_0}(x)|
	\leq Kh^{-\dimsurf} e^{-\frac{ d(x,x_0) }{K h }} ,
	\quad \forall\, x,x_0\in \Gamma_h .
\end{align}
This estimate of the discrete Delta function also implies the boundedness of projection $P_h(\Gah)$ in $L^q(\Gamma_h)$, i.e., 
\begin{align}\label{L^p-stability-P_h} 
	&\|P_h(\Gah) v\|_{L^p(\Gamma_h)} \le C \| v\|_{L^p(\Gamma_h)} \quad\forall\, v\in L^p(\Gamma_h),
	\,\,\,1\le p\le\infty. 
\end{align} 
Let $\rg_h=\rg_h(\cdot,\cdot,  x_0)\in S_h(\Gamma_h), x_0\in \Gah,$ be the discrete Green's function associated to \eqref{eq:HE_FEM}, i.e., 
\begin{align}
	\left\{\begin{array}{ll}
		(\partial_t \rg_h(t,\cdot,x_0),v_h)_{\Gamma_h}
		+ (\nabla_{\Gamma_h} \rg_h(t,\cdot,x_0),\nabla_{\Gamma_h} v_h)_{\Gamma_h}
		= 0 ,
		&\forall \, v_h\in S_h(\Gamma_h), \\[5pt]
		\rg_h(0,\cdot,x_0) =  \bar\delta_{h,x_0} .
	\end{array}\right.
\end{align}
Then the discrete Green's function $\rg_h(t, x, x_0)$ is symmetric with respect to $x$ and $x_0$. Moreover, in view of the relations in \eqref{inner-product-1}--\eqref{inner-product-2}, the lifted discrete Green's function $\rg_h^\ell(t,x,x_0)=\rg_h(t,q^{-1}(x),  q^{-1}(x_0))$, for $x,x_0\in\Gamma$, satisfies the following weak formulation on $\Gamma$: 
\begin{align}\label{EqGammh}
	\left\{\begin{array}{ll}
		(a_h(\cdot) \partial_t\rg_h^\ell(t,\cdot,x_0),v_h^\ell)_{\Gamma}
		+ (B_h(\cdot) \nabla_{\Gamma} \rg_h^\ell(t,\cdot,x_0),\nabla_{\Gamma} v_h^\ell)_{\Gamma}
		=0 ,
		&\forall \, v_h\in S_h(\Gamma), \\[5pt]
		\rg_h^\ell(0,\cdot,x_0)=  \bar\delta_{h,q^{-1}(x_0)} ^\ell .
	\end{array}\right.
\end{align}

By testing \eqref{eq:HE} and \eqref{eq:HE_FEM} with the backward Green's function and discrete Green's functions, respectively, the solutions of \eqref{eq:HE} and \eqref{eq:HE_FEM} can be represented by 
\begin{align}
	u(t,x_0)&= 
	\int_\Gamma G(t,x,x_0)u_0(x)\d x
	+ \int_0^t\int_\Gamma G(t-s,x,x_0)f(s,x)\d x\d s ,\label{eq:conv_u}\\
	u_h(t,x_0)
	&=
	\int_{\Gamma_h}  \rg_h(t,x,x_0)u_{h,0}(x)\d x
	+\int_0^t\int_{\Gamma_h}  \rg_h(t-s,x,x_0)f_h(s,x)\d x\d s \notag\\
	&=
	\int_{\Gamma} a_h(x) \rg_h^\ell (t,x,q(x_0))u_{h,0}^\ell (x)\d x
	+\int_0^t\int_{\Gamma} a_h(x) \rg_h^\ell (t-s,x,q(x_0))f_h^\ell(s,x)\d x\d s , \label{eq:conv_uh}
\end{align}
where the last equality uses \eqref{inner-product-1} and relation $f_h = P_h(\Gamma_h)f^{-\ell}$. 

\begin{remark}
The construction of the regularized Green's function is not unique, since we can indeed choose any smooth weight function in the reference element in the proof of \cite[Lemma 2.2]{Tho00}. 
By definition, $\bar\delta_{h,x_0}$ and $\tilde\delta_{h,x_0}$ are the $L^2$ projections of the Dirac delta functions onto the closed subspaces $S_h(\Gamma_h)$ and $S_h(\Gamma)$ (in the sense of distribution), respectively, and therefore they do not depend on the definitions of the regularized Green's functions $\bar\delta_{x_0}$ and $\tilde\delta_{x_0}$. 

\end{remark}
In order to see the relation between the regularized Delta functions $\bar\delta_{x_0}$ on $\Gamma_h$ and $\tilde\delta_{x_0}$ on $\Gamma$, respectively, we can consider a fixed point $x_0\in \bar K\subset\Gamma_h$ and push the regularized delta function $\bar\delta_{x_0}$ to $\Gamma$ through the distance projection $q$ (which projects any point in a neighborhood of $\Gamma$ onto $\Gamma$). 
This leads to the following relations: 
\begin{align}
	\bar\delta_{x_0} = (a_h^{-1})^{-\ell}\tilde\delta_{q(x_0)}^{-\ell} 
	\qquad\mbox{and}\qquad 
	\tilde\delta_{q(x_0)} = a_h \bar\delta_{x_0}^\ell 
	\qquad\mbox{for all}\,\,\, x_0\in\Gamma_h . 
\end{align}
This relation can be used to show the following estimate for the discrete Green's functions. 

\begin{lemma}\label{lemma:delta}
	Let $h \leq h_0$, then the discrete Green's functions satisfy the estimate
	\begin{align}
		\| \bar\delta_{h,x_0}^\ell - \tilde\delta_{h,q(x_0)} \|_{L^p(\Gamma)} 
		\leq
		C h^{k+1} \min \{\|  \tilde\delta_{q(x_0)} \|_{L^p(\Gamma)}, \|  \bar\delta_{x_0} \|_{L^p(\Gamma_h)} \} ,
	\end{align}
	for all $x_0\in\Gamma_h$ and $p\in [1,\infty]$.
\end{lemma}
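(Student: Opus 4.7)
\emph{Proof proposal.} The plan is to identify a clean algebraic bridge between the two discrete delta functions through the $L^2$-projection $P_h(\Gamma)$ and the geometric prefactor $a_h$, then use the smallness $\|1-a_h\|_{L^\infty(\Gamma)}\le Ch^{k+1}$ from Lemma~\ref{lemma:geo_perturb} together with the $L^p$-stability of the projections.

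First, combining the defining property $(\bar\delta_{h,x_0},v_h)_{\Gamma_h}=v_h(x_0)$ for every $v_h\in S_h(\Gamma_h)$, the change of variables \eqref{inner-product-1}, and the identity $v_h^\ell(q(x_0))=v_h(x_0)$ for $x_0\in\Gamma_h$, I would derive, for every $v_h^\ell\in S_h(\Gamma)$,
\[
(a_h\,\bar\delta_{h,x_0}^\ell,v_h^\ell)_{\Gamma}=(\bar\delta_{h,x_0},v_h)_{\Gamma_h}=v_h(x_0)=v_h^\ell(q(x_0))=(\tilde\delta_{h,q(x_0)},v_h^\ell)_{\Gamma}.
\]
Since $\tilde\delta_{h,q(x_0)}$ and $\bar\delta_{h,x_0}^\ell$ both lie in $S_h(\Gamma)$, this reads $P_h(\Gamma)(a_h\bar\delta_{h,x_0}^\ell)=\tilde\delta_{h,q(x_0)}$ and $P_h(\Gamma)\bar\delta_{h,x_0}^\ell=\bar\delta_{h,x_0}^\ell$, and subtraction yields the key identity
\[
\bar\delta_{h,x_0}^\ell-\tilde\delta_{h,q(x_0)}=P_h(\Gamma)\bigl((1-a_h)\,\bar\delta_{h,x_0}^\ell\bigr).
\]

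From here, the bound in terms of $\|\bar\delta_{x_0}\|_{L^p(\Gamma_h)}$ is immediate. Applying the $L^p$-stability of $P_h(\Gamma)$ (which holds by the same argument leading to \eqref{L^p-stability-P_h}, using the exponential decay \eqref{Detal-pointwise}), Lemma~\ref{lemma:geo_perturb}, the $L^p$-norm equivalence for lifted finite element functions, and finally the stability \eqref{L^p-stability-P_h} applied to $\bar\delta_{h,x_0}=P_h(\Gamma_h)\bar\delta_{x_0}$, I obtain
\[
\|\bar\delta_{h,x_0}^\ell-\tilde\delta_{h,q(x_0)}\|_{L^p(\Gamma)}\le Ch^{k+1}\|\bar\delta_{h,x_0}^\ell\|_{L^p(\Gamma)}\le Ch^{k+1}\|\bar\delta_{x_0}\|_{L^p(\Gamma_h)}.
\]

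For the alternative bound with $\|\tilde\delta_{q(x_0)}\|_{L^p(\Gamma)}$ on the right, I would insert the triangle inequality $\|\bar\delta_{h,x_0}^\ell\|_{L^p(\Gamma)}\le\|\bar\delta_{h,x_0}^\ell-\tilde\delta_{h,q(x_0)}\|_{L^p(\Gamma)}+\|\tilde\delta_{h,q(x_0)}\|_{L^p(\Gamma)}$ into the previous estimate and absorb the first resulting term into the left-hand side, which is legitimate for $h\le h_0$ small enough that $Ch^{k+1}\le\tfrac12$. The remaining factor $\|\tilde\delta_{h,q(x_0)}\|_{L^p(\Gamma)}=\|P_h(\Gamma)\tilde\delta_{q(x_0)}\|_{L^p(\Gamma)}$ is finally dominated by $\|\tilde\delta_{q(x_0)}\|_{L^p(\Gamma)}$ via the $L^p$-stability of $P_h(\Gamma)$, completing both branches of the $\min$. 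The main subtlety is the very first step: $P_h(\Gamma)$ and $P_h(\Gamma_h)$ are \emph{not} conjugate under the distance map, so the naive guess $\bar\delta_{h,x_0}^\ell=\tilde\delta_{h,q(x_0)}$ is genuinely false; the correct correction sits inside the projection in the form of the geometric factor $(1-a_h)$, and every remaining ingredient is then either a stability statement or the geometric estimate of Lemma~\ref{lemma:geo_perturb}.
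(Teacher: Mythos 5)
Your proof is correct and takes a genuinely cleaner route than the paper's. The paper starts from the pointwise relation $\tilde\delta_{q(x_0)} = a_h \bar\delta_{x_0}^\ell$ between the \emph{regularized} (not yet projected) delta functions, writes the target as $(P_h(\Gamma_h)\bar\delta_{x_0})^\ell - P_h(\Gamma)(a_h\bar\delta_{x_0}^\ell)$, and splits it into two pieces; one of these, $(P_h(\Gamma_h)\bar\delta_{x_0})^\ell - P_h(\Gamma)\bar\delta_{x_0}^\ell$ (the mismatch between the two projections acting on the same function), is then handled by a somewhat involved duality argument in which a change of variables is pushed onto the test function. Your observation that
\begin{align*}
(a_h\bar\delta_{h,x_0}^\ell, v_h^\ell)_\Gamma = (\bar\delta_{h,x_0}, v_h)_{\Gamma_h} = v_h(x_0) = v_h^\ell(q(x_0)) = (\tilde\delta_{h,q(x_0)}, v_h^\ell)_\Gamma \quad \forall\, v_h^\ell\in S_h(\Gamma)
\end{align*}
collapses all of this into the single identity $\bar\delta_{h,x_0}^\ell - \tilde\delta_{h,q(x_0)} = P_h(\Gamma)\bigl((1-a_h)\bar\delta_{h,x_0}^\ell\bigr)$, after which one only needs $L^p$-stability of $P_h(\Gamma)$ and $P_h(\Gamma_h)$ (both follow from the exponential decay \eqref{Detal-pointwise}, \eqref{Detal-pointwise1}), the norm equivalence under the lift, and Lemma~\ref{lemma:geo_perturb}. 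The projection mismatch thus disappears at the level of the identity rather than having to be estimated, which is the main economy. Both arguments rest on the same hypotheses, so the gain is purely structural. One minor remark: your absorption step for the $\|\tilde\delta_{q(x_0)}\|_{L^p(\Gamma)}$ branch is valid, but it can also be bypassed entirely by noting, as the paper does, that $\bar\delta_{x_0}^\ell = a_h^{-1}\tilde\delta_{q(x_0)}$ with $a_h$ uniformly bounded above and below makes the two quantities in the $\min$ equivalent.
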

\begin{proof}
	By the construction of the regularized delta function on $\Gamma$ and $\Gamma_h$, 
	\begin{align}
		&\| \bar\delta_{h,x_0}^\ell - \tilde\delta_{h,q(x_0)} \|_{L^p(\Gamma)} 
		= \| (P_h(\Gah) \bar\delta_{x_0})^\ell - P_h(\Gamma)\tilde\delta_{q(x_0)} \|_{L^p(\Gamma)} \notag\\
		&= \| (P_h(\Gah) \bar\delta_{x_0})^\ell - P_h(\Gamma) (a_h \bar\delta_{x_0}^\ell) \|_{L^p(\Gamma)} 
		 \notag\\
		&\leq \| (P_h(\Gah) \bar\delta_{x_0})^\ell - P_h(\Gamma) \bar\delta_{x_0}^\ell \|_{L^p(\Gamma)} 
		+ \| P_h(\Gamma) (1 - a_h)\bar\delta_{x_0}^\ell \|_{L^p(\Gamma)} 
		 \notag\\
		&\leq \sup_{v_h\in S_h(\Gamma), \| v_h \|_{L^{p^\prime}(\Gamma)} = 1}( (P_h(\Gah) \bar\delta_{x_0})^\ell - P_h(\Gamma) \bar\delta_{x_0}^\ell, v_h)_\Gamma
		+ C \| 1 - a_h \|_{L^\infty(\Gamma)} \| \bar\delta_{x_0}^\ell \|_{L^p(\Gamma)} 
		\notag\\
		&= \sup_{v_h\in S_h(\Gamma), \| v_h \|_{L^{p^\prime}(\Gamma)} = 1}(\bar\delta_{x_0}, P_h(\Gah) (a_h^{-1})^{-\ell} v_h - (a_h^{-1})^{-\ell} v_h)_{\Gamma_h}
		+ C \| 1 - a_h \|_{L^\infty(\Gamma)} \| \bar\delta_{x_0}^\ell \|_{L^p(\Gamma)} 
		\notag\\
		&\leq \sup_{v_h\in S_h(\Gamma), \| v_h \|_{L^{p^\prime}(\Gamma)} = 1}(\bar\delta_{x_0}, P_h(\Gah) (a_h^{-1})^{-\ell} v_h - v_h)_{\Gamma_h} \notag\\
		&\quad+ \sup_{v_h\in S_h(\Gamma), \| v_h \|_{L^{p^\prime}(\Gamma)} = 1}(\bar\delta_{x_0}, v_h - (a_h^{-1})^{-\ell} v_h)_{\Gamma_h} 
		+ C \| 1 - a_h \|_{L^\infty(\Gamma)} \| \bar\delta_{x_0}^\ell \|_{L^p(\Gamma)} 
		\notag\\
		&\leq \sup_{v_h\in S_h(\Gamma), \| v_h \|_{L^{p^\prime}(\Gamma)} = 1} \| P_h(\Gah) ((a_h^{-1})^{-\ell} - 1) v_h \|_{L^{p^\prime}(\Gah)} \| \bar\delta_{x_0} \|_{L^p(\Gah)}  \notag\\
		&\quad+ \sup_{v_h\in S_h(\Gamma), \| v_h \|_{L^{p^\prime}(\Gamma)} = 1} \| ((a_h^{-1})^{-\ell} - 1) v_h \|_{L^{p^\prime}(\Gah)} \| \bar\delta_{x_0} \|_{L^p(\Gah)} 
		+ C h^{k+1} \| \bar\delta_{x_0}^\ell \|_{L^p(\Gamma)} 
		\notag\\
		&\leq C h^{k+1} \| \bar\delta_{x_0} \|_{L^p(\Gah)} , \notag
	\end{align}
	where we have used the $L^\infty$ estimate of $1 - a_h$ (Lemma \ref{lemma:geo_perturb}) and the $L^p$ stability of $P_h(\Gamma_h)$ shown in \eqref{L^p-stability-P_h}.
	Since $\bar\delta_{x_0}^\ell= a_h^{-1} \tilde\delta_{q(x_0)}$, we know that $\| \bar\delta_{x_0} \|_{L^p(\Gah)} $ and $\| \tilde\delta_{q(x_0)} \|_{L^p(\Gamma)} $ are equivalent.
\end{proof}

\subsection{$W^{1,q}$-stability of Ritz projection}
\label{sec:Rh}

We consider the Ritz projection $R_h\colon H^1(\Gamma)\rightarrow S_h(\Gamma)$ associated to the operator $-\Delta_{\Gamma} + 1$ on $\Gamma$, defined as 
\begin{align}
	(\nabla_{\Gamma} (u-R_hu), \nabla_{\Gamma} \varphi_h^\ell)_{\Gamma}
	+
	(u-R_hu, \varphi_h^\ell)_{\Gamma}
	= 0 
\quad\forall \varphi_h^\ell \in S_h(\Gamma) . \notag
\end{align}
From \cite[Theorem 3.2]{Dem09} we know that the Ritz projection defined above is $W^{1,\infty}$ stable.
Consequently the $W^{1,q}$-stability of $R_h$ for $q\in [1,\infty]$ follows from interpolation  (cf.~\cite[Theorem~3.70]{Aubin82}) and duality via Hodge decomposition (cf.~\cite[Eq.~(10.101)]{Gia13}).

\subsection{Dyadic decomposition of the domain ${\mathcal Q}=(0,1)\times\Gamma$}
\label{SecGF} 
In the proof of Theorem \ref{thm:disc_max_reg2}, we decompose the domain ${\mathcal Q}=\Gamma_T$ with $T=1$, i.e.~${\mathcal Q}=(0,1)\times\Gamma$, into subdomains, and present estimates of the finite element solutions in each subdomain. The following dyadic decomposition of ${\mathcal Q}$ was introduced in \cite{Schatz98}. The readers may skip this subsection if they are familiar with such dyadic decompositions.

For any integer $j$, we define $d_j=2^{-j}$.  For a given $x_0\in\Gamma$, 
we let $J_1=1$, $J_0=0$ and $J_*$ be an integer satisfying $2^{-J_*}= C_*h$ 
with $C_*\geq 16$ to be determined later.  
If 
\begin{align}\label{h-condition}
	h<1/(4C_*)  ,
\end{align}
then 
\begin{align}
	2\leq J_*=\log_2[1/(C_*h)]\leq \log_2(2+1/h) .
\end{align} 
Let  
\begin{align*}
	&Q_*(x_0)=\{(t,x)\in\Gamma_T: \max (d(x,x_0),t^{1/2})\leq d_{J_*}\}, 
	\\
	&\Gamma_*(x_0)=\{x\in \Gamma: d(x,x_0) \leq d_{J_*}\} \, . 
\end{align*} 
We define 
\begin{align*} 
	&Q_j(x_0)=\{(t,x)\in \Gamma_T:d_j\leq
	\max (d(x,x_0),t^{1/2})\leq2d_j\}  &&\mbox{for}\,\,\, j\ge 1,
	\\
	&\Gamma_j(x_0)=\{x\in \Gamma: d_j\leq d(x,x_0) \leq2d_j\}   
	&&\mbox{for}\,\,\, j\ge 1,
	\\
	&D_j(x_0)=\{x\in \Gamma:  d(x,x_0) \leq2d_j\}  
	&&\mbox{for}\,\,\, j\ge 1,
\end{align*} 
and 
\begin{align*} 
	&Q_0(x_0)= 
	{\mathcal Q}\big\backslash\big( \cup_{j=1}^{J_*}Q_{j}(x_0)\cup Q_*(x_0)\big) , 
	\\
	&\Gamma_0(x_0) 
	=\Gamma\big\backslash\big( \cup_{j=1}^{J_*}\Gamma_{j}(x_0)\cup \Gamma_*(x_0)\big).
\end{align*}
For $j<0$, we simply define
$Q_{j}(x_0)=\Gamma_{j}(x_0)=\emptyset$.
For all integer $j\ge 0$, we define
\begin{align*}
	&\Gamma_j'(x_0)=\Gamma_{j-1}(x_0)\cup\Gamma_{j}(x_0)\cup\Gamma_{j+1}(x_0),
	\quad Q_j'(x_0)=Q_{j-1}(x_0)\cup Q_{j}(x_0)\cup Q_{j+1}(x_0), \\
	&\Gamma_j''(x_0)=\Gamma_{j-2}(x_0)\cup\Gamma_{j}'(x_0)\cup\Gamma_{j+2}(x_0),
	\quad
	Q_j''(x_0)=Q_{j-2}(x_0)\cup Q_{j}'(x_0)\cup Q_{j+2}(x_0),\\
	&D_j'(x_0)=D_{j-1}(x_0)\cup D_{j}(x_0) ,
	\quad D_j''(x_0)=D_{j-2}(x_0)\cup D_{j}'(x_0) .
\end{align*}
Then we have
\begin{align}\label{decomposition}
	&\Gamma_T=\bigcup^{J_*}_{j=0}Q_j(x_0)\,\cup Q_*(x_0)
	\quad\mbox{and}\quad
	\Gamma=\bigcup^{J_*}_{j=0}\Gamma_j(x_0)\,\cup \Gamma_*(x_0) .
\end{align}
We refer to $Q_*(x_0)$ as the ``innermost" set.
We shall write $\sum_{*,j}$ when the innermost set is included and
$\sum_j$ when it is not. When $x_0$ is fixed, if there is no ambiguity, 
we simply write
$Q_j=Q_j(x_0)$, $Q_j'=Q_j'(x_0)$, $Q_j''=Q_j''(x_0)$, 
$\Gamma_j=\Gamma_j(x_0)$, $\Gamma_j'=\Gamma_j'(x_0)$ and 
$\Gamma_j''=\Gamma_j''(x_0)$.

We shall use the notations
\begin{align}\label{Q-norm}
	&\|v\|_{k,D}=\biggl(\int_D\sum_{|\alpha|\leq
		k}|\partial_{\Gamma}^\alpha v|^2\d x\biggl)^{\frac{1}{2}},\qquad
	\vertiii{v}_{k,Q}=\biggl(\int_Q\sum_{|\alpha|\leq
		k}|\partial_{\Gamma}^\alpha v|^2\d x\d t\biggl)^{\frac{1}{2}} ,
\end{align}
for any subdomains $D\subset\Gamma$ and $Q\subset (0,1)\times\Gamma $.
Throughout this paper, we denote by $C$ a generic positive constant that 
is independent of $h$, $x_0$ and $C_*$ (until $C_*$ is determined 
in Section \ref{section:maximal regularity on stationary surfaces}). To simplify the notations, we also denote $d_*=d_{J_*}$.

\subsection{Local energy estimates}
\label{sec:local}

Since the Gagliardo–Nirenberg interpolation inequality still holds on closed manifold (cf.~\cite[Theorem 3.70]{Aubin82}), the standard energy estimates of heat equation (cf.~\cite[Lemma 2.1 of Chapter III]{Lady88}) are also valid here. Consequently we have the following local estimates of (regularized) Green's function (cf.~\cite[Lemma 4.1]{Li-MCOM-2019} and \cite[Eq. (4.9)]{Li-MCOM-2019} with $\alpha=1$).
\begin{lemma}\label{GFEst1}
	{\it The Green's function $G$ defined in \eqref{GFdef} and the regularized Green's function $H$ defined in \eqref{RGFdef} satisfy the following estimates:
		\begin{align}
			&d_j^{-5+\dimsurf/2}\|\rg(\cdot,\cdot,x_0)\|_{L^\infty(Q_j(x_0))}
			+d_j^{-5}\vertiii{\nabla_{\Gamma} \rg(\cdot,\cdot,x_0)}_{L^2(Q_j(x_0))}
			\nn\\ 
			&
			+d_j^{-4}\vertiii{ \rg(\cdot,\cdot,x_0)}_{L^2H^{2}(Q_j(x_0))}  +d_j^{-2}\vertiii{ \partial_{t} 
				\rg(\cdot,\cdot,x_0)}_{L^2H^{2}(Q_j(x_0))} \nn\\
			& 
			+\vertiii{ \partial_{tt}\rg(\cdot,\cdot,x_0)}_{L^2H^{2}(Q_j(x_0))}
			\leq Cd_j^{-\dimsurf/2-5}
			, \label{GFest01}\\[10pt]
			& 
			\vertiii{\partial_t\rg(\cdot,\cdot,x_0)}_{L^2(Q_j(x_0))}
			+
			d_j\vertiii{\nabla_{\Gamma} \partial_t\rg(\cdot,\cdot,x_0)}_{L^2(Q_j(x_0))}
			+
			d_j^{2}\vertiii{\partial_{tt}\rg(\cdot,\cdot,x_0)}_{L^2(Q_j(x_0))}
			 \notag\\
			&\quad
			+
			d_j^{3}\vertiii{\nabla_{\Gamma} \partial_{tt}\rg(\cdot,\cdot,x_0)}_{L^2(Q_j(x_0))}
			\leq Cd_j^{-\dimsurf/2-1}
			, \label{GFest02}\\[10pt]
			&\|G(\cdot,\cdot,x_0) \|_{L^{\infty}H^{2}(\cup_{k\leq
					j}Q_k(x_0))}
			+d_j^2\|\partial_tG(\cdot,\cdot,x_0) \|_{L^{\infty}H^{2}(\cup_{k\leq
					j}Q_k(x_0))}\leq Cd_j^{-\dimsurf/2-2} \label{GFest03} .
		\end{align}
%
	}
\end{lemma}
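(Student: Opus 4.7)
The plan is to combine the pointwise Gaussian estimates \eqref{GausEst1}--\eqref{GausEstGamma} with parabolic Caccioppoli-type local energy estimates on the dyadic shells $Q_j$. Since the lemma is asserted for $j\le J_*$, we have $d_j\ge d_{J_*}\ge 16h$, so the shells $Q_j$ sit well away from the support of $\tilde\delta_{x_0}$, and $H(\cdot,\cdot,x_0)$ (together with $\partial_tH,\partial_{tt}H$) is smooth on a slightly inflated shell $Q_j''$ and satisfies the homogeneous heat equation $\partial_t w=\Delta_{\Gamma} w$ there.

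First I would handle the estimates that involve no spatial derivatives. On $Q_j$ one has $\max(d(x,x_0),\sqrt{t})\sim d_j$, so \eqref{GausEstGamma} yields the uniform pointwise bound $|\partial_t^{k}H|\lesssim d_j^{-\dimsurf-2k}$ (the Gaussian factor and the factor $e^{C_kt}$ are both absorbed into a harmless constant). Multiplying by $|Q_j|^{1/2}\lesssim d_j^{(\dimsurf+2)/2}$ then gives directly the $\|H\|_{L^\infty(Q_j)}$, $\|\partial_tH\|_{L^2(Q_j)}$ and $\|\partial_{tt}H\|_{L^2(Q_j)}$ pieces of \eqref{GFest01}--\eqref{GFest02}; in particular each time derivative costs a factor $d_j^{-2}$ and the base $L^2$-scale is $d_j^{-\dimsurf/2+1}$.

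Next, to produce the spatial-derivative estimates (which the time-derivative Gaussian bound does not directly deliver), I would pick a space-time cutoff $\eta$ supported in $Q_j''$ with $\eta\equiv 1$ on $Q_j$ and $|\partial_t\eta|+d_j|\nabla_\Gamma\eta|\lesssim d_j^{-2}$. Testing $\partial_tw=\Delta_\Gamma w$ with $\eta^2 w$ and integrating by parts yields the standard Caccioppoli bound $\vertiii{\nabla_\Gamma w}_{L^2(Q_j)}\lesssim d_j^{-1}\vertiii{w}_{L^2(Q_j')}$, and iterating once while exploiting the elliptic identity $\Delta_\Gamma w=\partial_t w$ on $Q_j'$ produces
\begin{align*}
\vertiii{w}_{L^2H^{2}(Q_j)}\lesssim \vertiii{\partial_tw}_{L^2(Q_j')}+d_j^{-2}\vertiii{w}_{L^2(Q_j'')}.
\end{align*}
Applied successively with $w=H,\partial_tH,\partial_{tt}H$ and substituting the already-established pure $L^2$ bounds, this delivers the remaining pieces of \eqref{GFest01}--\eqref{GFest02}, including the mixed quantities $\|\nabla_\Gamma\partial_tH\|_{L^2(Q_j)}$ and $\|\nabla_\Gamma\partial_{tt}H\|_{L^2(Q_j)}$. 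For \eqref{GFest03} I would apply interior elliptic regularity for $-\Delta_\Gamma$ on each time slice, $\|G(t,\cdot,x_0)\|_{H^2(D_k)}\lesssim \|G(t,\cdot,x_0)\|_{L^2(D_k')}+\|\partial_tG(t,\cdot,x_0)\|_{L^2(D_k')}$, and bound the right-hand side via the Gaussian estimates \eqref{GausEst1}, then take the supremum in $t$; differentiating once in $t$ and repeating gives the companion bound with the factor $d_j^2$.

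The principal technical care is in the bookkeeping of the cutoff/inflation argument: the nested shells $Q_j\subset Q_j'\subset Q_j''$ have to be arranged so that derivatives of $\eta$ introduce exactly the correct negative powers of $d_j$, every error term from the integration by parts must be absorbable into a bound already proved at an earlier stage of the induction on the number of derivatives, and the Gaussian-based pointwise estimate must be invoked only on regions on which it holds (which is where the condition $d_j\ge 2h$ is crucial). Once this scaffolding is set up, the proof proceeds by a fairly mechanical iteration on the order of derivatives.
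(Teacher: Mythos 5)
Your proposal is essentially correct, and it follows the standard strategy that the paper itself defers to (the paper gives no self-contained proof of Lemma~\ref{GFEst1}, simply citing \cite[Lemma 4.1, Eq.~(4.9)]{Li-MCOM-2019} together with the remark that Gagliardo--Nirenberg and the basic parabolic energy estimates carry over to the closed surface). Your split of the work -- Gaussian pointwise bounds \eqref{GausEst1}/\eqref{GausEstGamma} for the pure $L^\infty$ and $L^2$ quantities of $\partial_t^k\rg$, then parabolic Caccioppoli on nested shells with cutoffs for $\nabla_\Gamma\partial_t^k\rg$, then interior $H^2$ regularity (combining the elliptic identity $\Delta_\Gamma w=\partial_t w$ with another inflation of the shell) for the $L^2H^2$ quantities, and finally slicewise interior elliptic regularity for \eqref{GFest03} -- is consistent with the constants in the lemma; a quick bookkeeping check confirms that the base scaling $|\partial_t^k\rg|\lesssim d_j^{-\dimsurf-2k}$ on $Q_j$, combined with $|Q_j|\lesssim d_j^{\dimsurf+2}$ and the factor $d_j^{-1}$ per Caccioppoli step or $d_j^{-2}$ per application of the elliptic identity, reproduces all the exponents in \eqref{GFest01}--\eqref{GFest03}. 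You also correctly flag that the restriction $j\le J_*$ guarantees $d_j\ge C_*h\ge 16h>2h$, which is exactly the regime in which \eqref{GausEstGamma} applies for the regularized kernel $\rg$. In short, nothing is missing at the level of ideas; the remaining work is the mechanical inflation/induction you describe.
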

\begin{remark}\label{rmk:high_est}
	The spatial high-order counterpart of \eqref{GFest03}, i.e.
	\begin{align}
		\|G(\cdot,\cdot,x_0) \|_{L^{\infty}H^{k}(\cup_{k\leq
				j}Q_k(x_0))}
		\leq Cd_j^{-\dimsurf/2-k} , \quad k\in \N, \notag
	\end{align}
	 follows from a standard induction argument by differentiating the localized equation in the spatial direction. Note that on the surface the surface derivatives do not commute with each other (cf. \cite[Lemma 5.1, Item 4]{BL22B}). However the commutator is of lower order and therefore dose not affect the dominant stability in the energy estimates.
\end{remark}

We also need the following local energy estimate for the error equation, which can be proved in a similar way as \cite[Lemma 5.1]{Li-MCOM-2019}. The proof of Lemma \ref{LocEEst} can be found in Appendix.
\begin{lemma}\label{LocEEst} 
	{\it 
		Suppose that $\phi\in L^2(0,T;H^1_0(\Gamma))\cap H^1(0,T;L^2(\Gamma))$ and 
		$\phi_h^\ell\in H^1(0,T; S_h(\Gamma))$ satisfy the equation 
		\begin{align}\label{eq:err_eq_1}
			(\partial_t\phi-a_h\partial_t\phi_h^\ell),\chi_h^\ell)_\Gamma
			+(\nabla_\Gamma \phi - B_h\nabla_\Gamma \phi_h^\ell,\nabla_{\Gamma} \chi_h^\ell)_\Gamma =0, 
			\quad\forall\, \chi_h^\ell\in  S_h(\Gamma) , \,\, \mbox{a.e.}\,\, t>0, 
		\end{align} 
		with $\phi(0)=0$ in $\Gamma_j''$. 
		Then 
		\begin{align} 
			\hspace{-8pt}
			&\vertiii{\partial_t(\phi-\phi_h^\ell)}_{Q_j} 
			+ d_j^{-1}\vertiii{\phi-\phi_h^\ell}_{1,Q_j} \notag\\ 
			& \leq 
			C\epsilon^{-3}\big(I_j(\phi_h^\ell(0))+X_j(I_h\phi-\phi) 
			+ Y_j(\phi) 
			+d_j^{-2} \vertiii{\phi-\phi_h^\ell}_{Q_j'}\big) \nn\\
			&\quad 
			+ (C h^{1/2}d_j^{-1/2} + C\epsilon^{-1}hd_j^{-1} + \epsilon) \big(
			\vertiii{\partial_t(\phi-\phi_h^\ell)}_{Q_j'}
			+d_j^{-1}\vertiii{\phi-\phi_h^\ell}_{1,Q_j'}\big), 
			\label{LocEngErr} 
		\end{align} 
		where
		\begin{align*}
			&I_j(\phi_{h}^\ell(0))=\|\phi_h^\ell(0)\|_{1,\Gamma_j'} + d_j^{-1}\|\phi_h^\ell(0)\|_{\Gamma_j'} \, ,\\[5pt]
			&X_j(I_h\phi-\phi)=
			d_j\vertiii{\partial_t(I_h\phi-\phi)}_{1,Q_j'}
			+\vertiii{\partial_t(I_h\phi-\phi)}_{Q_j'} 
			\nn\\
			&\qquad\qquad\qquad\,\, 
			+d_j^{-1}\vertiii{I_h\phi-\phi}_{1,Q_j'}+
			d_j^{-2}\vertiii{I_h\phi-\phi}_{Q_j'}  \, , \\
			& Y_j(\phi) = 
				h^{k+1}
				\bigg(d_j \vertiii{\nabla_\Gamma \partial_t \phi}_{Q_j'}
				+  \vertiii{\partial_t \phi}_{Q_j'}
				+ d_j^{-1} \vertiii{\nabla_\Gamma \phi}_{Q_j'}
				+ d_j^{-2} \vertiii{\phi}_{Q_j'}\bigg)
			 .
		\end{align*}
		The positive constant $C$ is independent of $h$, $j$ and $C_*$; the norms $\vertiii{\cdot}_{k,Q_j'}$ and $\vertiii{\cdot}_{k,\Gamma_j'}$ are defined in \eqref{Q-norm}. 
		
	}
\end{lemma}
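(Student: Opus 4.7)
\medskip

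\noindent\textbf{Proof plan for Lemma \ref{LocEEst}.}
The plan is to adapt the Schatz--Wahlbin--Thom\'ee cut-off argument as realised in \cite[Lemma 5.1]{Li-MCOM-2019}, with two extra ingredients: the geometric perturbations $a_h-1$ and $B_h-I$ coming from the surface discretisation, and the super-approximation (P3) used to pass test functions through the interpolation operator. First I would rewrite the error equation \eqref{eq:err_eq_1} by isolating the identity part from the geometric perturbations:
\begin{align*}
(\partial_t(\phi-\phi_h^\ell),\chi_h^\ell)_\Gamma+(\nabla_\Gamma(\phi-\phi_h^\ell),\nabla_\Gamma\chi_h^\ell)_\Gamma
= ((a_h-1)\partial_t\phi_h^\ell,\chi_h^\ell)_\Gamma+((B_h-I)\nabla_\Gamma\phi_h^\ell,\nabla_\Gamma\chi_h^\ell)_\Gamma,
\end{align*}
so that by Lemma \ref{lemma:geo_perturb} the right-hand side carries a factor $O(h^{k+1})$. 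Write $e=\phi-\phi_h^\ell$ and decompose it as $e=(\phi-R_h\phi)+(R_h\phi-\phi_h^\ell)=:\eta+\theta_h^\ell$ using the Ritz projection $R_h$ from Section \ref{sec:Rh}; the $\eta$-contribution will generate the interpolation-type term $X_j(I_h\phi-\phi)$.

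Next I would introduce a smooth space-time cut-off $\omega(t,x)$ supported in $Q_j'$ with $\omega\equiv 1$ on $Q_j$ and parabolic-scaled derivative bounds $|\partial_x^\alpha\partial_t^\beta\omega|\le C d_j^{-|\alpha|-2|\beta|}$. Testing with $\chi_h^\ell=I_h(\omega^2\partial_t\theta_h^\ell)$, the differences $\omega^2\partial_t\theta_h^\ell-I_h(\omega^2\partial_t\theta_h^\ell)$ and $\omega^2\theta_h^\ell-I_h(\omega^2\theta_h^\ell)$ are controlled by the super-approximation property (P3), producing the characteristic factor $hd_j^{-1}$ appearing in the last bracket of \eqref{LocEngErr}. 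Rearranging yields an identity of the schematic form
\begin{align*}
\|\omega\partial_t\theta_h^\ell\|_\Gamma^2+\tfrac12\tfrac{d}{dt}\|\omega\nabla_\Gamma\theta_h^\ell\|_\Gamma^2
= (\text{commutator terms in }\omega)+(\text{Ritz defect})+(\text{geometric perturbation}),
\end{align*}
the first block giving $d_j^{-2}\vertiii{e}_{Q_j'}^2$ after Poincar\'e/Cauchy--Schwarz on the annulus, the second giving $X_j$, and the third being the new contribution specific to the surface FEM.

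To treat the geometric perturbation I substitute $\partial_t\phi_h^\ell=\partial_t\phi-\partial_t e$ and $\nabla_\Gamma\phi_h^\ell=\nabla_\Gamma\phi-\nabla_\Gamma e$. The pieces involving $\partial_t\phi$ and $\nabla_\Gamma\phi$ — together with the $\nabla_\Gamma(\omega^2)$ and $\partial_t(\omega^2)$ factors produced when moving derivatives onto the cut-off — assemble precisely into the $Y_j(\phi)$ term with its $h^{k+1}$ prefactor, using $\|a_h-1\|_{L^\infty}+\|B_h-I\|_{L^\infty}\le Ch^{k+1}$. The remaining pieces involving $\partial_t e$ and $\nabla_\Gamma e$ carry the same $h^{k+1}$ factor and so are subsumed into the $\epsilon$-bracket on $Q_j'$ via Young's inequality; the discontinuity of $B_h$ across element edges is harmless because only $L^\infty$-bounds are needed. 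Integrating in $t$ from $0$ to $T$, the initial-data boundary term is supported on $\Gamma_j'$, and since $\phi(0)=0$ on $\Gamma_j''$ only $\phi_h^\ell(0)$ contributes, giving $I_j(\phi_h^\ell(0))$.

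Finally, to close the estimate I apply Cauchy--Schwarz with a small parameter $\epsilon$ to absorb $\vertiii{\partial_t e}_{Q_j'}$ and $d_j^{-1}\vertiii{e}_{1,Q_j'}$ back into the left-hand side on $Q_j$. Because the cross terms between $\partial_t\theta_h^\ell$, $\nabla_\Gamma\theta_h^\ell$, $\eta$ and the perturbation remainders have to be iterated three times (once for the time-derivative identity, once for the gradient identity, once to pass from $\theta_h^\ell$ back to $e$), the final factor multiplying $X_j$, $Y_j$, $I_j$ and $d_j^{-2}\vertiii{e}_{Q_j'}$ is $C\epsilon^{-3}$. I expect the main obstacle to be the bookkeeping of the geometric perturbation term: one must verify that after distributing the $O(h^{k+1})$ factor the resulting bounds involve only $\phi$ (yielding $Y_j$) or come with an extra $h^{k+1}$ small enough to be absorbed, rather than degrading the $hd_j^{-1}$ super-approximation scale that controls the final iteration in Lemma \ref{LemGm2}.
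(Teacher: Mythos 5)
Your plan is broadly in the Schatz--Wahlbin spirit, and the treatment of the geometric perturbation pieces ($a_h-1$, $B_h-I$) is on the right track: substituting $\phi_h^\ell = \phi - e$ inside the perturbation terms and letting the $O(h^{k+1})$ factor produce $Y_j(\phi)$ while the $e$-pieces get absorbed into the $\epsilon$-bracket is exactly what is needed. However, there is a genuine gap in the decomposition you choose. You split $e=(\phi-R_h\phi)+(R_h\phi-\phi_h^\ell)$ using the \emph{Ritz projection}, and then assert that "the $\eta$-contribution will generate $X_j(I_h\phi-\phi)$." That does not follow: $R_h$ is a global elliptic projection, so $\eta=\phi-R_h\phi$ restricted to $Q_j'$ depends on $\phi$ on all of $\Gamma$, not just on $Q_j''$. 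Consequently you cannot bound the $\eta$-terms by the local quantity $X_j(I_h\phi-\phi)$ that appears in the lemma. To obtain a localizable splitting you must use a local operator, namely the interpolation $I_h$ (whose value in $D$ depends only on data in $B_d(D)$ by property (P3)(2)), which is exactly what the statement of the lemma and the paper's own proof use.

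The paper's route is also structurally different from your direct cut-off-test-function argument. Instead of testing with $I_h(\omega^2\partial_t\theta_h^\ell)$, the paper first forms a localized continuous problem $\tilde\phi=\tilde\omega\phi$ supported in $Q_j''$, defines an auxiliary FEM solution $\tilde\phi_h^\ell$ of that localized problem with the same perturbed bilinear forms, and then splits $\phi-\phi_h^\ell=(\tilde\phi-\tilde\phi_h^\ell)+(\tilde\phi_h^\ell-\phi_h^\ell)$ on $Q_j$. The first summand is estimated by global parabolic energy estimates tested with $I_h\tilde\phi-\tilde\phi_h^\ell$ and $\partial_t(I_h\tilde\phi-\tilde\phi_h^\ell)$, producing the $X_j$ and $Y_j$ contributions; the second summand is a discrete function satisfying a locally \emph{homogeneous} perturbed equation, and a precomputed local estimate (Lemma~\ref{lemma:loc_est_2}) yields the $I_j$, $d_j^{-2}\vertiii{e}_{Q_j'}$, super-approximation, and $\epsilon$-bracket terms. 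Your direct testing idea could be made to work, but you would essentially be reproving Lemma~\ref{lemma:loc_est_2} from scratch; the more pressing fix is to replace $R_h$ by $I_h$ throughout so that every intermediate term is genuinely local. Finally, your "three iterations" explanation of the $\epsilon^{-3}$ prefactor is a reasonable heuristic, but as stated it is not backed by a concrete computation; in the paper's route the $\epsilon^{-1}$ already enters through Lemma~\ref{lemma:loc_est_2}, and the additional powers come from iterating the absorption on $Q_j'$, which you should track explicitly rather than assert.
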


\begin{remark}
The term $Y_j(\phi)$ is generated by approximating $\Gamma$ with $\Gamma_h$. 
\end{remark}

\section{Discrete maximal regularity on a stationary surface\\
(Proof of Theorem \ref{thm:disc_max_reg2})}
\label{section:maximal regularity on stationary surfaces}
\setcounter{equation}{0}

We shall prove the following key lemma using the local energy estimates in Section \ref{sec:local}.
\begin{lemma}\label{LemGm2}
	{\it
		The functions $\rg_h(t,x,x_0)$, $\rg(t,x,x_0)$ and 
		$F(t,x,x_0):=\rg_h^\ell(t,x,x_0)-\rg(t,x,x_0)$ satisfy 
		\begin{align}
			&\sup_{x_0\in\Gamma} \|\partial_tF(\cdot,\cdot ,x_0)\|_{L^1((0,+\infty)\times\Gamma)}  
			+ \sup_{x_0\in\Gamma} \|t\partial_{tt}F(\cdot,\cdot , x_0)\|_{L^1((0,+\infty)\times\Gamma)} \leq C ,
			\label{L1Ft}\\
			&
			\sup_{x_0\in\Gamma_h} \|\partial_t\rg_h(t,\cdot, x_0)\|_{L^1(\Gah)}\leq Ce^{-\lambda_0t} , \qquad\forall\, t\ge 1, \label{L1Gammatx0}
		\end{align} 
		where the constants $C>0$ and $\lambda_0 \geq 0$ are independent of $h$. 
	}
\end{lemma}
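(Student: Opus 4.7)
The plan is to prove the two estimates separately. For \eqref{L1Ft} the strategy is to combine the dyadic decomposition of Section~\ref{SecGF} with the local energy estimate in Lemma~\ref{LocEEst} applied to the error $F=\rg_h^\ell-\rg$ (which, by \eqref{RGFdef} and \eqref{EqGammh}, satisfies the error equation \eqref{eq:err_eq_1} with $\phi=\rg$ and $\phi_h^\ell=\rg_h^\ell$), together with the Gaussian bounds on $\rg$ in Lemma~\ref{GFEst1}. For \eqref{L1Gammatx0} and the long-time tail of \eqref{L1Ft}, the strategy is to exploit mass conservation of the discrete heat flow on the closed surface $\Gah$ and the $h$-uniform spectral gap of the discrete Laplacian on the mean-zero subspace of $S_h(\Gah)$.

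For \eqref{L1Ft} restricted to $(0,1]$, I would split $(0,1]\times\Gamma$ into the innermost set $Q_*(x_0)$ and the dyadic annuli $Q_j(x_0)$, $0\le j\le J_*$. On each annulus with $d_j\ge C_*h$, Cauchy--Schwarz gives
\[
\|\partial_t F\|_{L^1(Q_j)} \le |Q_j|^{1/2}\,\vertiii{\partial_t F}_{Q_j},\qquad |Q_j|\sim d_j^{\dimsurf+2},
\]
so it suffices to bound the $L^2$-energy $\vertiii{\partial_t F}_{Q_j}+d_j^{-1}\vertiii{F}_{1,Q_j}$. Lemma~\ref{LocEEst} controls this by $I_j(\rg_h^\ell(0))+X_j(I_h\rg-\rg)+Y_j(\rg)$ plus a small multiple of the same energy on the enlarged set $Q_j'$. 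Inserting \eqref{GFest01}--\eqref{GFest03}, the interpolation estimate $\|I_h\rg-\rg\|_{H^\sigma(D)}\le Ch^{2-\sigma}\|\rg\|_{H^2(D)}$ for $\sigma\in\{0,1\}$, and the initial-data bound supplied by Lemma~\ref{lemma:delta} together with the exponential decay \eqref{Detal-pointwise}--\eqref{Detal-pointwise1}, each of these terms scales like $d_j^{-\dimsurf/2-1}$ up to benign $h/d_j$ prefactors. Fixing $\epsilon$ small and then $C_*$ large so that the multiplicative factor $(Ch^{1/2}d_j^{-1/2}+C\epsilon^{-1}hd_j^{-1}+\epsilon)$ in Lemma~\ref{LocEEst} is strictly below one for every admissible $j$, a finite iteration in the index $j$ closes the energy bound. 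Multiplying by $|Q_j|^{1/2}\sim d_j^{(\dimsurf+2)/2}$ and summing the $\lesssim|\log h|$ annuli then yields a uniform bound; the innermost set $Q_*(x_0)$ is handled by a direct $L^2$ stability estimate combined with the inverse inequality and $|Q_*|\sim h^{\dimsurf+2}$, and the $t\partial_{tt}F$ term is treated the same way using \eqref{GFest02} in place of \eqref{GFest01}.

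For \eqref{L1Gammatx0} and the range $t\ge 1$ in \eqref{L1Ft}, I would observe that $\int_{\Gah}\rg_h(t,\cdot,x_0)\,\d x=\int_{\Gah}\bar\delta_{h,x_0}\,\d x=1$ is conserved by the discrete heat flow on the closed surface $\Gah$, so $\partial_t\rg_h(t,\cdot,x_0)$ lies in the mean-zero subspace of $S_h(\Gah)$. The discrete Laplacian $\Delta_{\Gah,h}$ has an $h$-uniform lowest positive eigenvalue $\lambda_0>0$ on that subspace, so
\[
\|\partial_t\rg_h(t,\cdot,x_0)\|_{L^2(\Gah)}\le e^{-\lambda_0(t-1)}\|\partial_t\rg_h(1,\cdot,x_0)\|_{L^2(\Gah)},\qquad t\ge 1,
\]
and Cauchy--Schwarz with $|\Gah|\le C$, combined with uniform bounds on $\|\partial_t\rg_h(1,\cdot,x_0)\|_{L^2(\Gah)}$ coming from parabolic smoothing on a unit time interval, give the required $L^1$ exponential decay. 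The analogous exponential decay of $\partial_t F$ on $t\ge 1$ completes \eqref{L1Ft}.

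The main obstacle, in my view, is the simultaneous tuning of $\epsilon$, $C_*$, and $h_0$ in the dyadic iteration so that the factor $(Ch^{1/2}d_j^{-1/2}+C\epsilon^{-1}hd_j^{-1}+\epsilon)$ of Lemma~\ref{LocEEst} is uniformly absorbable while the geometric perturbation term $Y_j(\rg)$ (carrying the $h^{k+1}$ prefactor together with high-order derivatives of $\rg$) remains summable after multiplication by $|Q_j|^{1/2}$; a secondary subtlety is the initial-data mismatch $\rg_h^\ell(0)=\bar\delta_{h,q^{-1}(x_0)}^\ell\ne\tilde\delta_{x_0}=\rg(0)$, for which Lemma~\ref{lemma:delta} together with the pointwise bounds \eqref{Detal-pointwise}--\eqref{Detal-pointwise1} is indispensable in order to control $I_j(\rg_h^\ell(0))$ with the correct scaling in $d_j$ and $h$.
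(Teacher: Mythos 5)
Your outline for the long-time regime ($t\ge1$) is essentially the paper's argument (mass conservation, mean-zero subspace, $h$-uniform spectral gap), but even there you gloss over a nontrivial point: the ``uniform bound on $\|\partial_t\rg_h(1,\cdot,x_0)\|_{L^2(\Gah)}$ from parabolic smoothing'' is not an $L^2\!\to\!L^2$ smoothing statement (the initial datum has $L^2$-norm $\sim h^{-\dimsurf/2}$); it is an $L^1\!\to\!L^2$ smoothing statement that the paper obtains from the bound $\|\Delta_{\Gah,h}^{-1} g_h\|_{L^2}\le C\|g_h\|_{L^1}$, which in turn rests on elliptic regularity and the $W^{1,q}$-stability of the Ritz projection on the closed surface (Section~\ref{sec:Rh}). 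Without this discrete $L^1$-regularity input, the step from $t=1$ onward is not closed.

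The more serious gap is in the short-time part. Lemma~\ref{LocEEst} does not bound $\vertiii{\partial_t F}_{Q_j}+d_j^{-1}\vertiii{F}_{1,Q_j}$ only by $I_j+X_j+Y_j$ plus an absorbable multiple of the same energy on $Q_j'$; it also produces the lower-order term $C\epsilon^{-3}\,d_j^{-2}\vertiii{F}_{Q_j'}$, which carries the \emph{large} prefactor $\epsilon^{-3}$ and cannot be absorbed by taking $\epsilon$ small and $C_*$ large. After inserting the weights $d_j^{1+\dimsurf/2}$ and summing over $j$, this contributes $C\epsilon^{-3}\sum_j d_j^{-1+\dimsurf/2}\vertiii{F}_{Q_j}$, and the naive energy bound $\vertiii{F}_{Q_j}\lesssim d_j^{1-\dimsurf/2}$ yields only $C\epsilon^{-3}|\log h|$, not a uniform bound. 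The paper therefore needs a genuinely separate ingredient: a parabolic duality argument (solving a backward heat equation with right-hand side supported in $Q_j$, testing against $F$, and decomposing into the terms $\mathcal I_1,\dots,\mathcal I_{11}$) to show the sharper estimate $\vertiii{F}_{Q_j}\le Ch^2 d_j^{-1-\dimsurf/2}+\dots$, i.e.\ $\vertiii{F}_{Q_j}$ carries an extra $(h/d_j)^2$ gain. This is where Lemma~\ref{lemma:delta} is actually used (to estimate $\mathcal I_3$, the mismatch of the two discrete delta functions), not — as you suggest — in estimating $I_j(\rg_h^\ell(0))$, which follows from the exponential decay \eqref{Detal-pointwise1} alone. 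Your proposal omits the duality step entirely, and consequently the claimed summability ``over the $\lesssim|\log h|$ annuli'' does not follow: each annulus contributes $O(1)$, not $O(h/d_j)$, from the $\vertiii{F}_{Q_j'}$ term.

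Finally, a minor point of mechanism: the absorption in the paper is not a ``finite iteration in $j$'' (which would move the domain from $Q_j$ to $Q_j'$ to $Q_j''$ indefinitely); it works by summing the local estimates with weights $d_j^{1+\dimsurf/2}$ to form a single global quantity $\mathscr K$ and then absorbing $(C_\epsilon C_*^{-1/2}+CC_*^{-1}+\epsilon)\mathscr K$ into the left side after the choice of $\epsilon$ and $C_*$.
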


To simplify the notation, in this subsection we continue to omit the dependence of surface $\Gamma(s)$ on $s\in[0,T]$. Additionally, we relax the dependence of the Green's functions on $x_0\in\Gamma$ by denoting  
$$
\rg_h^\ell(t)=\rg_h^\ell(t,\cdot,  x_0),\qquad
\rg(t)=\rg(t,\cdot,  x_0)\quad\mbox{and}\quad
F(t)=\rg_h^\ell(t) -\rg(t) . 
$$ 
Since the coefficient $a_h$ and $B_h$ are bounded everywhere, the same proofs in \cite[Section 4.3--4.4]{Li-MCOM-2019} using Lemma \ref{LemGm2} can be carried over almost verbatim here. 
For example, by differentiating \eqref{eq:conv_uh} in time, we obtain
\begin{align}
	\partial_t u_h(t,q^{-1}(x_0)) 
	&=
	\int_0^t \int_\Gamma a_h(x) \partial_t F(t-s,x,x_0) f_h^\ell(s,x) \d x \d s 
	\notag\\
	&\quad+
	\int_0^t \int_\Gamma a_h(x) \partial_t \rg(t-s,x,x_0) f_h^\ell(s,x) \d x \d s 
	+
	a_h(x_0) f_h^\ell(t,x_0)
	\notag\\
	&=: \mathcal M_h (a_h f_h^\ell) + \mathcal K_h (a_h f_h^\ell) + a_h f_h^\ell ,\quad x_0\in \Gamma \notag,
\end{align}
where $\mathcal M_h$ and $\mathcal K_h$ are the linear operators whose kernel are $\partial_t F$ and $\partial_t \rg$ respectively. Therefore, by using Lemma \ref{LemGm2} and the arguments in \cite[Section 4.3--4.4]{Li-MCOM-2019}, we can obtain the following result for $p,q\in [1,\infty)$:
\begin{align}
	\| \partial_t u_h^\ell \|_{L^p(\R_+; L^q(\Gamma))}
	\leq
	C \| a_h f_h^\ell \|_{L^p(\R_+; L^q(\Gamma))}
	\leq
	C \| f_h^\ell \|_{L^p(\R_+; L^q(\Gamma))} . \notag
\end{align}
This proves the maximal regularity result in Theorem \ref{thm:disc_max_reg2} (by choosing $\Gamma=\Gamma(s)$ and $\Gah=\Gahs$ for any fixed $s\in [0,T]$).
\begin{remark}
	Analogous to \cite[Section 4.2]{Li-MCOM-2019}, we also have the analyticity and maximum-norm stability (or discrete maximum principle, for the continuous maximum principle on closed manifold $\Gamma$; see \cite[Theorem 15.2]{Li12}) results for the discrete heat semigroup on $\Gah$:
	The semi-discrete finite element solution $u_h=E_h(t)u_{h,0}$ of equation
	\begin{align}\notag
		\left\{
		\begin{aligned}
			&(\partial_t u_h(t),\varphi_h)_{\Gamma_h} + (\nabla_{\Gamma_h} u_h(t),\nabla_{\Gamma_h} \varphi_h)_{\Gamma_h} 
			=
			0
			&&\forall\, \varphi_h\in S_h,  \\ 
			&u_h(0)= u_{h,0} &&\mbox{on}\,\,\,\Gamma  ,
		\end{aligned}
		\right.	
	\end{align}
	satisfies the following estimates for $q\in[1,\infty)$: 
	\begin{align}
		&\sup_{t> 0} (\| E_h(t) u_{h,0} \|_{L^q(\Gah)} + t \| \partial_t E_h(t) u_{h,0} \|_{L^q(\Gah)} )
		\leq
		C \| u_{h,0} \|_{L^q(\Gah)} , \notag\\
		&\sup_{t> 0} \| E_h(t) u_{h,0} \|_{L^\infty(\Gah)}
		\leq
		C \| u_{h,0} \|_{L^\infty(\Gah)} . \notag\\
	\end{align}
\end{remark}



In order to prove Lemma \ref{LemGm2}, we apply the local energy estimate in Lemma \ref{LocEEst} to estimate $\| \partial_t F\|_{L^1((0,+\infty)\times \Gamma)} + \| t\partial_{tt} F\|_{L^1((0,+\infty)\times \Gamma)} $. The estimation consists of two parts: The first part concerns estimates for $t \in (0, 1)$, and the second part concerns estimates for $t\ge 1$, which is a consequence of the smoothing property of parabolic equations. 

\medskip

{\it Part I.}$\,\,\,$ 
First, we present estimates in the domain ${\mathcal Q}=(0,1)\times\Gamma$ with the restriction $h<1/(4C_*) $; see \eqref{h-condition}. In this case, the basic energy estimate gives 
\begin{align} 
&\|\partial_t\rg\|_{L^2({\mathcal Q})}
+\|\partial_t\rg_h^\ell\|_{L^2({\mathcal Q})}
\leq C(\|\rg(0)\|_{H^1} +\|\rg_h^\ell(0)\|_{H^1})
\leq Ch^{-1-\dimsurf/2} ,  \label{EstGamma-Q1} \\
&\|\rg\|_{L^\infty L^2({\mathcal Q})} 
+\|\rg_h^\ell\|_{L^\infty L^2({\mathcal Q})} 
\leq C(\|\rg(0)\|_{L^2} +\|\rg_h^\ell(0)\|_{L^2})
\leq Ch^{-\dimsurf/2} ,\\
&\|\nabla_{\Gamma} \rg\|_{L^2({\mathcal Q})} +\|\nabla_{\Gamma} \rg_h^\ell\|_{L^2({\mathcal Q})}
\leq C(\|\rg(0)\|_{L^2} +\|\rg_h^\ell(0)\|_{L^2})
\leq Ch^{-\dimsurf/2} , \label{EstGamma-Q3} \\
&\|\partial_{tt}\rg\|_{L^2({\mathcal Q})}
+\|\partial_{tt}\rg_h^\ell\|_{L^2({\mathcal Q})}
\leq C(\|\Delta_{\Gamma}\rg(0)\|_{H^1} +\|\Delta_{\Gamma,h}\rg_h^\ell(0)\|_{H^1})
\leq Ch^{-3-\dimsurf/2} ,  \label{EstGamma-Q4}  \\
&\|\nabla_{\Gamma} \partial_t\rg\|_{L^2({\mathcal Q})}
+\|\nabla_{\Gamma} \partial_t\rg_h^\ell\|_{L^2({\mathcal Q})}
\leq C(\|\Delta_{\Gamma}\rg(0)\|_{L^2} +\|\Delta_{\Gamma,h}\rg_h^\ell(0)\|_{L^2})
\leq Ch^{-2-\dimsurf/2} , \label{EstGamma-Q5}
\end{align} 
where we have used \eqref{reg-Delta-est} and \eqref{Detal-pointwise} to estimate $\rg(0)$ and $\rg_h(0)$, respectively. Hence, we have 
\begin{align}\label{L2Gamma-Q}
\vertiii{\rg}_{Q_*} +\ll\rg_h^\ell\ll_{Q_*} 
\le Cd_*\|\rg\|_{L^\infty L^2(Q_*)}+Cd_*\|\rg_h^\ell\|_{L^\infty L^2(Q_*)}
\le Cd_* h^{-\dimsurf/2} \le CC_*h^{1-\dimsurf/2} .
\end{align} 
Since the volume of $Q_j$ is $Cd_j^{2+\dimsurf}$, we can decompose 
$ \|\partial_tF\|_{L^1(\mathcal Q)}+\|t\partial_{tt}F\|_{L^1(\mathcal Q)}$ in the following way:
\begin{align}\label{Bd31K2}
&\|\partial_tF\|_{L^1(\mathcal Q)}+\|t\partial_{tt}F\|_{L^1(\mathcal Q)}  \nn\\
&\leq  \|\partial_{t} F\|_{L^1(Q_*)}
+ \|t\partial_{tt} F\|_{L^1(Q_*)} 
+\sum_{j}\big(\|\partial_{t} F\|_{L^1(Q_j)}
+ \|t\partial_{tt} F\|_{L^1(Q_j)}
\big)  \nn\\
&\leq Cd_{*}^{1+\dimsurf/2} \big(\vertiii{\partial_tF}_{Q_*} 
+ d_*^2\vertiii{\partial_{tt} F}_{Q_*}
\big) 
+\sum_{j}Cd_{j}^{1+\dimsurf/2}
\big(\vertiii{\partial_tF}_{Q_j} 
+d_j^2\vertiii{\partial_{tt} F}_{Q_j}
\big) \nn\\
&\leq CC_*^{3+\dimsurf/2}
+ {\mathscr K} ,
\end{align} 
where we have used \eqref{EstGamma-Q1}, \eqref{EstGamma-Q3} and \eqref{EstGamma-Q4} to estimate 
$$
Cd_{*}^{1+\dimsurf/2} \big(\vertiii{\partial_tF}_{Q_*} 
+d_*^2\vertiii{\partial_{tt} F}_{Q_*} \big)  , 
$$ 
and introduced the following notation: 
\begin{align}\label{express-K}
{\mathscr K}  :&=
\sum_{j} d_{j}^{1+\dimsurf/2}(d_j^{-1}\vertiii{F}_{1,Q_j}
+\vertiii{\partial_tF}_{Q_j}
+d_{j}\vertiii{\partial_tF}_{1,Q_j}+d_{j}^2\vertiii{\partial_{tt}F}_{Q_j})  . 
\end{align} 

It remains to estimate ${\mathscr K} $. To this end, in view of \eqref{RGFdef} and \eqref{EqGammh}, 
we set ``$\phi_h=H_h$, $\phi=H$, $\phi_h(0)= P_h(\Gah) \bar\delta_{q^{-1}(x_0)}$ and $\phi(0)=\tilde\delta_{x_0}$'' 
and ``$\phi_h=\partial_t H_h$, $\phi=\partial_t H$, $\phi_h^\ell (0)=\Delta_{\Gamma,h} (P_h(\Gah) \bar\delta_{q^{-1}(x_0)})^\ell$ and $\phi(0)=\Delta_{\Gamma}\tilde\delta_{x_0}$''
in Lemma \ref{LocEEst}, respectively.
Then we obtain 
\begin{align}\label{F1tE}
d_j^{-1}\vertiii{F}_{1,Q_j}+\vertiii{\partial_tF}_{Q_j} 
&\le 
C\epsilon^{-3}(\widehat{I_j}+\widehat{X_j} + \widehat{Y_j} + d^{-2}_j\vertiii{ F }_{Q'_j} )  \\
&\quad 
+ (C h^{1/2}d_j^{-1/2} + C\epsilon^{-1}hd_j^{-1} + \epsilon) \big( d_j^{-1}\vertiii{F}_{1,Q_j'}
+ \vertiii{\partial_{t}F }_{Q_j'}\big) \qquad \quad\,\, \nn
\end{align}
and
\begin{align}\label{F1ttE}
d_{j}\vertiii{\partial_tF}_{1,Q_j}+d_{j}^2\vertiii{\partial_{tt}F}_{Q_j} 
&\le 
C \epsilon^{-3} (\overline {I_j}+\overline{X_j} + \overline{Y_j} + \vertiii{ \partial_tF }_{Q'_j} )  \\
&\quad 
+ (C h^{1/2}d_j^{-1/2} + C\epsilon^{-1}hd_j^{-1} + \epsilon) \big(  d_j\vertiii{\partial_{t}F}_{1,Q_j'}
+d_{j}^2\vertiii{\partial_{tt}F }_{Q_j'} \big) 
\, , \nn
\end{align}
respectively. 
By using interpolation error estimate, \eqref{Detal-pointwise1} (exponential decay of $ P_h(\Gah) \bar \delta_{q^{-1}(x_0)}$) and Lemma \ref{GFEst1} (local estimates of regularized Green's function), we have 
\begin{align}
\widehat{I_j}&=\|P_h(\Gah) \bar\delta_{q^{-1}(x_0)}\|_{1,\Gamma_j'}
+d_j^{-1}\| P_h(\Gah) \bar\delta_{q^{-1}(x_0)} \|_{\Gamma_j'} 
\leq Ch^2d_{j}^{-3-\dimsurf/2},  \label{EstIj}  \\[5pt]
\widehat{X_j}&=
d_j\vertiii{\partial_t(I_h\rg-\rg)}_{1,Q_j'}
+\vertiii{\partial_t(I_h\rg-\rg)}_{Q_j'}  \nn\\
&\quad\, 
+d_j^{-1}\vertiii{I_h\rg-\rg}_{1,Q_j'}
+ d_j^{-2}\vertiii{I_h\rg-\rg}_{Q_j'}  \nn\\
&
\leq Cd_j h \vertiii{\partial_{t}\rg}_{L^2H^{2}(Q_j'')}
+ C d_j ^{-1} h\vertiii{\rg}_{L^2H^{2}(Q_j'')} \nn\\
&
\leq C h d_j^{-2-\dimsurf/2}  , \\
 \widehat{Y_j} &=  h^{k+1}
\bigg(d_j \vertiii{\nabla_\Gamma \partial_t \rg}_{Q_j'}
+  \vertiii{\partial_t \rg}_{Q_j'}
+ d_j^{-1} \vertiii{\nabla_\Gamma \rg}_{Q_j'}
+ d_j^{-2} \vertiii{\rg}_{Q_j'}\bigg) \notag\\
&\leq C h^{k+1} d_j^{-1-\dimsurf/2} ,
\end{align}
and
\begin{align}
\overline{I_j}&=d_{j}^{2}\|\Delta_{\Gamma,h} (P_h(\Gah) \bar\delta_{q^{-1}(x_0)})^\ell \|_{1,\Gamma_j'}
+d_j \|\Delta_{\Gamma,h} (P_h(\Gah) \bar\delta_{q^{-1}(x_0)})^\ell \|_{\Gamma_j'}
\leq Ch^2d_{j}^{-3-\dimsurf/2},\\[5pt]
\overline{X_j}&=
d_j^3\vertiii{I_h\partial_{tt}\rg-\partial_{tt}\rg }_{1,Q_j'}
+d_j^2\vertiii{ I_h\partial_{tt}\rg-\partial_{tt}\rg}_{Q_j'} \nn\\
&\quad\, 
+d_j \vertiii{I_h\partial_{t}\rg-\partial_{t}\rg}_{1,Q_j'}+
\vertiii{I_h\partial_{t}\rg-\partial_{t}\rg}_{Q_j'}  \nn\\
&
\leq C ( d_j^3 h+ d_j^{ 2}h^{2})\vertiii{\partial_{tt}\rg}_{L^2H^{2}(Q_j'')}
+ C (d_j h+ h^{2})\vertiii{\partial_{t}\rg}_{L^2H^{2}(Q_j'')}  \nn\\
&
\leq C h d_j^{-2-\dimsurf/2} ,  \\
\label{ovXj}
\overline{Y_j} &=  h^{k+1}
\bigg(d_j^3 \vertiii{\nabla_\Gamma \partial_{tt} \rg}_{Q_j'}
+ d_j^2 \vertiii{\partial_{tt} \rg}_{Q_j'}
+ d_j \vertiii{\nabla_\Gamma \partial_t \rg}_{Q_j'}
+ \vertiii{\partial_t \rg}_{Q_j'}\bigg) \notag\\
&\leq C h^{k+1} d_j^{-1-\dimsurf/2} .
\end{align}
By substituting \eqref{F1tE}-\eqref{ovXj} into the expression of ${\mathscr K}$ in \eqref{express-K}, we have
\begin{align} 
{\mathscr K} 
&=\sum_{j} d_{j}^{1+\dimsurf/2} (d_j^{-1}\vertiii{F}_{1,Q_j}
+\vertiii{\partial_tF}_{Q_j}
+d_{j}\vertiii{\partial_tF}_{1,Q_j}
+d_{j}^2\vertiii{\partial_{tt}F}_{Q_j})  \nn\\
&\leq C \sum_{j}  d_{j}^{1+\dimsurf/2}
\epsilon^{-3}
\big(h^2d_j^{-3-\dimsurf/2}+h d_j^{-2-\dimsurf/2}
+d_j^{-2}\vertiii{F}_{Q'_j} \big) \nn\\
&\quad 
+ \sum_{j} (C h^{1/2}d_j^{-1/2} + C\epsilon^{-1}hd_j^{-1} + \epsilon)
d_{j}^{1+\dimsurf/2} 
(d_j^{-1}\vertiii{F}_{1,Q_j'} +\vertiii{\partial_tF}_{Q_j'}
+d_{j}\vertiii{\partial_tF}_{1,Q_j'}
+d_{j}^2\vertiii{\partial_{tt}F}_{Q_j'} ) \nn\\
&\leq C 
+C\epsilon^{-3}  \sum_{j}  d_{j}^{-1+\dimsurf/2} \vertiii{F}_{Q_j'}  \nn\\
&\quad 
+ \sum_{j}(C h^{1/2}d_j^{-1/2} + C\epsilon^{-1}hd_j^{-1} + \epsilon) d_{j}^{1+\dimsurf/2}
(d_j^{-1}\vertiii{F}_{1,Q_j'} +\vertiii{\partial_tF}_{Q_j'}
+d_{j}\vertiii{\partial_tF}_{1,Q_j'}
+d_{j}^2\vertiii{\partial_{tt}F}_{Q_j'} )  .
\end{align}

Since $\vertiii{F}_{Q_j'} \le C(\vertiii{F}_{Q_{j-1}} + \vertiii{F}_{Q_{j}} + \vertiii{F}_{Q_{j+1}})$, 
we can convert the $Q_j'$-norm in the inequality above to the $Q_j$-norm:
\begin{align}\label{slkll}
{\mathscr K} &\leq C 
+ C\epsilon^{-3} d_{*}^{-1+\dimsurf/2}  \vertiii{F}_{Q_*} 
+ C\epsilon^{-3} \sum_{j}  d_{j}^{-1+\dimsurf/2}  \vertiii{F}_{Q_j} \nn\\ 
&\quad 
+\sum_{j}
(C h^{1/2}d_j^{-1/2} + C\epsilon^{-1}hd_j^{-1} + \epsilon)
 d_*^{1+\dimsurf/2}
(d_*^{-1}\vertiii{F}_{1,Q_*} +\vertiii{\partial_tF}_{Q_*}
+d_{*}\vertiii{\partial_tF}_{1,Q_*}
+d_{*}^2\vertiii{\partial_{tt}F}_{Q_*} )  \nn\\ 
&\quad 
+\sum_{j} (C h^{1/2}d_j^{-1/2} + C\epsilon^{-1}hd_j^{-1} + \epsilon) d_{j}^{1+\dimsurf/2}
(d_j^{-1}\vertiii{F}_{1,Q_j} +\vertiii{\partial_tF}_{Q_j}
+d_{j}\vertiii{\partial_tF}_{1,Q_j}
+d_{j}^2\vertiii{\partial_{tt}F}_{Q_j} ) \nn\\
&\leq 
C_\epsilon
+C_\epsilon C_*^{3+\dimsurf/2}+ 
C_\epsilon \sum_{j} d_{j}^{-1+\dimsurf/2} \vertiii{F}_{Q_j} 
+ C (C_\epsilon C_*^{-1/2} +C C_*^{-1} + \epsilon) {\mathscr K} ,
\end{align}
where we have used $d_j\ge C_*h$ and \eqref{EstGamma-Q1}-\eqref{EstGamma-Q5} to estimate 
$$
	\vertiii{F}_{1,Q_*}, \qquad \vertiii{\partial_tF}_{Q_*}, \qquad \vertiii{\partial_tF}_{1,Q_*}, \qquad \text{and} \qquad 
	\vertiii{\partial_{tt}F}_{Q_*} ,
$$
and used the expression of ${\mathscr K}$ in \eqref{express-K} to bound the terms involving $Q_j$.  
Since $\epsilon_* = \epsilon + \epsilon^{-1}/C_*$, we can make $\epsilon_*$ sufficiently small by first choosing $\epsilon$ small enough and then choosing $C_*$ large enough ($\epsilon$ can be fixed now and $C_*$ will be determined later). Then the last term on the right-hand side of \eqref{slkll} can be absorbed by the left-hand side. Therefore, we obtain
\begin{align} \label{K-L2F}
{\mathcal K}
&\leq
C +C C_*^{3+\dimsurf/2}+
\sum_{j} d_{j}^{-1+\dimsurf/2}  \vertiii{F}_{Q_j} .
\end{align}
%
%

It remains to estimate $\vertiii{F}_{Q_j}$. We apply the parabolic duality argument: Let $w$ be the solution of the following backward parabolic equation on the domain $\mathcal Q$:
\begin{align}
	\left\{\begin{array}{ll}
			-\partial_t w - \Delta_\Gamma w = v, \notag\\
		w(1)=0 ,  \notag
	\end{array}\right.
\end{align}
with ${\rm supp}(v) \subseteq Q_j$ and $\| v \|_{L^2(Q_j)} = 1$.
After testing the above equation by $F$, we get
\begin{align}\label{eq:dual_test}
	[F, v] = (F(0), w(0)) + [F_t, w] + [\nabla_\Gamma F, \nabla_\Gamma w] ,
\end{align}
where
\begin{align}
	&(F(0), w(0)) 
	= (\bar\delta_{h,q^{-1}(x_0)}^\ell - \tilde\delta_{x_0}, w(0)) \notag\\
	&= (P_h \tilde\delta_{x_0} - \tilde\delta_{x_0}, w(0) - I_h w(0))
	+ (\bar\delta_{h,q^{-1}(x_0)}^\ell - \tilde\delta_{h, x_0}, w(0)) \notag\\
	&= (P_h \tilde\delta_{x_0}, w(0) - I_h w(0))_{\Gamma_j''}
	+ (P_h \tilde\delta_{x_0} - \tilde\delta_{x_0}, w(0) - I_h w(0))_{(\Gamma_j'')^c}
	+ (\bar\delta_{h,q^{-1}(x_0)}^\ell - \tilde\delta_{h, x_0}, w(0)) \notag\\
	&=: \mathcal I_1 + \mathcal I_2 + \mathcal I_3 .
\end{align} 
Both $\mathcal I_1$ and $\mathcal I_2$ can be estimated in the same way as \cite[Eq. (5.21), (5.23) and (5.24)]{Li-MCOM-2019}. The only difference is the replacement of flat domain $\Omega$ in \cite{Li-MCOM-2019} by surface $\Gamma$ here. These estimates of $\mathcal I_1$ and $\mathcal I_2$ can be written as follows:
\begin{align}
| \mathcal I_1 | \leq C h^2 d_j^{-1-\dimsurf/2} 
\quad\mbox{and}\quad\,\, 
	| \mathcal I_2 | 
	&\leq C h^2 \| w(0) \|_{W^{2,\infty}((\Gamma_j')^c)} , \notag\\
	&\leq C h^2 \sup_{y\in\Gamma} \| G(\cdot,\cdot,y) \|_{L^\infty W^{2,\infty}(\cup_{k\leq j+\log_2 C} Q_k)} \| v \|_{L^{1}(Q_j)} , \notag\\
	&\leq C h^2 d_j^{-2-\dimsurf} \| v \|_{L^{1}(Q_j)} , \notag\\
	&\leq C h^2 d_j^{-1-\dimsurf/2} \vertiii{v}_{L^{2}(Q_j)} , \notag
\end{align}
where in the second-to-last inequality we have used the following pointwise estimate of Green's function (cf.~\eqref{GausEst1}):
\begin{align}
	&\sup_{y\in\Gamma} \| G(\cdot,\cdot,y) \|_{L^\infty W^{2,\infty}(\cup_{k\leq j+\log_2 C} Q_k)} \notag\\
	&\leq C
	\sup_{y\in\Gamma} \| G(\cdot,\cdot,y) \|_{L^\infty L^{\infty}(\cup_{k\leq j+\log_2 C} Q_k)}^{1-\theta}
	\| G(\cdot,\cdot,y) \|_{L^\infty H^{M}(\cup_{k\leq j+\log_2 C} Q_k)}^\theta
	\notag\\
	&\leq
	C d_j^{-m(1-\theta)} d_j^{-(M+m/2)\theta}
	=
	C d_j^{-2-\dimsurf} , \notag
\end{align}
where in the first and second inequality we have used the Gagliardo–Nirenberg interpolation inequality with $\theta=\frac{4}{2M-m}$ for any integer $M>\frac{m}{2}$ (cf.~\cite[Theorem 3.70]{Aubin82}) and the local estimates of Green's function (Lemma \ref{GFEst1} and Remark \ref{rmk:high_est}) respectively.

Similarly, by using Lemma \ref{lemma:delta} and \eqref{GausEst1}, the argument in \cite{Li-MCOM-2019} leads to the following estimate:
\begin{align}
	| \mathcal I_3 | 
	&\leq 
	\| \bar\delta_{h,q^{-1}(x_0)}^\ell - \tilde\delta_{h, x_0} \|_{L^{1}(\Gamma)} \| w(0) \|_{L^{\infty}(\Gamma)} 
	\notag\\
	&\leq 
	C h^{k+1} \sup_{y\in\Gamma} \| G(\cdot,\cdot,y) \|_{L^\infty L^{\infty}(\cup_{k\leq j+\log_2 C} Q_k)} \| v \|_{L^{1}(Q_j)}
	\notag\\
	&\leq 
	C h^{k+1} d_j^{-\dimsurf} \| v \|_{L^{1}(Q_j)}
	\notag\\
	&\leq 
	C h^{k+1} d_j^{1-\dimsurf/2} \vertiii{v}_{L^{2}(Q_j)} . \notag
\end{align}

We decompose the second and third terms on the right-hand side of \eqref{eq:dual_test} as follows:
\begin{align}
	&[F_t, w] + [\nabla_\Gamma F, \nabla_\Gamma w]  \notag\\
	&= [\partial_t(\rg_h^\ell - \rg), w] + [\nabla_\Gamma (\rg_h^\ell - \rg), \nabla_\Gamma w]  \notag\\
	&= [a_h\partial_t\rg_h^\ell - \partial_t\rg, w] + [B_h\nabla_\Gamma \rg_h^\ell - \nabla_\Gamma \rg, \nabla_\Gamma w]  \notag\\
	&\quad+ [(1 - a_h)\partial_t \rg_h^\ell, w] + [(1 - B_h)\nabla_\Gamma \rg_h^\ell, \nabla_\Gamma w]  \notag\\
	&= [a_h\partial_t\rg_h^\ell - \partial_t\rg, (w - I_h w)] + [B_h\nabla_\Gamma \rg_h^\ell - \nabla_\Gamma \rg, \nabla_\Gamma (w - I_h w)]  \notag\\
	&\quad+ [(1 - a_h)\partial_t \rg, w] + [(1 - B_h)\nabla_\Gamma \rg, \nabla_\Gamma w]  \notag\\
	&\quad+ [(1 - a_h)\partial_t (\rg_h^\ell - \rg), w] + [(1 - B_h)\nabla_\Gamma (\rg_h^\ell - \rg), \nabla_\Gamma w]  \notag\\
	&= [a_h\partial_t(\rg_h^\ell - \rg), (w - I_h w)] + [B_h\nabla_\Gamma (\rg_h^\ell - \rg), \nabla_\Gamma (w - I_h w)]  \notag\\
	&\quad+ [(a_h - 1) \partial_t\rg, (w - I_h w)] + [(B_h - 1) \nabla_\Gamma \rg, \nabla_\Gamma (w - I_h w)]  \notag\\
	&\quad+ [(1 - a_h)\partial_t \rg, w] + [(1 - B_h)\nabla_\Gamma \rg, \nabla_\Gamma w]  \notag\\
	&\quad+ [(1 - a_h)\partial_t (\rg_h^\ell - \rg), w] + [(1 - B_h)\nabla_\Gamma (\rg_h^\ell - \rg), \nabla_\Gamma w]  \notag\\
	&=: \sum_{i=4}^{11} \mathcal I_i .
\end{align}
By \cite[Eq. (5.26)]{Li-MCOM-2019}, we know
\begin{align}
	| \mathcal I_4 | + | \mathcal I_5 | \leq C \sum_{*, i} (h^{2} \vertiii{F_t}_{Q_i} + h \vertiii{F}_{1, Q_i}) \|w \|_{L^2 H^{2}(Q_i')} .
\end{align}
Using the interpolation error estimate,
\begin{align}
	| \mathcal I_6 | + | \mathcal I_7 | 
	&\leq C h^{k+1}\sum_{*, i} (h^{2} \vertiii{\rg_t}_{Q_i} + h \vertiii{\rg}_{1, Q_i}) \|w \|_{L^2 H^{2}(Q_i')} , \notag
\end{align}
 and from Lemma \ref{lemma:geo_perturb}, Sobolev embedding and H\"older's inequality, it holds
\begin{align}
	| \mathcal I_8 | + | \mathcal I_9 | 
	&\leq C h^{k+1} \sum_{*, i} ( d_i^{2} \vertiii{\rg_t}_{Q_i} + d_i \vertiii{\rg}_{1, Q_i}) \|w \|_{L^2 H^{2}(Q_i')} , \notag\\
	| \mathcal I_{10} | + | \mathcal I_{11} | 
	&\leq C h^{k+1} \sum_{*, i} ( d_i^{2} \vertiii{F_t}_{Q_i} + d_i \vertiii{F}_{1, Q_i}) \|w \|_{L^2 H^{2}(Q_i')} . \notag
\end{align}
According to the estimates above and \cite[Eqs. (5.31)--(5.32)]{Li-MCOM-2019} (with $\alpha=1$ therein), we get
\begin{align}\label{FL2Qj} 
\vertiii{F}_{Q_j} 
 &\leq 
 Ch^{2}d_j^{-1-\dimsurf/2} 
 +C h^{k+1} \sum_{*,i}(d_i^{2}\vertiii{\rg_t}_{Q_i} 
 + d_i \vertiii{\rg}_{1,Q_i})
 \Big(\frac{\min (d_i ,d_j )}{\max (d_i ,d_j )}\Big)  \notag\\
 &\quad
 +C\sum_{*,i}(h^{2}\vertiii{F_t}_{Q_i} 
 + h \vertiii{F}_{1,Q_i})
 \Big(\frac{\min (d_i ,d_j )}{\max (d_i ,d_j )}\Big) 
 \notag\\
 &\leq Ch^{2}d_j^{-1-\dimsurf/2} 
 +C h^{k+1} \sum_{*,i}
d_i^{1-\dimsurf/2}
\Big(\frac{\min (d_i ,d_j )}{\max (d_i ,d_j )}\Big)
 \notag\\
 &\quad
 +C\sum_{*,i}(h^{2}\vertiii{F_t}_{Q_i}  + h \vertiii{F}_{1,Q_i})
 \Big(\frac{\min (d_i ,d_j )}{\max (d_i ,d_j )}\Big) 
  .
\end{align}
Note that (cf.~\cite[Eq. (5.33)]{Li-MCOM-2019} with $\alpha=1$ therein)
\begin{align}\label{lalnu}
\sum_j d_{j}^{-1+\dimsurf/2} 
\Big(\frac{\min (d_i ,d_j )}{\max (d_i ,d_j )}\Big)
\leq 
Cd_i^{-1-\dimsurf/2} .
\end{align}
By substituting \eqref{FL2Qj}--\eqref{lalnu} into \eqref{K-L2F} we obtain 
\begin{align*}
{\mathscr K} &\leq 
C+CC_*^{3+\dimsurf/2}+
\sum_{j} Cd_{j}^{-1+\dimsurf/2} \vertiii{F}_{Q_j} \nn\\
&\leq C+CC_*^{3+\dimsurf/2}
+C \sum_j \left(\frac{h}{d_j}\right)^{2} 
\qquad\mbox{(here we have used \eqref{FL2Qj})}\\
&\quad
+C\sum_j d_{j}^{-1+\dimsurf/2} h^{k+1}
\sum_{*,i} d_i^{1-\dimsurf/2}
 \Big(\frac{\min (d_i ,d_j )}{\max (d_i ,d_j )}\Big) \notag\\
&\quad
+C\sum_j  d_{j}^{-1+\dimsurf/2} 
\sum_{*,i}(h^{2}\vertiii{F_t}_{Q_i} 
+ h \vertiii{F}_{1,Q_i})
\Big(\frac{\min (d_i ,d_j )}{\max (d_i ,d_j )}\Big) . 
\end{align*}
Consequently,
\begin{align*}
{\mathscr K}  
&\leq
C+CC_*^{3+\dimsurf/2} +C  C_*^{-2}
\qquad\mbox{(here we exchange the order of summation)} \\
&\quad
+C\sum_{*,i} h^{k+1} d_i^{1-\dimsurf/2}
\sum_{j}
d_{j}^{-1+\dimsurf/2}
 \Big(\frac{\min (d_i ,d_j )}{\max (d_i ,d_j )}\Big) \notag\\
&\quad 
+C\sum_{*,i}(h^{2}\vertiii{F_t}_{Q_i} 
+ h \vertiii{F}_{1,Q_i})
\sum_j d_{j}^{-1+\dimsurf/2} 
\Big(\frac{\min (d_i ,d_j )}{\max (d_i ,d_j )}\Big) \\
&\leq
C+CC_*^{3+\dimsurf/2} + C  C_*^{-2}
+C\sum_{*,i} h^{k+1} \notag\\
&\quad
+C\sum_{*,i}(h^{2}\vertiii{F_t}_{Q_i} 
+ h \vertiii{F}_{1,Q_i})d_i^{\frac{\dimsurf}{2}-1} 
\qquad\mbox{(here we use \eqref{lalnu})}\\
&=
C+CC_*^{3+\dimsurf/2}+C  C_*^{-2} 
+ C(\log C_* + \log\frac{1}{h}) h^{k+1}
\notag\\
&\quad
+C\sum_{*,i}d_i^{1+\dimsurf/2}\Big(\vertiii{F_t}_{Q_i} 
\Big(\frac{h}{d_i}\Big)^{2} 
+ d_i^{-1}\vertiii{F}_{1,Q_i}\Big(\frac{h}{d_i}\Big) \Big) \\
&\leq 
C+CC_*^{3+\dimsurf/2}+C  C_*^{-2}
+ C(\log C_* + \log\frac{1}{h}) h^{k+1}
+Cd_{*}^{1+\dimsurf/2}
\left(\vertiii{F_t}_{Q_*} + d_j^{-1}\vertiii{F}_{1,Q_*} \right) \\
&\quad
+C\sum_{i} d_i^{1+\dimsurf/2}\left(\vertiii{F_t}_{Q_i} 
+ d_j^{-1}\vertiii{F}_{1,Q_i}\right)\bigg(\frac{h}{d_i}\bigg) \\
&\leq 
C+CC_*^{3+\dimsurf/2}+C  C_*^{-2}
+ C\Big(\log C_* + \log\frac{1}{h}\Big) h^{k+1}
+C C_*^{-1} {\mathscr K}   .
\end{align*}
By choosing $C_*$ to be large enough ($C_*$ is determined now), the term $C C_*^{-1} {\mathscr K}$ will be absorbed by the left-hand side of the inequality above. In this case, the inequality above implies 
\begin{align}
{\mathscr K} \le C .
\end{align}  
Substituting the last inequality into \eqref{Bd31K2} yields 
\begin{align}\label{L1FtQ1}
\| \partial_t F\|_{L^1((0,1)\times \Gamma)} + \| t\partial_{tt} F\|_{L^1((0,1)\times \Gamma)} 
&\leq C .
\end{align} 

{\it Part II.}$\,\,\,$ 
For $t\in[1,+\infty)$, we first define the finite element space with vanishing average:
$\mathring S_h(\Gah) = \{\varphi_h \in S_h(\Gah): \int_{\Gah} \varphi_h =0 \}$
and
$\mathring S_h(\Gamma) = \{\varphi_h^\ell \in S_h(\Gamma): \int_{\Gamma} \varphi_h^\ell =0 \}$ on which we can define the inverse of the discrete Laplacian operators $\Delta_{\Gah,h}^{-1}$ and $\Delta_{\Gamma,h}^{-1}$ respectively.
\begin{lemma}
	For any $f_h \in \mathring S_h(\Gah)$, we define $f_h^{\rm avg, \ell}  = f_h^\ell -  |\Gamma|^{-1} \int_{\Gamma} f_h^\ell\in \mathring S_h(\Gamma)$. Then we have the following properties
	\begin{align}
		\| (\Delta_{\Gamma_h,h}^{-1} f_h)^\ell \|_{L^2(\Gamma)}
		&\sim
		\| \Delta_{\Gamma,h}^{-1} f_h^{\rm avg, \ell} \|_{L^2(\Gamma)} , \notag\\
		\| \Delta_{\Gamma,h}^{-1} f_h^{\rm avg, \ell} \|_{L^2(\Gamma)}
		&\leq
		C \| f_h^{\rm avg, \ell} \|_{L^1(\Gamma)} 
		\leq
		2C \| f_h^\ell \|_{L^1(\Gamma)} . \notag
	\end{align}
\end{lemma}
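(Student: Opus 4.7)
The plan is to prove the two claims of the lemma in sequence: first the negative-norm bound (second line), then the norm equivalence (first line). The trivial inequality $\|f_h^{\mathrm{avg}, \ell}\|_{L^1(\Gamma)} \leq 2\|f_h^\ell\|_{L^1(\Gamma)}$ follows directly from the definition of $f_h^{\mathrm{avg}, \ell}$: the subtracted constant $|\Gamma|^{-1}\int_\Gamma f_h^\ell$ has absolute value at most $|\Gamma|^{-1}\|f_h^\ell\|_{L^1(\Gamma)}$, so integrating over $\Gamma$ costs at most one extra copy of $\|f_h^\ell\|_{L^1}$.

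\paragraph{Negative-norm bound.} For the main $L^1$-to-$L^2$ estimate I would use a standard duality argument. Let $w_h = \Delta_{\Gamma,h}^{-1} f_h^{\mathrm{avg},\ell} \in \mathring S_h(\Gamma)$, and test against $\phi \in L^2(\Gamma)$ with $\int_\Gamma \phi = 0$ (since $w_h$ is mean-zero). Solve the continuous adjoint problem $-\Delta_\Gamma \psi = \phi$ on the closed surface $\Gamma$ with $\int \psi = 0$; elliptic regularity gives $\|\psi\|_{H^2(\Gamma)} \leq C\|\phi\|_{L^2}$ and, by Sobolev embedding in dimension $m \leq 3$, also $\|\psi\|_{L^\infty(\Gamma)} \leq C\|\phi\|_{L^2}$. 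Denote by $\tilde R_h \psi \in \mathring S_h(\Gamma)$ the zero-mean Ritz projection of $\psi$ associated with the bilinear form $(\nabla \cdot, \nabla \cdot)_\Gamma$ on $\mathring H^1(\Gamma)$; the $W^{1,q}$-stability from Section~\ref{sec:Rh} combined with Sobolev embedding at $W^{1,q}$ for $q > m$ yields $\|\tilde R_h \psi\|_{L^\infty(\Gamma)} \leq C\|\phi\|_{L^2}$. Galerkin orthogonality and the defining equation for $w_h$ then give $(w_h, \phi) = (\nabla w_h, \nabla \psi)_\Gamma = (\nabla w_h, \nabla \tilde R_h \psi)_\Gamma = (f_h^{\mathrm{avg},\ell}, \tilde R_h \psi)_\Gamma \leq \|f_h^{\mathrm{avg},\ell}\|_{L^1} \|\tilde R_h \psi\|_{L^\infty}$, and taking the supremum over $\phi$ completes the bound.

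\paragraph{Norm equivalence.} For the equivalence, I would set $u_h = \Delta_{\Gah, h}^{-1} f_h$ and $\tilde u_h = u_h^\ell - |\Gamma|^{-1}\int_\Gamma u_h^\ell \in \mathring S_h(\Gamma)$. Since $\int_{\Gah} u_h = 0$ lifts via \eqref{inner-product-1} to $\int_\Gamma a_h u_h^\ell = 0$, the subtracted constant equals $|\Gamma|^{-1}\int_\Gamma (1 - a_h) u_h^\ell$ and has magnitude $O(h^{k+1})\|u_h^\ell\|_{L^2}$ by Lemma~\ref{lemma:geo_perturb}, so $\|u_h^\ell\|_{L^2(\Gamma)} \sim \|\tilde u_h\|_{L^2(\Gamma)}$ for $h$ small. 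Using \eqref{inner-product-1}--\eqref{inner-product-2} and noting that constants pair to zero against mean-zero test functions, one finds that $\tilde u_h$ and $w_h$ satisfy, for all $v_h^\ell \in \mathring S_h(\Gamma)$, the respective equations $(B_h \nabla \tilde u_h, \nabla v_h^\ell)_\Gamma = (f_h^{\mathrm{avg}, \ell}, v_h^\ell)_\Gamma + ((a_h - 1) f_h^\ell, v_h^\ell)_\Gamma$ and $(\nabla w_h, \nabla v_h^\ell)_\Gamma = (f_h^{\mathrm{avg}, \ell}, v_h^\ell)_\Gamma$. The difference $e_h = \tilde u_h - w_h \in \mathring S_h(\Gamma)$ thus solves a discrete elliptic problem whose right-hand side is of order $O(h^{k+1})$; an Aubin--Nitsche argument using the $H^2$-regularity of the adjoint problem on the closed surface $\Gamma$, Ritz orthogonality, and Lemma~\ref{lemma:geo_perturb} yields $\|e_h\|_{L^2(\Gamma)} \leq Ch^{k+1}(\|\nabla \tilde u_h\|_{L^2} + \|f_h^\ell\|_{L^2}) \leq Ch^{k+1}\|f_h^\ell\|_{L^2}$, where the last step uses the basic energy bound $\|\nabla \tilde u_h\|_{L^2} \leq C\|f_h^\ell\|_{L^2}$. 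Combining with the lower bound $\|w_h\|_{L^2(\Gamma)} \geq c h^2 \|f_h^\ell\|_{L^2}$ (which follows from the discrete inverse inequality $\|\Delta_{\Gamma,h} w_h\|_{L^2} \leq C h^{-2} \|w_h\|_{L^2}$ on $\mathring S_h(\Gamma)$, a consequence of property (P2)) and using $k \geq 1$, one obtains the desired two-sided equivalence $\|\tilde u_h\|_{L^2} \sim \|w_h\|_{L^2}$ for $h$ sufficiently small.

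\paragraph{Main obstacle.} The hardest step is the last one: converting the absolute $O(h^{k+1})$ error estimate into a two-sided equivalence with $h$-independent constants. Because $\|w_h\|_{L^2}$ can itself be as small as $h^2\|f_h^\ell\|_{L^2}$ (e.g.\ when $f_h$ is a high-frequency discrete eigenmode of $\Delta_{\Gamma,h}$), the relative error $\|e_h\|_{L^2}/\|w_h\|_{L^2}$ is only $O(h^{k-1})$, which tends to zero for $k \geq 2$ but is merely bounded for $k = 1$, so explicit constant tracking (or an additional fixed-point type argument) is required in the latter case. A secondary technical point is the log-free $L^\infty$-stability of the Ritz projection invoked in the duality step, which is ultimately handled by passing from the $W^{1,\infty}$-stability of \cite[Theorem~3.2]{Dem09} to $L^\infty$ via Sobolev embedding at slightly higher $W^{1,q}$, as indicated in Section~\ref{sec:Rh}.
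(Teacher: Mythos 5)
Your proposal closely tracks the paper's own argument, but you stop short of its crucial final step.

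For the second claim, the paper simply cites \cite[Lemma~4.3]{BLK23}; your self-contained duality argument is the standard one and presumably the content of that reference. Two minor caveats: (a) the estimate $\|\tilde R_h\psi\|_{L^\infty} \lesssim \|\phi\|_{L^2}$ goes through only for $m\leq 3$ via the chain $H^2 \hookrightarrow W^{1,q} \hookrightarrow L^\infty$, which you note; and (b) the $W^{1,q}$-stability recalled in Section~\ref{sec:Rh} is stated for the Ritz projection associated to $-\Delta_\Gamma+1$, not for the zero-mean Ritz projection for $-\Delta_\Gamma$ that you invoke. These are different operators --- passing from one to the other is routine but not immediate, and would need a sentence of justification.

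For the first claim, your setup, the perturbation identity for $e_h = \tilde u_h - w_h$, and the resulting estimate $\|e_h\|_{L^2} \leq Ch^{k+1}\|f_h^\ell\|_{L^2}$ (which is really the energy bound plus Poincar\'e --- there is no extra gain from Aubin--Nitsche here, since the forcing is already $O(h^{k+1})$ and supported on the discrete space) all match the paper. But your conclusion is weaker than it should be, and the gap you flag is not actually there. The paper bounds $\|e_h\|_{L^2}$ against $\min\{\|u_h^\ell\|_{L^2}, \|w_h\|_{L^2}\}$ rather than against a single one of the two norms, using that the inverse inequalities $\|f_h^\ell\|_{L^2}\lesssim h^{-2}\|u_h^\ell\|_{L^2}$ and $\|f_h^{\rm avg,\ell}\|_{L^2}\lesssim h^{-2}\|w_h\|_{L^2}$ hold simultaneously and that $\|f_h^\ell\|_{L^2}\sim\|f_h^{\rm avg,\ell}\|_{L^2}$. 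With the $\min$, a two-case triangle-inequality argument then yields the two-sided equivalence with $h$-independent constants for every $k\geq 1$: if, say, $\|w_h\|_{L^2}\leq\|u_h^\ell\|_{L^2}$, then $\|\tilde u_h\|_{L^2}\leq(1+Ch^{k-1})\|w_h\|_{L^2}$, while $\|w_h\|_{L^2}\leq\|u_h^\ell\|_{L^2}\sim\|\tilde u_h\|_{L^2}$ closes the other direction, and symmetrically in the other case. Your bound $\|e_h\|_{L^2}\leq Ch^{k+1}\|f_h^\ell\|_{L^2}$ already contains this $\min$-form; only the concluding argument needs to be reorganised. No fixed-point argument or constant tracking is required for $k=1$.
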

\begin{proof}
	The first result is a consequence of consistency of $\Gamma$ and $\Gah$. We define $u_h = \Delta_{\Gah,h}^{-1} f_h \in \mathring S_h(\Gah)$ and $\tilde u_h^\ell = \Delta_{\Gamma,h}^{-1} f_h^{\rm avg, \ell} \in \mathring S_h(\Gamma)$ with $\int_{\Gah} u_h = \int_{\Gamma } \tilde u_h^\ell =0$ and define 
	$$u_h^{\rm avg, \ell}  = u_h^\ell -  |\Gamma|^{-1} \int_{\Gamma} u_h^\ell\in \mathring S_h(\Gamma) .$$ 
	Since $\int_{\Gamma} u_h^\ell = \int_{\Gamma_h}(a_h^{-1})^{-\ell} u_h = \int_{\Gamma_h}((a_h^{-1})^{-\ell}-1) u_h$ and $|(a_h^{-1})^{-\ell}-1|\le Ch^{k+1}$, it follows that 
	\begin{align}\label{eq:avg1}
		| u_h^{\rm avg, \ell}- u_h^\ell | \leq C h^{k+1} \| u_h \|_{L^1(\Gah)} ,
	\end{align}
	and, similarly 
	\begin{align}\label{eq:avg2}
		| f_h^{\rm avg, \ell}- f_h^\ell | \leq C h^{k+1} \| f_h \|_{L^1(\Gah)} .
	\end{align}
	By definition,
	\begin{align}
		(\nabla_{\Gah} u_h, \nabla_{\Gah} v_h)_{\Gah}
		&= (f_h ,v_h)_{\Gah} \quad\forall v_h\in S_h , \notag\\
		(\nabla_{\Gamma} \tilde u_h^\ell, \nabla_{\Gamma} v_h^\ell)_{\Gamma}
		&= (f_h^{\rm avg, \ell} ,v_h^\ell)_{\Gamma} \quad\forall v_h\in S_h \notag .
	\end{align}
	Applying change of variables and subtraction, we obtain
	\begin{align}
		(\nabla_{\Gamma} (u_h^\ell - \tilde u_h^\ell), \nabla_{\Gamma} v_h^\ell)_{\Gamma}
		&= 
		-((B_h-1) \nabla_{\Gamma} \tilde u_h^\ell, \nabla_{\Gamma} v_h^\ell)_{\Gamma}
		+
		((a_h-1) f_h^\ell ,v_h^\ell)_{\Gamma} 
		+
		( (f_h^\ell - f_h^{\rm avg, \ell}) ,v_h^\ell)_{\Gamma} . \notag
	\end{align}
	Then we test with $v_h^\ell = u_h^{\rm avg, \ell} - \tilde u_h^\ell \in \mathring S_h(\Gamma)$ and use \eqref{eq:avg1}--\eqref{eq:avg2}, Lemma \ref{lemma:geo_perturb}, Poincar\'e inequality and the inverse inequality on $\Gamma$ to derive
	\begin{align}
		&\| \nabla_{\Gamma} (u_h^{\rm avg, \ell} - \tilde u_h^\ell) \|_{L^2(\Gamma)}
		=
		\| \nabla_{\Gamma} (u_h^{\ell} - \tilde u_h^\ell) \|_{L^2(\Gamma)} \notag\\
		&\leq
		C h^{k+1} \| \nabla_{\Gamma} \tilde u_h^\ell \|_{L^2(\Gamma)}
		+
		C h^{k+1} \| f_h^\ell \|_{L^2(\Gamma)} \notag\\
		&\leq
		C h^{k+1} \| \nabla_{\Gamma} \tilde u_h^\ell \|_{L^2(\Gamma)}
		+
		C h^{k+1} \min \{\| f_h^\ell \|_{L^2(\Gamma)}, \| f_h^{\rm avg, \ell} \|_{L^2(\Gamma)}  \}
		\notag\\
		&\leq
		C h^{k} \|  \tilde u_h^\ell \|_{L^2(\Gamma)}
		+
		C h^{k-1} \min \{\| u_h^\ell \|_{L^2(\Gamma)}, \| \tilde u_h^\ell \|_{L^2(\Gamma)} \} , \notag
	\end{align}
	and from the triangle inequality and Poincar\'e inequality again
	\begin{align}
		\| u_h^\ell - \tilde u_h^\ell \|_{L^2(\Gamma)}
		&\leq
		\| u_h^{\rm avg, \ell} - u_h^\ell \|_{L^2(\Gamma)}
		+
		C \| \nabla_{\Gamma} (u_h^{\rm avg, \ell} - \tilde u_h^\ell) \|_{L^2(\Gamma)} \notag\\
		&\leq
		C h^{k+1} \| u_h^\ell \|_{L^1(\Gamma)}
		+
		C h^{k} \| \tilde u_h^\ell \|_{L^2(\Gamma)}
		+
		C h^{k-1} \min \{\| u_h^\ell \|_{L^2(\Gamma)}, \| \tilde u_h^\ell \|_{L^2(\Gamma)} \} . \notag
	\end{align}
	Therefore $\| \tilde u_h^\ell \|_{L^2(\Gamma)} \sim \| u_h^\ell \|_{L^2(\Gamma)}$
	, and this proves the first result.
	
	The second result follows from \cite[Lemma 4.3]{BLK23} 
	where we use the elliptic regularity theory and the $W^{1,q}$-stability of Ritz projection on the smooth surface $\Gamma$ (see Section \ref{sec:Rh}).
\end{proof}

Denote by $e^{t\Delta_{\Gah,h}}$ the analytical semigroup generated by the linear operator $\Delta_{\Gah,h}\colon  \mathring S_h(\Gah) \rightarrow \mathring S_h(\Gah)$. Then, by the previous lemma with $f_h$ replaced by $\bar\delta_{h,x_0}^{\rm avg}$ therein for any fixed $x_0\in \Gah$, if we define $\bar\delta_{h,x_0}^{\rm avg} = \bar\delta_{h,x_0} -  |\Gamma_h|^{-1} \int_{\Gah} \bar\delta_{h,x_0} \in \mathring S_h(\Gah)$ then
\begin{align}
	\| \partial_t \rg_h(t,\cdot, x_0) \|_{L^2(\Gah)}
	&=
	\| \partial_t e^{t\Delta_{\Gah,h}} \bar\delta_{h,x_0}^{\rm avg} \|_{L^2(\Gah)} \notag\\
	&=
	t^{-2}\| (t\Delta_{\Gah,h})^2 e^{t\Delta_{\Gah,h}} \Delta_{\Gah,h}^{-1} \bar\delta_{h,x_0}^{\rm avg} \|_{L^2(\Gah)} \notag\\
	&=
	t^{-2}\| \Delta_{\Gah,h}^{-1} \bar\delta_{h,x_0}^{\rm avg} \|_{L^2(\Gah)} \notag\\
	&\leq
	C t^{-2}\| \bar\delta_{h,x_0}^{\rm avg} \|_{L^1(\Gah)} \notag\\
	&\leq C t^{-2} . \notag
\end{align}

From the norm equivalence on $\Gamma$ and $\Gah$, it is straightforward to show the constant of Poincar\'e inequality (with vanishing average) on $\Gah$ is bounded from above by the constant of Poincar\'e inequality (with vanishing average) on $\Gamma$. As a consequence, the smallest eigenvalue of $-\Delta_{\Gah}\colon  \mathring S_h(\Gah) \rightarrow \mathring S_h(\Gah)$ has a lower bound, denoted by $\lambda_0$, which is uniform w.r.t $h$ and only depends on $\Gamma$. Hence (the average vanishes due to the differential $\partial_t$),
\begin{align}
	\| \partial_t \rg_h(t,\cdot,x_0) \|_{L^2(\Gah)}
	\leq 
	C e^{-\lambda_0 (t - 1)} \| \partial_t \rg_h(1,\cdot,x_0) \|_{L^2(\Gah)} 
	\leq 
	C e^{-\lambda_0 (t - 1)} . \notag
\end{align}
Similarly, we also have
\begin{align}
	\| \partial_{tt} \rg_h(t,\cdot,x_0) \|_{L^2(\Gah)}
	+
	\| \partial_t \rg(t,\cdot,x_0) \|_{L^2(\Gah)}
	+
	\| \partial_{tt} \rg(t,\cdot,x_0) \|_{L^2(\Gah)}
	\leq 
	C e^{-\lambda_0 (t - 1)} \quad t\geq 1 . \notag
\end{align}


The proof of Lemma \ref{LemGm2} is complete.

\section{Perturbation arguments for an evolving surface\\
(Proof of Theorem \ref{thm:disc_max_reg})}
\label{section:perturbation arguments for evolving surfaces}
\setcounter{equation}{0}
Following usual notational conventions, we will always identify a finite element function and the  corresponding vector collecting its nodal values. Such representation is unique if we have specified the underlying domain. For example, given any $t,s \in [0,T]$, the two integrands of $$\int_{\Gamma_h(t)} v_h \quad\mbox{and}\quad \int_{\Gamma_h(s)} v_h$$ have the same vector of nodal values, denoted by $\v$, but are defined on different domains $\Gamma_h(t)$ and $\Gamma_h(s)$. When the underlying domain is specified, $\v$ is automatically substantialized to a finite element function $v_h$ on that domain. Since all of the quantitative computations in this paper involve either integrals or norms, our notations for finite element functions will always have a unique and clear meaning. For another example, $\| v_h \|_{\Gamma_h(t)}$ and $\| v_h \|_{\Gamma_h(s)}$ denote the norms of a finite element function (a nodal vector) on the two different surfaces $\Gamma_h(t)$ and $\Gamma_h(s)$, respectively.
Correspondingly, the interpolation operator $I_h$ should be interpreted as the determination of the nodal vector which uniquely corresponds to a finite element function after specifying the underlying surface. 

Given any $s, t \in [0, T]$, we first pull back the scheme onto the piecewise polynomial approximate surface $\ls(s)$ and then carry out temporal perturbation argument on $\Gamma_h(s)$. 

For the fixed time $s\in [0, T]$,
we define the scalar function $a(t,x)$ and the $\R^{(\dimsurf+1)\times (\dimsurf+1)}$-valued function $B(t,x)$ with $x\in \Gamma(s)$ to be the smooth prefactor of the following change of variables via the smooth flow parametrized on $\Gamma(s)$, i.e.~$F_s(t)\colon  \Gamma(s) \rightarrow \Gamma(t)$,
\begin{align}
	\int_{\Gamma(t)} u v
	&=
	\int_{\Gamma(s)} a(t, x) (u \circ F_s) \, (v \circ F_s) ,
	\notag\\
	\int_{\Gamma(t)} \nabla_{\Gamma(t)} u \cdot \nabla_{\Gamma(t)} v
	&=
	\int_{\Gamma(s)} B(t, x) \nabla_{\Gamma(s)} (u \circ F_s) \cdot \nabla_{\Gamma(s)} (v \circ F_s) . \notag
\end{align}
Similarly, we define the piecewise smooth scalar function $\bar a(t,x)$ and matrix-valued function $\bar B(t,x)$ with $x\in \ls(s)$ to be the prefactor of the following change of variables via the smooth flow parametrized on $\ls(s)$, i.e.~$F_s(t) \circ q(s)\colon  \ls(s) \rightarrow \Gamma(t)$,
\begin{align}
	\int_{\Gamma(t)} u v
	&=
	\int_{\ls(s)} \bar a(t, x) (u \circ F_s \circ q(s)) \, (v \circ F_s \circ q(s) ),
	\notag\\
	\int_{\Gamma(t)} \nabla_{\Gamma(t)} u \cdot \nabla_{\Gamma(t)} v
	&=
	\int_{\ls(s)} \bar B(t, x) \nabla_{\ls(s)} (u \circ F_s \circ q(s)) \cdot \nabla_{\ls(s)} (v \circ F_s \circ q(s)) , \notag
\end{align}
and define the piecewise smooth $\bar a_h(t,x)$ and $\bar B_h(t,x)$ with $x\in\ls(s)$ to be the prefactor of the following change of variables via the discrete flow parametrized on $\ls(s)$, i.e.~$F_{h,s}(t) := I_h (F_s(t)\circ q(s))\colon  \ls(s) \rightarrow \Gamma_h(t)$,
\begin{align}
	\int_{\Gamma_h(t)} u v
	&=
	\int_{\ls(s)} \bar a_h(t, x) u \circ F_{h,s} \, v \circ F_{h,s} ,
	\notag\\
	\int_{\Gamma_h(t)} \nabla_{\Gamma_h(t)} u \cdot \nabla_{\Gamma_h(t)} v
	&=
	\int_{\ls(s)} \bar B_h(t, x) \nabla_{\ls(s)} (u \circ F_{h,s}) \cdot \nabla_{\ls(s)} (v \circ F_{h,s}) . \notag
\end{align}
Since $F_{h,s}(t) = I_h (F_s(t)\circ q(s))$, by the interpolation error estimates, it follows that for any $s,t\in[0,T]$
\begin{align}
	\| \bar a_h(t, \cdot) - \bar a(t, \cdot) \|_{L^\infty(\ls(s))} &\leq C h^{k+1} , \notag\\
	\| \bar B_h(t, \cdot) - \bar B(t, \cdot) \|_{L^\infty(\ls(s))} &\leq C h^{k+1} \notag ,
\end{align}
and moreover from the geometric perturbation error estimates (cf.~\cite{Dem09})
\begin{align}
	\| \bar a^\ell(t, \cdot) - a(t, \cdot) \|_{L^\infty(\Gamma(s))} &\leq C h^{k+1} , \notag\\
	\| \bar B^{l}(t, \cdot) - B(t, \cdot) \|_{L^\infty(\Gamma(s))} &\leq C h^{k} \notag .
\end{align}
Hence $\tilde B(t,x) := (B(t)\circ q(s)) (x) \approx \bar B_h(t,x), x\in \ls(s), s,t\in [0, T]$ is a good approximation which is globally continuous and piecewise smooth. This global continuity will allow us to take advantage of the definition of discrete Laplacian. With the definitions above we have, for any $v_h\in S_h$,
\begin{align}\label{eq:scheme_pb0}
	&\quad(\partial_t u_h, v_h)_{\Gamma_h(s)}
	+ (\nabla_{\Gamma_h(s)} u_h, \nabla_{\Gamma_h(s)} v_h)_{\Gamma_h(s)} \notag\\
	&=(\bar a_h(s,\cdot) \partial_t u_h(t,\cdot), v_h)_{\ls(s)}
	+ (\bar B_h(s,\cdot) \nabla_{\ls(s)} u_h(t,\cdot), \nabla_{\ls(s)} v_h)_{\ls(s)} \notag\\
	&=
	(\bar a_h(t,\cdot) f_h(t,\cdot), v_h)_{\ls(s)}
	+
	((\bar a_h(s,\cdot) - \bar a_h(t,\cdot)) \partial_t u_h(t,\cdot), v_h )_{\ls(s)} \notag\\
	&\quad
	+ ((\bar B_h(s,\cdot) - \bar B_h(t,\cdot)) \nabla_{\ls(s)} u_h(t,\cdot), \nabla_{\ls(s)} v_h)_{\ls(s)} \notag\\
	&=:
	(\bar a_h(t,\cdot) f_h(t,\cdot), v_h)_{\ls(s)}
	+
	((\bar a_h(s,\cdot) - \bar a_h(t,\cdot)) \partial_t u_h(t,\cdot), v_h )_{\ls(s)}
	\notag\\
	&\quad+ (\Delta_{\ls(s),h}^{B_t} u_h(t,\cdot), v_h)_{\ls(s)} ,
\end{align}
where $\Delta_{\ls(s),h}^{B_t} u_h \in S_h(\ls(s))$ is defined as the Riesz representative of the following linear functional on $S_h(\ls(s))$:
\begin{align}
	((\bar B_h(s) - \bar B_h(t)) \nabla_{\ls(s)} u_h(t), \nabla_{\ls(s)} \cdot)_{\ls(s)}
	\colon S_h(\ls(s)) \rightarrow S_h(\ls(s)) . \notag
\end{align}
The estimate of $\Delta_{\ls(s),h}^{B_t} u_h$ is given in the lemma below.
\begin{lemma}\label{lemma:delta_h}
	For any $q\in [1,\infty]$, we have the estimate
	\begin{align}
		\| \Delta_{\ls(s),h}^{B_t} u_h(t) \|_{L^q(\Gamma_h(s))}
		\leq
		C\| \nabla_{\Gamma_h(s)} u_h(t) \|_{L^q(\Gamma_h(s))}
		+
		C |s-t|\| \Delta_{\Gamma_h(s),h} u_h(t) \|_{L^q(\Gamma_h(s))} . \notag
	\end{align}
\end{lemma}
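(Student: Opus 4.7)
The plan is to reduce the estimate, via $L^q$-duality, to a bilinear form bound on $M:=\bar B_h(s)-\bar B_h(t)$, then to split $M$ into a globally continuous $O(|s-t|)$ piece and a purely geometric $O(h^{k+1})$ remainder, and finally to combine super-approximation with the defining identity of the discrete Laplacian.

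First I would dualize: by the $L^{q'}$-stability of the $L^2$ projection $P_h(\Gah(s))$ onto $S_h(\Gah(s))$, itself a consequence of the pointwise decay \eqref{Detal-pointwise1}--\eqref{L^p-stability-P_h}, it suffices to control
\begin{align*}
|(M\nabla_{\ls(s)} u_h,\nabla_{\ls(s)}\varphi_h)_{\ls(s)}|
\end{align*}
by the right-hand side of the lemma, uniformly in $\varphi_h\in S_h(\Gah(s))$ with $\|\varphi_h\|_{L^{q'}(\Gah(s))}=1$. Next I would write $M=M_0+M_1$, with $M_0:=\tilde B(s)-\tilde B(t)$ and $M_1:=(\bar B_h(s)-\tilde B(s))-(\bar B_h(t)-\tilde B(t))$. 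Since $\tilde B(t,x)=B(t,q(s,x))$ is globally continuous on $\Gah(s)$, piecewise smooth, and $B$ depends smoothly on $t$ uniformly in space, one has $\|M_0\|_{L^\infty(\Gah(s))}+\|\nabla M_0\|_{L^\infty(\textrm{elementwise})}\le C|s-t|$; the interpolation and geometric perturbation bounds recalled just above the lemma give $\|M_1\|_{L^\infty(\Gah(s))}\le Ch^{k+1}$.

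The $M_1$ contribution is routine: H\"older's inequality and the inverse inequality (P2) yield $Ch^{k+1}\|\nabla u_h\|_{L^q}\cdot h^{-1}\|\varphi_h\|_{L^{q'}}\le C\|\nabla u_h\|_{L^q}$, producing the first summand on the right-hand side. For the principal $M_0$ contribution I would argue on the scalar template first: for a single smooth scalar $m$ with $\|m\|_{W^{1,\infty}(\textrm{elementwise})}\le C|s-t|$, the product rule gives $m\nabla\varphi_h=\nabla(m\varphi_h)-\varphi_h\nabla m$; inserting $I_h(m\varphi_h)\in S_h(\Gah(s))$ and invoking the defining identity of $\Delta_{\ls(s),h}$ leads to
\begin{align*}
(m\nabla u_h,\nabla\varphi_h)
&= -(\Delta_{\ls(s),h} u_h,\,I_h(m\varphi_h))_{\ls(s)} \\
&\quad + (\nabla u_h,\nabla(m\varphi_h-I_h(m\varphi_h)))_{\ls(s)} - (\nabla u_h\cdot\nabla m,\varphi_h)_{\ls(s)}.
\end{align*}
The first line is at most $C|s-t|\|\Delta_{\ls(s),h} u_h\|_{L^q}$ via the $L^{q'}$-stability of $I_h$ together with $\|m\|_{L^\infty}\le C|s-t|$; in the second line, the super-approximation estimate (P3) combined with the inverse inequality applied to $\nabla\varphi_h$ gives $\|\nabla(m\varphi_h-I_h(m\varphi_h))\|_{L^{q'}}\le C|s-t|\|\varphi_h\|_{L^{q'}}$, so the middle summand is bounded by $C|s-t|\|\nabla u_h\|_{L^q}$; the remainder summand is controlled directly by $C|s-t|\|\nabla u_h\|_{L^q}$ via H\"older.

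The hard part will be extending this scalar template to the matrix-valued $M_0$. Expanding $(M_0\nabla u_h,\nabla\varphi_h)$ entry-wise and applying the product rule to each $(M_0)_{ij}\partial_i\varphi_h$ produces a sum $\sum_{i,j}(\partial_j u_h,\partial_i I_h((M_0)_{ij}\varphi_h))$ of mixed partial products that is not visibly of the form $(\nabla u_h,\nabla\Theta_h)_{\ls(s)}$ needed to invoke the discrete Laplacian identity. The key tool is the \emph{global continuity} of the substitute $\tilde B$, which was introduced precisely for this step: it ensures that each scalar entry $(M_0)_{ij}$ is continuous across element interfaces, so the elementwise interpolants $I_h((M_0)_{ij}\varphi_h)$ patch into members of $S_h(\Gah(s))$ and the interface jump contributions arising from summing over elements vanish identically. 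After this reassembly, the matrix version of the displayed identity delivers $C|s-t|\|\Delta_{\ls(s),h} u_h\|_{L^q}+C|s-t|\|\nabla u_h\|_{L^q}$; combining with the $M_1$ bound and absorbing $C|s-t|\|\nabla u_h\|_{L^q}$ into $C\|\nabla u_h\|_{L^q}$ yields the asserted estimate. Were $\tilde B$ merely piecewise smooth with jumps, as the raw $\bar B_h$ is, this reassembly would fail and interface jump terms would not be controllable by the right-hand side, which is why the construction of the globally continuous $\tilde B$ is emphasized in the introduction.
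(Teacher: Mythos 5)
Your strategy coincides with the paper's: dualize, split $\bar B_h(s)-\bar B_h(t)$ into $M_0=\tilde B(s)-\tilde B(t)$ plus the consistency remainder $M_1$, dispose of $M_1$ via the inverse inequality, and treat $M_0$ with the Leibniz rule, super-approximation (P3), and the defining relation of the discrete Laplacian. That is exactly the paper's proof.

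There is, however, a gap at the step you yourself flag. You rightly observe that the Leibniz expansion of $(M_0\nabla u_h,\nabla\varphi_h)$ produces the mixed-index sum $\sum_{i,j}(\partial_j u_h,\partial_i I_h((M_0)_{ij}\varphi_h))$, which is not of the form $(\nabla_{\Gamma_h(s)} u_h,\nabla_{\Gamma_h(s)}\Theta_h)$ for a single $\Theta_h\in S_h$, so the identity $(\Delta_{\Gamma_h(s),h}u_h,\Theta_h)=-(\nabla_{\Gamma_h(s)} u_h,\nabla_{\Gamma_h(s)}\Theta_h)$ cannot be invoked directly. Your proposed fix does not close this. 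Global continuity of $(M_0)_{ij}$ is what guarantees $I_h((M_0)_{ij}\varphi_h)\in S_h(\Gamma_h(s))$ --- that is indeed why $\tilde B$ is introduced in place of the jump-discontinuous $\bar B_h$ --- but having each $\chi_{ij}:=I_h((M_0)_{ij}\varphi_h)$ lie in $S_h$ does not turn $\sum_{i,j}(\partial_j u_h,\partial_i\chi_{ij})$ into $(\nabla u_h,\nabla\chi_h)$: only the ``trace'' part has that form, and the off-diagonal summands involve discrete mixed second derivatives of $u_h$ that are not controlled by $\Delta_{\Gamma_h(s),h}u_h$ alone in the absence of a discrete Calder\'on--Zygmund estimate, which your argument does not supply. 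The paper writes this middle term compactly as $(\nabla_{\Gamma_h(s)} u_h,\nabla_{\Gamma_h(s)} I_h((\tilde B(s)-\tilde B(t))v_h))$ without unfolding the matrix structure either, so the subtlety you noticed is a real one; but the ``reassembly'' you invoke to resolve it is asserted rather than proved, and as written it does not hold. A minor point: $\|M_1\|_{L^\infty}\le Ch^{k+1}$ should be $Ch^k$, since $\|\bar B^\ell-B\|_{L^\infty(\Gamma(s))}\le Ch^{k}$ (the paper's estimate is $C(h+h^k)$ at this stage); this is still $O(1)$ after the inverse inequality for $k\ge 1$, so your $M_1$ step is unaffected.
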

\begin{proof}
%
	We apply change of variables, Leibniz rule, super-approximation estimate (cf.~(P3) in Section \ref{sec:hypo}) and the inverse inequality to get
	\begin{align}
		&\quad(\Delta_{\ls(s),h}^{B_t} u_h, v_h)_{\ls(s)} \notag\\
		&=
		(\nabla_{\Gamma_h(s)} u_h,  (\tilde B(s) - \tilde B(t)) \nabla_{\Gamma_h(s)} v_h)_{\Gamma_h(s)} \notag\\
		&\quad +
		\bigg(\nabla_{\Gamma_h(s)} u_h, \bigg((\bar B_h(s) - \bar B_h(t)) - (\tilde B(s) - \tilde B(t)) \bigg) \nabla_{\Gamma_h(s)} v_h \bigg)_{\Gamma_h(s)} \notag\\
		&= 
		- (\nabla_{\Gamma_h(s)} u_h, \nabla_{\Gamma_h(s)} (\tilde B(s) - \tilde B(t))  v_h)_{\Gamma_h(s)} 
		+ 
		(\nabla_{\Gamma_h(s)} u_h, \nabla_{\Gamma_h(s)} I_h ( (\tilde B(s) - \tilde B(t)) v_h))_{\Gamma_h(s)} \notag\\
		&\quad+ 
		(\nabla_{\Gamma_h(s)} u_h, \nabla_{\Gamma_h(s)} (1 - I_h) ((\tilde B(s) - \tilde B(t)) v_h))_{\Gamma_h(s)} \notag\\
		&\quad +
		\bigg(\nabla_{\Gamma_h(s)} u_h, \bigg((\bar B_h(s) - \bar B_h(t)) - (\tilde B(s) - \tilde B(t)) \bigg) \nabla_{\Gamma_h(s)} v_h \bigg)_{\Gamma_h(s)} \notag\\
		&\leq
		C \| \nabla_{\Gamma_h(s)} u_h \|_{L^p(\Gamma_h(s))} \| v_h \|_{L^q(\Gamma_h(s))} 
		+
		C |s-t| \| \Delta_{\Gamma_h(s),h} u_h \|_{L^p(\Gamma_h(s))} \| v_h \|_{L^q(\Gamma_h(s))}
		\notag\\
		&\quad
		+
		C (h + h^k) \| \nabla_{\Gamma_h(s)} u_h \|_{L^p(\Gamma_h(s))} \| \nabla_{\Gamma_h(s)} v_h \|_{L^q(\Gamma_h(s))}
		 \notag\\
		&\leq
		C \big(\| \nabla_{\Gamma_h(s)} u_h \|_{L^p(\Gamma_h(s))}
		+
		|s-t| \| \Delta_{\Gamma_h(s),h} u_h \|_{L^p(\Gamma_h(s))}\big) 
		\| v_h \|_{L^q(\Gamma_h(s))}
		\quad\forall v_h \in S_h , \notag
	\end{align}
	where in the second to last inequality we have used the consistency estimate $\| \bar B_h(t) - \tilde B(t) \|_{L^\infty(\Gamma_h(s))} \leq C h^k$ for any $t\in [0, T]$, and all of the constants are independent of $s,t \in [0, T]$ (possibly depend on $T$). Thus we conclude the lemma by duality.
\end{proof}
\begin{remark}\label{rmk:delta_h}
	The above proof additionally leads to, for any $s \in [0,T]$,
	\begin{align}
		\| \Delta_{\Gamma(s),h} u_h \|_{L^{q}(\Gamma(s))}
		\leq
		C \| \nabla_{\Gamma_h(s)} u_h \|_{L^q(\Gamma_h(s))}
		+
		C \| \Delta_{\Gamma_h(s),h} u_h \|_{L^q(\Gamma_h(s))} , \notag
	\end{align}
	and, for any $s^\prime \in [0,T]$,
	\begin{align}
		\| \Delta_{\Gamma_h(s), h} u_h \|_{L^{q}(\Gamma_h(s))}
		\leq
		C \| \nabla_{\Gamma_h(s^\prime),h} u_h \|_{L^q(\Gamma_h(s^\prime))}
		+
		C \| \Delta_{\Gamma_h(s^\prime),h} u_h \|_{L^q(\Gamma_h(s^\prime))} . \notag
	\end{align}
\end{remark}
\begin{lemma}\label{lemma:interp}
	The following discrete interpolation inequality holds on the polynomial surface $\Gamma_h(s)$, for all $q\in (1,\infty)$,
	\begin{align}
		\| \nabla_{\Gamma_h(s)} u_h \|_{L^q(\Gamma_h(s))}
		&\leq C
		\epsilon^{-1} \| u_h \|_{L^q(\Gamma_h(s))}
		+
		C \epsilon
		\| \Delta_{\Gamma_h(s),h} u_h \|_{L^q(\Gamma_h(s))}
		. \notag
	\end{align}
\end{lemma}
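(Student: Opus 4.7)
The plan is to reduce the claim to the continuous Gagliardo--Nirenberg inequality on the smooth surface $\Gamma(s)$, by constructing a suitable preimage of $u_h^\ell$ under the Ritz projection $R_h$ and then invoking its $W^{1,q}$-stability from Section~\ref{sec:Rh}. Since the finite element $W^{1,q}$-norms on $\Gamma_h(s)$ and $\Gamma(s)$ are equivalent, it suffices to prove the inequality on $\Gamma(s)$; at the end, Remark~\ref{rmk:delta_h} will convert $\Delta_{\Gamma(s),h}$ back to $\Delta_{\Gamma_h(s),h}$ at the cost of a lower-order gradient term that can be absorbed for $\epsilon$ sufficiently small.

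I would define $w = w_1 + w_2 \in W^{2,q}(\Gamma(s))$ by
\begin{align*}
(-\Delta_{\Gamma(s)} + 1)\, w_1 = u_h^\ell,
\qquad
(-\Delta_{\Gamma(s)} + 1)\, w_2 = -\Delta_{\Gamma(s),h}\, u_h^\ell ,
\end{align*}
so that $R_h w = u_h^\ell$ by Galerkin orthogonality (the Galerkin equations for $R_h w$ on $S_h(\Gamma(s))$ reduce to those satisfied by $u_h^\ell$). The $W^{1,q}$-stability of the Ritz projection then yields
\begin{align*}
\|\nabla_{\Gamma(s)} u_h^\ell\|_{L^q(\Gamma(s))}
= \|\nabla_{\Gamma(s)} R_h w\|_{L^q(\Gamma(s))}
\leq C\|\nabla_{\Gamma(s)} w_1\|_{L^q(\Gamma(s))} + C\|\nabla_{\Gamma(s)} w_2\|_{L^q(\Gamma(s))} .
\end{align*}
Combining the continuous Gagliardo--Nirenberg inequality on the closed manifold (cf.~\cite[Thm.~3.70]{Aubin82}) with $L^q$-elliptic regularity for $-\Delta+1$ gives $\|\nabla w_i\|_{L^q(\Gamma(s))} \le C\|w_i\|_{L^q(\Gamma(s))}^{1/2}\|w_i\|_{W^{2,q}(\Gamma(s))}^{1/2}$, with $\|w_1\|_{W^{2,q}} + \|w_1\|_{L^q} \le C\|u_h^\ell\|_{L^q}$ and $\|w_2\|_{W^{2,q}} \le C\|\Delta_{\Gamma(s),h} u_h^\ell\|_{L^q}$. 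Hence $\|\nabla w_1\|_{L^q} \le C\|u_h^\ell\|_{L^q}$ immediately, and a Young's inequality will produce the desired $\epsilon$-scaled inequality, \emph{provided} I establish the sharp bound $\|w_2\|_{L^q(\Gamma(s))} \le C\|u_h^\ell\|_{L^q(\Gamma(s))}$.

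This sharp $L^q$-bound on $w_2$ is the main obstacle, since the naive elliptic estimate gives only $\|w_2\|_{L^q} \le C\|\Delta_{\Gamma(s),h} u_h^\ell\|_{L^q}$, which is too crude. I would exploit the decomposition
\begin{align*}
w_2 = u_h^\ell - (-\Delta+1)^{-1}u_h^\ell + (-\Delta+1)^{-1}\bigl(\Delta u_h^\ell - \Delta_{\Gamma(s),h} u_h^\ell\bigr),
\end{align*}
where $\Delta u_h^\ell$ is interpreted distributionally on $\Gamma(s)$; the first two summands are obviously in $L^q$ with norm $\le C\|u_h^\ell\|_{L^q}$. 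For the Laplacian defect, test via duality against $\psi\in L^{q'}$ with $\|\psi\|_{L^{q'}}=1$, set $\phi := (-\Delta+1)^{-1}\psi \in W^{2,q'}(\Gamma(s))$, insert the $L^2$-projection $P_h$, and use the definition of $\Delta_{\Gamma(s),h}$ to obtain
\begin{align*}
\bigl\langle \Delta u_h^\ell - \Delta_{\Gamma(s),h} u_h^\ell,\,\phi \bigr\rangle
= -(\nabla u_h^\ell, \nabla(\phi - P_h\phi))_{\Gamma(s)} - (\Delta_{\Gamma(s),h} u_h^\ell,\,\phi - P_h\phi)_{\Gamma(s)} .
\end{align*}
Standard $L^q$-projection error estimates give $|\langle\cdot\rangle| \le Ch\|\nabla u_h^\ell\|_{L^q} + Ch^2\|\Delta_{\Gamma(s),h} u_h^\ell\|_{L^q}$, and the inverse inequality (P2) reduces both summands to $\le C\|u_h^\ell\|_{L^q}$, producing the required sharp bound. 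Applying Young's inequality to $\|\nabla w_2\|_{L^q} \le C\|u_h^\ell\|_{L^q}^{1/2}\|\Delta_{\Gamma(s),h} u_h^\ell\|_{L^q}^{1/2}$, summing with the $w_1$-contribution, using Remark~\ref{rmk:delta_h} to pass back to $\Gamma_h(s)$, and absorbing the auxiliary $\epsilon\,\|\nabla_{\Gamma_h(s)} u_h\|_{L^q}$ term for $\epsilon\le\epsilon_0$ small (the case $\epsilon>\epsilon_0$ being trivial from Young's inequality) completes the argument.
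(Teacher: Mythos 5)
Your argument is correct and it follows the paper's overall plan: both you and the paper construct the continuous preimage $w$ (the paper calls it $u$) of $u_h^\ell$ under the Ritz projection $R_h$ by solving $(-\Delta_{\Gamma(s)}+1)w = (-\Delta_{\Gamma(s),h}+1)u_h^\ell$, then combine $L^q$-elliptic regularity, the Gagliardo--Nirenberg inequality on the closed manifold, $W^{1,q}$-stability of $R_h$, inverse inequalities, Remark~\ref{rmk:delta_h}, and the absorption of $\epsilon\|\nabla_{\Gamma_h(s)}u_h\|$. In both proofs the non-trivial step is the same sharp bound $\|w\|_{L^q(\Gamma(s))}\leq C\|u_h^\ell\|_{L^q(\Gamma(s))}$.

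Where you genuinely diverge from the paper is in how that $L^q$-bound is obtained. The paper runs a single duality argument directly on $w$: it writes
$(w,v)=((-\Delta_{\Gamma(s),h}+1)u_h^\ell,R_hA^{-1}v)+((-\Delta_{\Gamma(s),h}+1)u_h^\ell,(1-R_h)A^{-1}v)=(u_h^\ell,v)+((-\Delta_{\Gamma(s),h}+1)u_h^\ell,(1-R_h)A^{-1}v)$
and pairs the Ritz-projection $L^{q'}$-error $\|(1-R_h)A^{-1}v\|_{L^{q'}}\le Ch^2\|A^{-1}v\|_{W^{2,q'}}$ with the inverse bound $\|(-\Delta_{\Gamma(s),h}+1)u_h^\ell\|_{L^q}\le Ch^{-2}\|u_h^\ell\|_{L^q}$. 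You instead split $w=w_1+w_2$, dispose of $w_1$ at once, and re-express $w_2$ through the algebraic identity $w_2=u_h^\ell-A^{-1}u_h^\ell+A^{-1}(\Delta u_h^\ell-\Delta_{\Gamma(s),h}u_h^\ell)$, then estimate the Laplacian defect by duality inserting the $L^2$-projection $P_h$ (rather than $R_h$) and using $L^{q'}$- and $W^{1,q'}$-approximation errors of $P_h$; note that the term $(\Delta_{\Gamma(s),h}u_h^\ell,\phi-P_h\phi)$ in your display vanishes identically since $\Delta_{\Gamma(s),h}u_h^\ell\in S_h$, so it is harmless slack. The two routes need the same ingredients and yield the same constants; yours makes the discrete-versus-continuous Laplacian mismatch visible as the explicit quantity $\Delta u_h^\ell-\Delta_{\Gamma(s),h}u_h^\ell$, whereas the paper carries it implicitly in the Ritz-projection consistency error. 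The second small difference is cosmetic: you apply $W^{1,q}$-stability of $R_h$ directly to $\nabla R_h w$, while the paper triangle-splits $R_h u = (R_hu-I_hu)+(I_hu-u)+u$; both end at the same Gagliardo--Nirenberg bound.
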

\begin{proof}
	We define function $u$ to be the solution of the following elliptic equation on $\Gamma(s)$
	\begin{align}
		-(\Delta_{\Gamma(s)} - 1)u = -(\Delta_{\Gamma(s),h} - 1)u_h^\ell , \notag
	\end{align}
	or equivalently $R_h u = u_h^\ell$ where $R_h$ is the Ritz projection associated to the finite element space $S_h(\Gamma(s))$ the elliptic operator $-\Delta_{\Gamma(s)} + 1$ (see Section \ref{sec:Rh}).
	Then the elliptic regularity theory says
	\begin{align}\label{eq:interp_2}
		\| u \|_{W^{2,q}(\Gamma(s))} \leq C \| (\Delta_{\Gamma(s),h} - 1) u_h^\ell \|_{L^q(\Gamma(s))} .
	\end{align}
	Moreover from the inverse inequality
	\begin{align}\label{eq:interp_3}
		\| (\Delta_{\Gamma(s),h} - 1) u_h^\ell \|_{L^q(\Gamma(s))}
		&=
		\sup_{v_h\in S_h, \| v_h^\ell \|_{L^{q^\prime}(\Gamma(s))} = 1}
		(\nabla_{\Gamma(s)} u_h^\ell, \nabla_{\Gamma(s)} v_h^\ell)_{\Gamma(s)}
		+
		(u_h^\ell, v_h^\ell)_{\Gamma(s)}
		\notag\\
		&\leq
		C h^{-2} \| u_h^\ell \|_{L^q(\Gamma(s))} .
	\end{align}
	Define $A = -\Delta_{\Gamma(s)} + 1$ and we get
	\begin{align}
		&(u, v)_{\Gamma(s)} = (A u, A^{-1} v)_{\Gamma(s)} = ((-\Delta_{\Gamma(s),h} + 1) u_h^\ell, A^{-1} v)_{\Gamma(s)} \notag\\
		&=
		((-\Delta_{\Gamma(s),h} + 1) u_h^\ell, R_h A^{-1} v)_{\Gamma(s)}
		+
		((-\Delta_{\Gamma(s),h} + 1) u_h^\ell, (1 - R_h) A^{-1} v)_{\Gamma(s)} \notag\\
		&=
		((\nabla_{\Gamma(s),h} + 1) u_h^\ell, (\nabla_{\Gamma(s),h} + 1) A^{-1} v)_{\Gamma(s)}
		+
		((-\Delta_{\Gamma(s),h} + 1) u_h^\ell, (1 - R_h) A^{-1} v)_{\Gamma(s)} \notag\\
		&=
		(u_h^\ell, v)_{\Gamma}
		+
		((-\Delta_{\Gamma(s),h} + 1) u_h^\ell, (1 - R_h) A^{-1} v)_{\Gamma(s)} , \notag
	\end{align}
	and therefore 
	\begin{align}\label{eq:interp_4}
		&\| u \|_{L^q(\Gamma(s))}
		=
		\sup_{\| v \|_{L^{q^\prime}(\Gamma(s))} = 1}
		(u, v)_{\Gamma(s)} \notag\\
		&\leq
		\| u_h^\ell \|_{L^q(\Gamma(s))}
		+ \| (-\Delta_{\Gamma(s),h} + 1) u_h^\ell \|_{L^{q}(\Gamma(s))} \sup_{\| v \|_{L^{q^\prime}(\Gamma(s))} = 1} \| (1 - R_h) A^{-1} v \|_{L^{q^\prime}(\Gamma(s))}
		\notag\\
		&\leq
		\| u_h^\ell \|_{L^q(\Gamma(s))}
		+ C h^2 \| (-\Delta_{\Gamma(s),h} + 1) u_h^\ell \|_{L^{q}(\Gamma(s))} \sup_{\| v \|_{L^{q^\prime}(\Gamma(s))} = 1} \| A^{-1} v \|_{W^{2,q^\prime}(\Gamma(s))}
		\notag\\
		&\leq
		C \| u_h^\ell \|_{L^q(\Gamma(s))} ,
	\end{align}
	where in the last line we have used \eqref{eq:interp_3} and the elliptic regularity theory $\| A^{-1} v \|_{W^{2,q^\prime}(\Gamma(s))} \leq C \| v \|_{L^{q^\prime}(\Gamma(s))} = C$.
	
	Hence the norm equivalence, Remark \ref{rmk:delta_h}, \eqref{eq:interp_2}, \eqref{eq:interp_4} and the $W^{1,q}$-stability of $R_h$ (see Section \ref{sec:Rh}) imply
	\begin{align}
		\| \nabla_{\Gamma(s)} u_h^\ell \|_{L^{q}(\Gamma(s))}
		&\leq 
		\| \nabla_{\Gamma(s)} (R_h u - I_h u) \|_{L^{q}(\Gamma(s))}
		+
		\| \nabla_{\Gamma(s)} (I_h - 1) u \|_{L^{q}(\Gamma(s))}
		+
		\| \nabla_{\Gamma(s)} u \|_{L^{q}(\Gamma(s))}
		\notag\\
		&\leq 
		C h \| u \|_{W^{2,q}(\Gamma(s))}
		+
		\| u \|_{L^{q}(\Gamma(s))}^{1/2} \| u \|_{W^{2,q}(\Gamma(s))}^{1/2} \notag\\
		&\leq 
		C \epsilon^{-1} \| u_h \|_{L^{q}(\Gamma_h(s))}
		+
		C \epsilon \| \nabla_{\Gahs} u_h \|_{L^{q}(\Gamma_h(s))}
		+
		C \epsilon \| \Delta_{\Gamma_h(s),h} u_h \|_{L^q(\Gamma_h(s))} . \notag
	\end{align}
	We complete the proof by absorbing $\epsilon \| \nabla_{\Gahs} u_h \|_{L^{q}(\Gamma_h(s))}$ into the left-hand side.
\end{proof}

Applying the discrete maximal regularity on the stationary surface $\Gamma_h(s)$ with $p,q\in(1,\infty)$ (Theorem \ref{eq:scheme_stat}) to \eqref{eq:scheme_pb0} and using the norm equivalence between $\Gamma_h(s)$ and $\ls(s)$, we get
\begin{align}
	&\quad\| \partial_t u_h \|_{L^p(0,t; L^q(\Gamma_h(s)))} 
	+ 
	\| \Delta_{\Gamma_h(s),h} u_h \|_{L^p(0,t; L^{q}(\Gamma_h(s)))} \notag\\
	&\leq
	C \| f_h \|_{L^p(0,t; L^q(\ls(s)))}
	+
	C \| (\bar a_h(s,\cdot) - \bar a_h(t,\cdot)) \partial_t u_h(t,\cdot) \|_{L^p(0,t; L^q(\ls(s)))}
	\notag\\
	&\quad
	+ C \| \Delta_{\ls(s),h}^{B_t} u_h(t,\cdot) \|_{L^p(0,t; L^q(\ls(s)))} \notag\\
	&\leq
	C \| f_h \|_{L^p(0,t; L^q(\Gamma_h(s)))}
	+
	C |s-t|\| \partial_t u_h \|_{L^p(0,t; L^q(\Gamma_h(s)))} \notag\\
	&\quad
	+ C \| \nabla_{\Gamma_h(s)} u_h \|_{L^p(0,t; L^q(\Gamma_h(s)))} 
	+ C |s-t| \| \Delta_{\Gamma_h(s),h} u_h \|_{L^p(0,t; L^q(\Gamma_h(s)))} , \notag
\end{align}
where in the second inequality we have used Lemma \ref{lemma:delta_h}. Then we apply the norm equivalence, Remark \ref{rmk:delta_h} and Lemma \ref{lemma:interp} to the inequality above to change the domain from $\Gahs$ to $\Gah(0)$:
\begin{align}\label{eq:scheme_pb}
	&\quad\| \partial_t u_h \|_{L^p(0,t; L^q(\Gamma_h(0)))} 
	+ 
	\| \Delta_{\Gamma_h(s),h} u_h \|_{L^p(0,t; L^{q}(\Gamma_h(0)))} \notag\\
	&\leq
	C \| f_h \|_{L^p(0,t; L^q(\Gamma_h(0)))}
	+
	C \| u_h \|_{L^p(0,t; L^q(\Gamma_h(0)))}
	+
	C |s-t|\| \partial_t u_h \|_{L^p(0,t; L^q(\Gamma_h(0)))} \notag\\
	&\quad
	+ C |s-t| \| \Delta_{\Gamma_h(0),h} u_h \|_{L^p(0,t; L^q(\Gamma_h(0)))} .
\end{align}

We define (cf.~the proof of Theorem~3.1 and (4.22) in \cite{KL23})
\begin{align}
	L_{p,q}(t) := 
	\| \partial_t u_h \|_{L^p(0,t; L^q(\Gamma_h(0)))}^p
	+
	\| \Delta_{\Gamma_h(0),h} u_h \|_{L^p(0,t; L^q(\Gamma_h(0)))}^p \notag
\end{align}
with
\begin{align}
	\partial_t L_{p,q}(t) = 
	\| \partial_t u_h(t,\cdot) \|_{L^q(\Gamma_h(0)))}^p
	+
	\| \Delta_{\Gamma_h(0),h} u_h(t,\cdot) \|_{L^q(\Gamma_h(0)))}^p . \notag
\end{align}
Then \eqref{eq:scheme_pb} implies that
\begin{align}
	L_{p,q}(s)
	&=
	\| \partial_t u_h \|_{L^p(0,s; L^q(\Gamma_h(0)))}^p
	+
	\| \Delta_{\Gamma_h(0),h} u_h \|_{L^q(0,s; L^2(\Gamma_h(0)))}^p
	\notag\\
	&\leq
	C \| f_h \|_{L^p(0,s; L^q(\Gamma_h(0)))}^p
	+
	C \| u_h \|_{L^p(0,s; L^q(\Gamma_h(0)))}^p
	\notag\\
	&\quad+
	C \int_0^s (s-t)^p \big( \| \partial_t u_h(t,\cdot) \|_{L^q(\Gamma_h(0))}^p
	+
	\| \Delta_{\Gamma_h(0),h} u_h(t,\cdot) \|_{L^q(\Gamma_h(0))}^p \big) \d t
	\notag\\
	&\leq
	C \| f_h \|_{L^p(0,s; L^q(\Gamma_h(0)))}^p
	+
	C T^p \int_0^s L_{p,q}(t) \d t
	+
	C p T^{p-1} \int_0^s L_{p,q}(t) \d t , \notag
\end{align}
where we have used the following estimates in the last line (cf.~\cite[p. 10]{KL23})
\begin{align}
	&\| u_h \|_{L^p(0,s; L^q(\Gamma_h(0)))}^p
	\leq
	T^p \int_0^s L_{p,q}(t) \d t , \notag\\
	&
	\int_0^s (s-t)^p \big( \| \partial_t u_h(t,\cdot) \|_{L^q(\Gamma_h(0))}^p
	+
	\| \Delta_{\Gamma_h(0),h} u_h(t,\cdot) \|_{L^q(\Gamma_h(0))}^p \big) \d t
	\leq
	p T^{p-1} \int_0^s L_{p,q}(t) \d t  . \notag
\end{align}
Thus, by Gr\"onwall's inequality, it holds that
\begin{align}
	L_{p,q}(s)
	\leq
	C \| f_h \|_{L^p(0,s; L^q(\Gamma_h(0)))}^p  \quad\forall s\in [0,T] . \notag
\end{align}

Therefore, the proof of Theorem \ref{thm:disc_max_reg} is complete.

\section*{Acknowledgements}

The work of G. Bai was was partially supported by the Hong Kong Research Grants Council (Project No.~PolyU/15303022). 

The work of B.~Kov\'acs is funded by the Heisenberg Programme of the Deutsche Forschungsgemeinschaft (DFG, German Research Foundation) -- Project-ID 446431602, and by the DFG Research Unit FOR 3013 \textit{Vector- and tensor-valued surface PDEs} (BA2268/6–1).

The work of B. Li was partially supported by a fellowship award from the Research Grants Council of the Hong Kong Special Administrative Region, China (Project No.~PolyU/RFS2324-5S03).

\appendix

\section{Proof of Lemma \ref{LocEEst}}
\label{appendix:A}
\setcounter{equation}{0}

The following lemma is proved in \cite[Lemma A.1]{Li-MCOM-2019}, which can be easily extended to our current scenario.
\begin{lemma}\label{lemma:loc_est_2}
	Suppose that, for any $t \in (0,T)$, $\phi_h^\ell(t) \in S_h(\Gamma)$ satisfies
	\begin{align}
		&(\partial_t \phi_h^\ell, \chi_h^\ell)_\Gamma
		+
		(\nabla_\Gamma \phi_h^\ell, \nabla_\Gamma \chi_h^\ell)_\Gamma
		=
		0
		\quad\forall \chi_h^\ell \in S_h^0(\Gamma_j''), t\in (0, d_j^2) , \notag \\
		&(\partial_t \phi_h^\ell, \chi_h^\ell)_\Gamma
		+
		(\nabla_\Gamma \phi_h^\ell, \nabla_\Gamma \chi_h^\ell)_\Gamma
		=
		0
		\quad\forall \chi_h^\ell \in S_h^0(D_j''), t\in (d_j^2 / 4, 2 d_j^2) . \notag
	\end{align}
	Then we have
	\begin{align}
		&\vertiii{\partial_t \phi_h^\ell}_{Q_j}
		+
		d_j^{-1} \vertiii{\nabla_\Gamma \phi_h^\ell}_{1,Q_j}
		\notag\\
		&\leq
		(C h^{1/2} d_j^{-1/2} + C\epsilon^{-1} h d_j^{-1} + \epsilon)
		\big(\vertiii{\partial_t \phi_h^\ell}_{Q_j'}
		+
		d_j^{-1} \vertiii{\nabla_\Gamma \phi_h^\ell}_{1,Q_j'}\big)
		\notag\\
		&\quad+
		C \epsilon^{-1}
		\bigg(d_j^{-2}\vertiii{\phi_h^\ell}_{Q_j'}
		+
		d_j^{-1} \| \phi_h^\ell(0) \|_{\Gamma_j'}
		+
		\| \nabla_\Gamma \phi_h^\ell(0) \|_{1,\Gamma_j'}
		\bigg) , \notag
	\end{align}
	where the constant $C>0$ is independent of $h$, $j$ and $C_*$.
\end{lemma}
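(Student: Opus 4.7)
\medskip
\noindent\textbf{Proof proposal.} The plan is a Schatz--Wahlbin style discrete local energy argument, adapted to the surface finite element setting, testing the equation with $\chi_h^\ell = I_h(\omega^2 \partial_t\phi_h^\ell)$ for a carefully chosen space-time cutoff $\omega$. First, I would construct a smooth cutoff $\omega(t,x)$ on $[0,4d_j^2]\times\Gamma$ that equals $1$ on $Q_j$, vanishes outside $Q_j'$, and satisfies $|\partial_t^\beta \nabla_\Gamma^\alpha \omega| \leq C d_j^{-|\alpha|-2\beta}$. The cutoff is built in two pieces glued at $t \approx d_j^2/2$: for $t\leq d_j^2/2$ its spatial support lies in $\Gamma_j'$, so by the support half of property (P3), $\chi_h^\ell \in S_h^0(\Gamma_j'')$ is admissible in the first equation; for $t \in [d_j^2/4, 4d_j^2]$ its spatial support lies in $D_j'$, so $\chi_h^\ell \in S_h^0(D_j'')$ is admissible in the second equation.

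Second, substituting $\chi_h^\ell$ and adding and subtracting $\omega^2 \partial_t \phi_h^\ell$ produces the identity
\begin{align*}
\|\omega\,\partial_t\phi_h^\ell\|_\Gamma^2 + \tfrac{1}{2}\tfrac{d}{dt}\|\omega\,\nabla_\Gamma\phi_h^\ell\|_\Gamma^2 = \mathcal{R}_h + \int_\Gamma \omega\,\partial_t\omega\,|\nabla_\Gamma\phi_h^\ell|^2\,dx - 2\int_\Gamma \omega(\partial_t\phi_h^\ell)\,\nabla_\Gamma\omega\cdot\nabla_\Gamma\phi_h^\ell\,dx,
\end{align*}
where $\mathcal{R}_h := (\partial_t\phi_h^\ell,\eta_h)_\Gamma + (\nabla_\Gamma\phi_h^\ell,\nabla_\Gamma\eta_h)_\Gamma$ with $\eta_h := \omega^2\partial_t\phi_h^\ell - I_h(\omega^2\partial_t\phi_h^\ell)$ is the super-approximation residual. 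Integrating in $t$ over each slab and patching at $t \approx d_j^2/2$ yields both the $L^2_{t,x}$-control of $\omega\partial_t\phi_h^\ell$ and the supremum-in-time control of $\|\omega\,\nabla_\Gamma\phi_h^\ell\|_\Gamma$, which after multiplication by the $O(d_j^2)$-length of the slab deliver the left-hand side of the claimed bound. The initial-data terms $\|\phi_h^\ell(0)\|_{\Gamma_j'}$ and $\|\nabla_\Gamma\phi_h^\ell(0)\|_{\Gamma_j'}$ arise from evaluating the energy at $t=0$.

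To close the estimate I would bound the right-hand side. The super-approximation estimate in (P3) gives $\|\eta_h\|_{L^2} + h\|\nabla_\Gamma\eta_h\|_{L^2} \leq C h d_j^{-1}\|\partial_t\phi_h^\ell\|_{L^2(\Gamma_j''\cap\mathrm{supp}\,\omega)}$, so Cauchy--Schwarz, Young's inequality with a free parameter $\epsilon$, and time integration yield the prefactor $(C h^{1/2} d_j^{-1/2} + C\epsilon^{-1} h d_j^{-1} + \epsilon)$ multiplying the $Q_j'$-norms of $\partial_t\phi_h^\ell$ and $d_j^{-1}\nabla_\Gamma\phi_h^\ell$ on the right of the target inequality. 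The cross term with $\nabla_\Gamma\omega$ is absorbed into the left-hand side by Young. The term with $\partial_t\omega$, naively of unfavourable size $d_j^{-2}|\nabla_\Gamma\phi_h^\ell|^2$, is reduced to $\epsilon^{-1} d_j^{-2}\vertiii{\phi_h^\ell}_{Q_j'}$ (plus small absorbable pieces) by performing a second, auxiliary, test with $I_h(\omega^2\phi_h^\ell)$, which produces a lower-order trade via $\tfrac{d}{dt}\|\omega\phi_h^\ell\|^2 + \|\omega\nabla_\Gamma\phi_h^\ell\|^2 = \cdots$ and delivers the $d_j^{-2}\vertiii{\phi_h^\ell}_{Q_j'}$ term in the statement.

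The main obstacle will be keeping the self-referential terms genuinely small: the super-approximation prefactor $Ch^{1/2}d_j^{-1/2}+C\epsilon^{-1}hd_j^{-1}$ is controlled only under the standing assumption $d_j \geq C_* h$ with $C_*$ large (as will be fixed in Section~\ref{section:maximal regularity on stationary surfaces}), and the Young parameter $\epsilon$ must be chosen so that the coefficient of $\vertiii{\partial_t\phi_h^\ell}_{Q_j'}+d_j^{-1}\vertiii{\phi_h^\ell}_{1,Q_j'}$ on the right side of the identity is strictly less than $1$ modulo the explicit small factor in the statement. A secondary technicality is the boundary-in-time contribution produced by gluing the two spatial cutoffs at $t \approx d_j^2/2$, handled by choosing the time splits so that one cutoff vanishes where the other is active. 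The surface nature of $\Gamma$ affects only the lower-order commutator $[\nabla_\Gamma, \partial_t]$ and the curvature terms that arise from integration by parts, which are harmless since the leading energy structure is unchanged. The remainder of the argument reproduces the flat-domain proof in \cite[Lemma~A.1]{Li-MCOM-2019} essentially verbatim.
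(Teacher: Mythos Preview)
Your proposal is correct and follows the same approach as the paper: the paper does not give an independent proof but simply states that the result ``is proved in \cite[Lemma~A.1]{Li-MCOM-2019}, which can be easily extended to our current scenario,'' and the Schatz--Wahlbin style argument you outline (cutoff, test with $I_h(\omega^2\partial_t\phi_h^\ell)$, super-approximation residual, auxiliary test with $I_h(\omega^2\phi_h^\ell)$ to trade the gradient term for a zeroth-order one) is precisely that proof. One small remark: since $\Gamma$ here is stationary, the commutator $[\nabla_\Gamma,\partial_t]$ you flag as a surface-specific issue actually vanishes, so the adaptation is even more direct than you suggest.
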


It is easy the construct the space-time cut-off function $\tilde\omega$ which is constant one on $Q_j'$ and zero outside $Q_j''$. We then define $\tilde\phi = \tilde\omega \phi$ and $\tilde\phi_h^\ell \in S_h(\Gamma)$ to be the finite element solution of 
\begin{align}\label{eq:err_eq_2}
	(\partial_t \tilde\phi - a_h\partial_t \tilde\phi_h^\ell, \chi_h^\ell)_\Gamma
	+ (\nabla_\Gamma \tilde\phi - B_h\nabla_\Gamma \tilde\phi_h^\ell, \chi_h^\ell)_\Gamma = 0 \quad\forall \chi_h^\ell \in S_h(\Gamma),
\end{align}
with the initial condition $\tilde\phi_h^\ell(0) = \tilde\phi(0)  = 0$. Obviously, $\tilde \phi_h^\ell$ is zero outside $Q_j''$ as well.

Then,
\begin{align}
	&\quad (a_h \partial_t (I_h\tilde\phi - \tilde\phi_h^\ell), \chi_h^\ell)_\Gamma
	+ (B_h \nabla_\Gamma (I_h \tilde\phi - \tilde\phi_h^\ell), \chi_h^\ell)_\Gamma \notag\\
	&=
	(a_h \partial_t (I_h\tilde\phi - \tilde\phi), \chi_h^\ell)_\Gamma
	+ (B_h \nabla_\Gamma (I_h \tilde\phi - \tilde\phi), \chi_h^\ell)_\Gamma \notag\\
	&\quad+
	((a_h - 1)\partial_t \tilde\phi, \chi_h^\ell)_\Gamma
	+ ((B_h - 1)\nabla_\Gamma \tilde\phi, \chi_h^\ell)_\Gamma \quad\forall \chi_h^\ell\in S_h(\Gamma) . \notag
\end{align}
Testing the above equation with $\chi_h^\ell = I_h\tilde\phi - \tilde\phi_h^\ell \in S_h(\Gamma)$ and using Lemma \ref{lemma:geo_perturb}, temporal integration by parts and H\"older's inequality (cf.~\cite[p. 37]{Li-MCOM-2019}), we get
\begin{align}\label{eq:energy_est1}
	&\quad \| I_h \tilde\phi - \tilde\phi_h^\ell \|_{L^\infty L^2(\mathcal Q)}^2
	+ \|\nabla_\Gamma ( I_h \tilde\phi - \tilde\phi_h^\ell) \|_{L^2 L^2(\mathcal Q)}^2 \notag\\
	&\leq 
	\| I_h \tilde\phi - \tilde\phi_h^\ell \|_{L^2 L^2(\mathcal Q)} \|\partial_t ( I_h \tilde\phi - \tilde\phi) \|_{L^2 L^2(\mathcal Q)}
	+ \| \nabla_\Gamma (I_h \tilde\phi - \tilde\phi) \|_{L^2 L^2(\mathcal Q)} \|\nabla_\Gamma ( I_h \tilde\phi - \tilde\phi_h^\ell) \|_{L^2 L^2(\mathcal Q)} \notag\\
	&\quad
	+ C h^{k+1} \| \tilde\phi \|_{L^2 L^2(\mathcal Q)} \|\partial_t ( I_h \tilde\phi - \tilde\phi_h^\ell) \|_{L^2 L^2(\mathcal Q)}
	+ C h^{k+1} \| \nabla_\Gamma  \tilde\phi \|_{L^2 L^2(\mathcal Q)} \|\nabla_\Gamma ( I_h \tilde\phi - \tilde\phi_h^\ell) \|_{L^2 L^2(\mathcal Q)} ,
\end{align} 
and, similarly, testing the above equation with $\chi_h^\ell = \partial_t (I_h\tilde\phi - \tilde\phi_h^\ell) \in S_h(\Gamma)$ leads to
\begin{align}\label{eq:energy_est2}
	&\quad \| \partial_t (I_h \tilde\phi - \tilde\phi_h^\ell) \|_{L^2 L^2(\mathcal Q)}^2
	+ \|\nabla_\Gamma ( I_h \tilde\phi - \tilde\phi_h^\ell) \|_{L^\infty L^2(\mathcal Q)}^2 \notag\\
	&\leq 
	\| \partial_t (I_h \tilde\phi - \tilde\phi) \|_{L^2 L^2(\mathcal Q)} \|\partial_t ( I_h \tilde\phi - \tilde\phi_h^\ell) \|_{L^2 L^2(\mathcal Q)}
	+ \| \nabla_\Gamma \partial_t (I_h \tilde\phi - \tilde\phi) \|_{L^2 L^2(\mathcal Q)} \|\nabla_\Gamma ( I_h \tilde\phi - \tilde\phi_h^\ell) \|_{L^2 L^2(\mathcal Q)} \notag\\
	&\quad
	+ C h^{k+1} \| \partial_t \tilde\phi \|_{L^2 L^2(\mathcal Q)} \|\partial_t ( I_h \tilde\phi - \tilde\phi_h^\ell) \|_{L^2 L^2(\mathcal Q)}
	+ C h^{k+1} \| \nabla_\Gamma \partial_t \tilde\phi \|_{L^2 L^2(\mathcal Q)} \|\nabla_\Gamma ( I_h \tilde\phi - \tilde\phi_h^\ell) \|_{L^2 L^2(\mathcal Q)} .
\end{align}
Applying Young's inequality to \eqref{eq:energy_est1} and \eqref{eq:energy_est2}, we obtain
\begin{align}
	 &\quad \| \partial_t (I_h \tilde\phi - \tilde\phi_h^\ell) \|_{L^2 L^2(\mathcal Q)}
	 + \|\nabla_\Gamma ( I_h \tilde\phi - \tilde\phi_h^\ell) \|_{L^\infty L^2(\mathcal Q)} \notag\\
	 &\quad+ d_j^{-1} \| I_h \tilde\phi - \tilde\phi_h^\ell \|_{L^\infty L^2(\mathcal Q)}
	 + d_j^{-1} \|\nabla_\Gamma ( I_h \tilde\phi - \tilde\phi_h^\ell) \|_{L^2 L^2(\mathcal Q)} \notag\\
	 &\leq 
	 C \| \partial_t (I_h \tilde\phi - \tilde\phi) \|_{L^2 L^2(\mathcal Q)}
	 + C d_j \| \nabla_\Gamma \partial_t (I_h \tilde\phi - \tilde\phi) \|_{L^2 L^2(\mathcal Q)} \notag\\
	 &\quad
	 + C d_j^{-2} \| I_h \tilde\phi - \tilde\phi \|_{L^2 L^2(\mathcal Q)}
	 + C d_j^{-1} \| \nabla_\Gamma (I_h \tilde\phi - \tilde\phi) \|_{L^2 L^2(\mathcal Q)} \notag\\
	 &\quad
	 + C h^{k+1} \| \partial_t \tilde\phi \|_{L^2 L^2(\mathcal Q)}
	 + C d_j h^{k+1} \| \nabla_\Gamma \partial_t \tilde\phi \|_{L^2 L^2(\mathcal Q)} \notag\\
	 &\quad
	 + C d_j^{-2} h^{k+1} \| \tilde\phi \|_{L^2 L^2(\mathcal Q)}
	 + C d_j^{-1} h^{k+1} \| \nabla_\Gamma \tilde\phi \|_{L^2 L^2(\mathcal Q)} . \notag
\end{align}
Using the triangle inequality and H\"older's inequality in the temporal direction,
\begin{align}\label{eq:energy_est3}
	&\quad \| \partial_t (\tilde\phi - \tilde\phi_h^\ell) \|_{L^2 L^2(\mathcal Q)}
	+ d_j^{-1} \|\nabla_\Gamma (\tilde\phi - \tilde\phi_h^\ell) \|_{L^2 L^2(\mathcal Q)} \notag\\
	&\quad+ d_j^{-2} \| \tilde\phi - \tilde\phi_h^\ell \|_{L^2 L^2(\mathcal Q)}
	+ d_j^{-1} \|\nabla_\Gamma ( \tilde\phi - \tilde\phi_h^\ell) \|_{L^2 L^2(\mathcal Q)} \notag\\
	&\leq 
	C \| \partial_t (I_h \tilde\phi - \tilde\phi) \|_{L^2 L^2(\mathcal Q)}
	+ C d_j \| \nabla_\Gamma \partial_t (I_h \tilde\phi - \tilde\phi) \|_{L^2 L^2(\mathcal Q)} \notag\\
	&\quad
	+ C d_j^{-2} \| I_h \tilde\phi - \tilde\phi \|_{L^2 L^2(\mathcal Q)}
	+ C d_j^{-1} \| \nabla_\Gamma (I_h \tilde\phi - \tilde\phi) \|_{L^2 L^2(\mathcal Q)} \notag\\
	&\quad
	+ C h^{k+1} \| \partial_t \tilde\phi \|_{L^2 L^2(\mathcal Q)}
	+ C d_j h^{k+1} \| \nabla_\Gamma \partial_t \tilde\phi \|_{L^2 L^2(\mathcal Q)} \notag\\
	&\quad
	+ C d_j^{-2} h^{k+1} \| \tilde\phi \|_{L^2 L^2(\mathcal Q)}
	+ C d_j^{-1} h^{k+1} \| \nabla_\Gamma \tilde\phi \|_{L^2 L^2(\mathcal Q)} .
\end{align}

By subtracting \eqref{eq:err_eq_1} from \eqref{eq:err_eq_2} and using suitably arranged cut-off function, we know that for $\chi_h^\ell \in S_h^0(\Gamma_j''), t\in (0, d_j^2)$ and $\chi_h^\ell \in S_h^0(D_j''), t\in (d_j^2 / 4, 2 d_j^2)$
\begin{align}
	(a_h\partial_t(\tilde\phi_h^\ell - \phi_h^\ell),\chi_h^\ell)_\Gamma
	+(B_h\nabla_\Gamma (\tilde\phi_h^\ell - \phi_h^\ell),\nabla_\Gamma \chi_h^\ell)_\Gamma =0.  \notag
\end{align} 
Then we apply Lemma \ref{lemma:loc_est_2} to $\tilde\phi_h - \phi_h$ and get
\begin{align}\label{eq:energy_est4}
	&\vertiii{\partial_t (\tilde\phi_h^\ell - \phi_h^\ell)}_{Q_j}
	+
	d_j^{-1} \vertiii{\tilde\phi_h^\ell - \phi_h^\ell}_{1,Q_j}
	\notag\\
	&\leq
	(C h^{1/2} d_j^{-1/2} + C\epsilon^{-1} h d_j^{-1} + \epsilon)
	\bigg(\vertiii{\partial_t (\tilde\phi_h^\ell - \phi_h^\ell)}_{Q_j'}
	+
	d_j^{-1} \vertiii{\tilde\phi_h^\ell - \phi_h^\ell}_{1,Q_j'}\bigg)
	\notag\\
	&\quad+
	C \epsilon^{-1}
	\bigg(d_j^{-2}\vertiii{\tilde\phi_h^\ell - \phi_h^\ell}_{Q_j'}
	+
	d_j^{-1} \| \phi_h^\ell(0) \|_{Q_j'}
	+
	\| \phi_h^\ell(0) \|_{1,Q_j'}
	\bigg) \notag\\
	&\leq
	(C h^{1/2} d_j^{-1/2} + C\epsilon^{-1} h d_j^{-1} + \epsilon)
	\bigg(\vertiii{\partial_t (\tilde\phi - \phi_h^\ell)}_{Q_j'}
	+
	d_j^{-1} \vertiii{\tilde\phi - \phi_h^\ell}_{1,Q_j'}\bigg)
	\notag\\
	&\quad+
	C \epsilon^{-1}
	\bigg(d_j^{-2}\vertiii{\tilde\phi - \phi_h^\ell}_{Q_j'}
	+
	d_j^{-1} \| \phi_h^\ell(0) \|_{Q_j'}
	+
	\| \phi_h^\ell(0) \|_{1,Q_j'}
	\bigg) \notag\\
	&\quad+
	(C h^{1/2} d_j^{-1/2} + C\epsilon^{-1} h d_j^{-1} + \epsilon)
	\bigg(\vertiii{\partial_t (\tilde\phi_h^\ell - \tilde\phi)}_{Q_j'}
	+
	d_j^{-1} \vertiii{\tilde\phi_h^\ell - \tilde\phi}_{1,Q_j'}\bigg)
	\notag\\
	&\quad+
	C \epsilon^{-1}
	d_j^{-2}\vertiii{\tilde\phi_h^\ell - \tilde\phi}_{Q_j'} ,
\end{align}
where in the last inequality, we use the splitting $\tilde\phi - \phi_h^\ell = (\tilde\phi - \tilde\phi_h^\ell) + (\tilde\phi_h^\ell - \phi_h^\ell)$.

Finally, we complete the proof of Lemma \ref{LocEEst} by applying the triangle inequality to \eqref{eq:energy_est3} and \eqref{eq:energy_est4} and using the fact that both $\tilde\phi$ and $\tilde \phi_h^\ell$ are zero outside $Q_j''$.

\medskip

\def\baselinestretch{0.95}
{\small
\bibliographystyle{abbrv}
\bibliography{WMP}

\begin{thebibliography}{10}

\bibitem{ALL17}
G.~Akrivis, B.~Li, and C.~Lubich.
\newblock Combining maximal regularity and energy estimates for time
  discretizations of quasilinear parabolic equations.
\newblock {\em Math. Comp.}, 86(306):1527--1552, 2017.

\bibitem{AlphonseCaetanoDjurdjevacElliottS_2023}
A.~Alphonse, D.~Caetano, A.~Djurdjevac, and C.~M. Elliott.
\newblock Function spaces, time derivatives and compactness for evolving
  families of {B}anach spaces with applications to {PDE}s.
\newblock {\em J. Differ. Equ.}, 353:268--338, 2023.

\bibitem{AlphonseElliottStinner_2015}
A.~Alphonse, C.~M. Elliott, and B.~Stinner.
\newblock An abstract framework for parabolic {PDE}s on evolving spaces.
\newblock {\em Port. Math.}, 72(1):1--46, 2015.

\bibitem{Amann1995}
H.~Amann.
\newblock {\em {Linear and Quasilinear Parabolic Problems. {V}olume I:
  {A}bstract {L}inear {T}heory.}}
\newblock Birkh\"auser Boston Inc., Boston, MA, 1995.

\bibitem{AshyralyevPiskarevWeis2002}
A.~Ashyralyev, S.~Piskarev, and L.~Weis.
\newblock {On well-posedness of difference schemes for abstract parabolic
  equations in {$L_p([0, T ]; E)$} spaces}.
\newblock {\em Numer. Funct. Anal. Optim.}, 23:669--693, 2002.

\bibitem{Aubin82}
T.~Aubin.
\newblock {\em Nonlinear {A}nalysis on {M}anifolds. {M}onge--{A}mp\`{e}re
  {E}quations}, volume 252.
\newblock Springer Science \& Business Media, 1982.

\bibitem{BLK23}
G.~Bai, D.~Leykekhman, and B.~Li.
\newblock Weak maximum principle of finite element methods for parabolic
  equations in polygonal domains.
\newblock {\em preprint, arxiv.org/submit/5759483/view}, 2023.

\bibitem{BL22B}
G.~Bai and B.~Li.
\newblock {A new approach to the analysis of parametric finite element
  approximations to mean curvature flow}.
\newblock {\em Found. Comput. Math.}, doi: 10.1007/s10208023-09622-x, 2023.

\bibitem{ClementPruss1992}
P.~Cl\'ement and J.~Pr\"uss.
\newblock {Global existence for a semilinear prabolic {V}olterra equation}.
\newblock {\em Mathematische Zeitschrift}, 209(1):17--26, 1992.

\bibitem{Davies89}
E.~B. Davies.
\newblock {\em {H}eat {K}ernels and {S}pectral {T}heory}.
\newblock Number~92. Cambridge university press, 1989.

\bibitem{Davies97}
E.~B. Davies.
\newblock Non-{G}aussian aspects of heat kernel behaviour.
\newblock {\em J. Lond. Math. Soc.}, 55(1):105--125, 1997.

\bibitem{DeckelnickDziukElliott_acta}
K.~Deckelnick, G.~Dziuk, and C.~M. Elliott.
\newblock Computation of geometric partial differential equations and mean
  curvature flow.
\newblock {\em Acta Numer.}, 14:139--232, 2005.

\bibitem{Dem09}
A.~Demlow.
\newblock Higher-order finite element methods and pointwise error estimates for
  elliptic problems on surfaces.
\newblock {\em SIAM J. Numer. Anal.}, 47(2):805--827, 2009.

\bibitem{DziukElliott_ESFEM}
G.~Dziuk and C.~M. Elliott.
\newblock Finite elements on evolving surfaces.
\newblock {\em IMA J. Numer. Anal.}, 27(2):262--292, 2007.

\bibitem{Dziuk2013b}
G.~Dziuk and C.~M. Elliott.
\newblock Finite element methods for surface {{PDEs}}.
\newblock {\em Acta Numer.}, 22:289--396, 2013.

\bibitem{ElliottRanner_unified}
C.~M. Elliott and T.~Ranner.
\newblock A unified theory for continuous-in-time evolving finite element space
  approximations to partial differential equations in evolving domains.
\newblock {\em IMA J. Numer. Anal.}, 41(3):1696--1845, 2021.

\bibitem{Geissert2006}
M.~Geissert.
\newblock Discrete maximal {$L^p$} regularity for finite element operators.
\newblock {\em SIAM J. Numer. Anal.}, 44:677--698, 2006.

\bibitem{Geissert2007}
M.~Geissert.
\newblock Applications of discrete maximal {$L^p$} regularity for finite
  element operators.
\newblock {\em Numer. Math.}, 108:121--149, 2007.

\bibitem{Gia13}
M.~Giaquinta and L.~Martinazzi.
\newblock {\em {A}n {I}ntroduction to the {R}egularity {T}heory for {E}lliptic
  {S}ystems, {H}armonic {M}aps and {M}inimal {G}raphs}.
\newblock Springer Science \& Business Media, 2013.

\bibitem{Huisken1984}
G.~Huisken.
\newblock Flow by mean curvature of convex surfaces into spheres.
\newblock {\em J. Differ. Geom.}, 20(1):237--266, 1984.

\bibitem{Kem18}
T.~Kemmochi and N.~Saito.
\newblock Discrete maximal regularity and the finite element method for
  parabolic equations.
\newblock {\em Numer. Math.}, 138:905--937, 2018.

\bibitem{highorderESFEM}
B.~Kov\'{a}cs.
\newblock High-order evolving surface finite element method for parabolic
  problems on evolving surfaces.
\newblock {\em IMA J. Numer. Anal.}, 38(1):430--459, 2018.

\bibitem{KL23}
B.~Kov{\'a}cs and B.~Li.
\newblock Maximal regularity of backward difference time discretization for
  evolving surface {PDEs} and its application to nonlinear problems.
\newblock {\em IMA J. Numer. Anal.}, 43(4):1937--1969, 2023.

\bibitem{KLL16}
B.~Kov{\'a}cs, B.~Li, and C.~Lubich.
\newblock {A}-stable time discretizations preserve maximal parabolic
  regularity.
\newblock {\em SIAM J. Numer. Anal.}, 54(6):3600--3624, 2016.

\bibitem{KLL19}
B.~Kov{\'a}cs, B.~Li, and C.~Lubich.
\newblock {A convergent evolving finite element algorithm for mean curvature
  flow of closed surfaces}.
\newblock {\em Numer. Math.}, 143:797--853, 2019.

\bibitem{KLL18_focm}
P.~C. Kunstmann, B.~Li, and C.~Lubich.
\newblock {R}unge--{K}utta time discretization of nonlinear parabolic equations
  studied via discrete maximal parabolic regularity.
\newblock {\em Found. Comput. Math.}, 18:1109--1130, 2018.

\bibitem{KW04}
P.~C. Kunstmann and L.~Weis.
\newblock Maximal {$L^p$}-regularity for parabolic equations, {F}ourier
  multiplier theorems and functional calculus.
\newblock In {\em Functional analytic methods for evolution equations}, pages
  65--311. Springer, 2004.

\bibitem{Lady88}
O.~A. Ladyzhenskaia, V.~A. Solonnikov, and N.~N. Ural'tseva.
\newblock {\em {L}inear and {Q}uasi-linear {E}quations of {P}arabolic {T}ype},
  volume~23.
\newblock American Mathematical Soc., 1988.

\bibitem{Leykekhman2004}
D.~Leykekhman.
\newblock {Pointwise localized error estimates for parabolic finite element
  equations}.
\newblock {\em Numer. Math.}, 96:583--600, 2004.

\bibitem{leykekhman2017discrete}
D.~Leykekhman and B.~Vexler.
\newblock Discrete maximal parabolic regularity for galerkin finite element
  methods.
\newblock {\em Numer. Math.}, 135(3):923--952, 2017.

\bibitem{Li2015}
B.~Li.
\newblock Maximum-norm stability and maximal ${L}^p$ regularity of {FEM}s for
  parabolic equations with {L}ipschitz continuous coefficients.
\newblock {\em Numer. Math.}, 131:489--516, 2015.

\bibitem{Li-MCOM-2019}
B.~Li.
\newblock Analyticity, maximal regularity and maximum-norm stability of
  semi-discrete finite element solutions of parabolic equations in nonconvex
  polyhedra.
\newblock {\em Math. Comp.}, 88:1--44, 2019.

\bibitem{Li-IMA-2022}
B.~Li.
\newblock Maximal regularity of multistep fully discrete finite element methods
  for parabolic equations.
\newblock {\em IMA J. Numer. Anal.}, 42:1700--1734, 2022.

\bibitem{LiSun2015-regularity}
B.~Li and W.~Sun.
\newblock Regularity of the diffusion-dispersion tensor and error analysis of
  {FEM}s for a porous media flow.
\newblock {\em SIAM J. Numer. Anal.}, 53:1418--1437, 2015.

\bibitem{LiSun2017-MCOM}
B.~Li and W.~Sun.
\newblock Maximal {$L^p$} analysis of finite element solutions for parabolic
  equations with nonsmooth coefficients in convex polyhedra.
\newblock {\em Math. Comp.}, 86:1071--1102, 2017.

\bibitem{LiYang2015}
B.~Li and C.~Yang.
\newblock Uniform {BMO} estimate of parabolic equations and global
  wellposedness of the thermistor problem.
\newblock {\em Forum Math. Sigma}, 3(e26), 2015, DOI:10.1017/fms.2015.29.

\bibitem{Li12}
P.~Li.
\newblock {\em {G}eometric {A}nalysis}, volume 134.
\newblock Cambridge University Press, 2012.

\bibitem{LY86}
P.~Li and S.~T. Yau.
\newblock On the parabolic kernel of the {S}chr{\"o}dinger operator.
\newblock 1986.

\bibitem{Lunardi12}
A.~Lunardi.
\newblock {\em {A}nalytic {S}emigroups and {O}ptimal {R}egularity in
  {P}arabolic {P}roblems}.
\newblock Springer Science \& Business Media, 2012.

\bibitem{NitscheWheeler1982}
J.~A. Nitsche and M.~F. Wheeler.
\newblock {$L_\infty$}-boundedness of the finite element {G}alerkin operator
  for parabolic problems.
\newblock {\em Numer. Funct. Anal. Optim.}, 4:325--353, 1982.

\bibitem{Pruss01}
J.~Pr{\"u}ss and R.~Schnaubelt.
\newblock Solvability and maximal regularity of parabolic evolution equations
  with coefficients continuous in time.
\newblock {\em J. Math. Anal. Appl.}, 256(2):405--430, 2001.

\bibitem{SchatzThomeeWahlbin1980}
A.~H. Schatz, V.~Thom\'{e}e, and L.~B. Wahlbin.
\newblock {Maximum norm stability and error estimates in parabolic finite
  element equations}.
\newblock {\em Comm. Pure Appl. Math.}, 33:265--304, 1980.

\bibitem{SchatzThomeeWahlbin1998}
A.~H. Schatz, V.~Thom\'{e}e, and L.~B. Wahlbin.
\newblock {Stability, analyticity, and almost best approximation in maximum
  norm for parabolic finite element equations}.
\newblock {\em Comm. Pure Appl. Math.}, 51:1349--1385, 1998.

\bibitem{Schatz98}
A.~H. Schatz, V.~Thom{\'e}e, and L.~B. Wahlbin.
\newblock Stability, analyticity, and almost best approximation in maximum norm
  for parabolic finite element equations.
\newblock {\em Comm. Pure Appl. Math.}, 51(11-12):1349--1385, 1998.

\bibitem{Schatz95}
A.~H. Schatz and L.~B. Wahlbin.
\newblock Interior maximum-norm estimates for finite element methods. {II}.
\newblock {\em Math. Comp.}, 64(211):907--928, 1995.

\bibitem{Tho06}
V.~Thom{\'e}e.
\newblock {\em {G}alerkin {F}inite {E}lement {M}ethods for {P}arabolic
  {P}roblems}, volume~25.
\newblock Springer Science \& Business Media, 2007.

\bibitem{ThomeeWahlbin2000}
V.~Thom\'{e}e and L.~B. Wahlbin.
\newblock {Stability and analyticity in maximum-norm for simplicial Lagrange
  finite element semidiscretizations of parabolic equations with {D}irichlet
  boundary conditions}.
\newblock {\em Numer. Math.}, 87:373--389, 2000.

\bibitem{Tho00}
V.~Thom{\'e}e and L.~B. Wahlbin.
\newblock Stability and analyticity in maximum-norm for simplicial lagrange
  finite element semidiscretizations of parabolic equations with {D}irichlet
  boundary conditions.
\newblock {\em Numer. Math.}, 87:373--389, 2000.

\bibitem{Weis19}
L.~Weis.
\newblock A new approach to maximal {$L^p$}-regularity.
\newblock In {\em {E}volution {E}quations and {T}heir {A}pplications in
  {P}hysical and {L}ife {S}ciences}, pages 195--214. CRC Press, 2019.

\end{thebibliography}

}
\end{document}